\newtheorem{theorem}{Theorem}[section]
\newtheorem{lemma}{Lemma}[section]
\newtheorem{definition}{Definition}[section]
\numberwithin{equation}{section}
\newtheorem{proposition}{Proposition}[section]
\begin{document}

\begin{frontmatter}

\title{Coproducts for affine super-Yangian and Weyl groupoid action}


\author{Vladimir Stukopin}
\address{Moscow Institute of Physics and Technology, Center of Fundamental Mathematics,\\  South Mathematical Institute}

\author{Vasiliy Volkov}
\address{Moscow Institute of Physics and Technology, Center of Fundamental Mathematics}

\begin{abstract}
For affine special linear superalgebra  $\widehat{sl}(m|n, \Pi)$  defined by an arbitrary system of simple roots $\Pi$  we define the affine super Yangian $Y_{\hbar}(\widehat{sl}(m|n, \Pi))$ as Hopf superalgebra  which is a quantization of superbialgebra $\widehat{sl}(m|n, \Pi)[t]$ and describe super Yangian in terms of minimalistic system of generators. We consider Drinfeld presentation for $Y^D_{\hbar}(\widehat{sl}(m|n, \Pi))$ and prove that these two presentations are isomorphic as associative superalgebras. We induce by means of this isomorphism a co-multiplication on the Drinfeld presentation $Y^D_{\hbar}(\widehat{sl}(m|n, \Pi))$ of the super Yangian.  We introduce the action of Weyl groupoid by isomorphisms on super Yangians as an extension of its action on universal enveloping algebra and deformation of action on univesal enveloping superalgebra of current Lie superalgebra  and prove that such extension exists and unique. As a consequence of this construction we obtain that super Yangians   $Y_{\hbar}(\widehat{sl}(m|n, \Pi_1))$ and  $Y_{\hbar}(\widehat{sl}(m|n, \Pi_2))$, defined by different simple root systems $ \Pi_1$ and $ \Pi_2$ are isomorphic as Hopf superalgebras.

\end{abstract}






\end{frontmatter}

\tableofcontents

\section{Introduction}

We define affine super Yangian $Y_{\hbar}(\mathfrak{g}(A)) =Y_{\hbar}(\widehat{sl}(m|n, \Pi))$ for an affine special linear superalgebra $\mathfrak{g}(A) = \widehat{sl}(m|n, \Pi)$ for an arbitrary system of simple roots $\Pi$ or the corresponding Cartan matrix $A$ (strictly speaking, for quadruple $ (\mathfrak{h}, \mathfrak{h}^*, \Pi, \check{\Pi})$ which is the realization of Cartan matrix $A$, see \cite{Kac1} ) in terms of minimalistic system of generators. 
We consider Drinfeld presentation of $Y_{\hbar}(\mathfrak{g}(A)) = Y_{\hbar}(\widehat{sl}(m|n, \Pi))$ and the constructed isomorphism of these two presentations of affine Yangian $Y_{\hbar}(\hat{sl}(m|n, \Pi))$. We define the affine Weyl groupoid and its action on Yangians. We show that this action establishes isomorphisms between different realizations of Yangians, which are defined by distinct systems of simple roots.

The construction of the Yangian in the so-called Faddeev-Reshetikhin-Takhtadjan (FRT) presentation appeared in connection with the application of the algebraic Bethe ansatz to the study of quantum integrable models with a rational $R$-matrix before the appearance of the term "Yangian" itself. Introduced by Drinfeld, the Yangian is one of the most important examples of quantum groups due to its wide range of applications (\cite{Dr}). He defined the Yangian for a finite-dimensional simple Lie algebra $\mathfrak{g}$ in order to obtain rational solutions to the Yang-Baxter equation and explain their algebraic nature. The Yangian is a quantum group defined as a deformation of the current algebra $\mathfrak{g}[z]$. Drinfeld defined it by three different presentations (two of them were new). The first of these is called the first (or Drinfeld) presentation, and it is related to the quantization procedure. The other presentation is called the new Drinfeld presentation (\cite{Drinfeld}). Its generators are ${h_{i,r}, x^{\pm}{i,r} \mid r \in \mathbb{Z}{\geq 0}}$, where ${h_{i,0} = h_i, x^{\pm}_{i,0} = x^{\pm}_i}$ are the Chevalley generators of $\mathfrak{g}$. Drinfeld proved that these two presentations are isomorphic as associative algebras. The Yangian is a Hopf algebra by virtue of the first presentation and has a comultiplication.

The definition of the Yangian as an associative algebra naturally extends to the case where $\mathfrak{g}$ is a symmetrizable affine Kac–Moody Lie algebra, using the Drinfeld presentation \cite{Drinfeld}. This type of Yangian is called an affine Yangian and was first defined, apparently independently, by S. Boyarchenko and S. Levendorskii \cite{Levendorski} (see also the work of Guay \cite{Guaywork}, as well as the papers \cite{Guaywork1} and \cite{BT}).

Defining its quasi-Hopf algebra structure is more involved, but this problem has been resolved for the Yangian of affine Kac-Moody Lie algebras in the work of N. Guay and collaborators. 

In our view more natural to define the Yangian of the Kac-Moody algebra as a flat deformation of the current bialgebra, following Drinfeld's original approach. In this case, the Hopf algebra structure on the Yangian appears naturally. In this paper, we implement this approach in the case of the Yangian of the Kac-Moody superalgebra, considering the minimalistic presentation as an analogue of the first (Drinfeld) presentation of the Yangian, namely as a quantization of the current superbialgebra with values in the Kac-Moody superalgebra.

It should be noted that the importance of studying Yangians is largely determined by their numerous connections with other mathematical structures. 
It is known that Yangians are closely related to $W$-algebras. It has been shown that there exist surjective homomorphisms from Yangians of type $A$ to rectangular finite $W$-algebras of type $A$. The affine Yangian is related to infinite $W$-algebras, which play an important role in mathematical physics.
It is known that for the Lie superalgebra $sl(m|n)$, the Yangian can be defined through both the Drinfeld presentation (\cite{St}) and the so-called FRT presentation. The relationship between Yangians and $W$-algebras has also been studied in the case of finite Lie superalgebras by many authors: E. Ragoucy, P. Sorba (\cite{RS}),    C. Peng (\cite{Peng}), V. Serganova and E. Poletaeva (\cite{PS}). In the recent paper R. Gaberdiel, W. Li, C. Peng and H. Zhang \cite{SuSy} defined the Yangian $Y(\widehat{gl}(1|1))$ for the affine Lie superalgebra $\widehat{gl}(1|1)$. Ueda defined the affine super Yangian $Y(\widehat{sl}(m|n))$ for the distinguished simple root system. The relationship between the affine Yangian and the quantum toroidal algebra for the affine algebra $\widehat{\mathfrak{gl}}_1$ was studied in \cite{BT}.
Recently, quiver realizations of affine Yangians (\cite{GY}, \cite{GMT}), \cite{ShV}, \cite{ShV1},  going back to the fundamental work of \cite{MO}, have also begun to be considered.

It is important to note that a basic Lie superalgebra, in contrast to a simple Lie algebra, can be described by different Dynkin diagrams. This is explained by the fact that it admits different non-equivalent systems of simple roots (or, equivalently, has non-conjugate Borel subalgebras). We define the super Yangian $Y_{\hbar}(\widehat{sl}(m|n), \Pi)$ for an arbitrary simple root system $\Pi$ using a quantization procedure in terms of a so-called minimalistic presentation, which is related to the first Drinfeld presentation. We also define an action of the Weyl groupoid on affine super Yangians that are defined by arbitrary simple root systems. 

We prove that for any two different simple root systems $\Pi_1$ and $\Pi_2$ the corresponding affine super Yangians $Y_{\hbar}(\widehat{sl}(m|n,\Pi_1))$ and $Y_{\hbar}(\widehat{sl}(m|n,\Pi_2))$ are isomorphic as associative superalgebras (see also \cite{StV}) . 

Our proof is based on the construction of an affine Weyl groupoid action (see also \cite{StV}, \cite{Mazurenko}). We consider the Weyl groupoid generated by (super)reflections in the weight lattice relative to the simple roots. These (super)reflections induce the aforementioned isomorphisms.



However, an isomorphism in the category of Hopf superalgebras is not obvious. We prove that the action of an odd reflection on super Yangians defines an isomorphism of supercoalgebras, which allows us to obtain an isomorphism of Hopf superalgebras. In this context, we address an incorrect statement from the paper \cite{StV}, which claimed that super Yangians defined by different root systems may not be isomorphic as Hopf superalgebras.

  We also describe the Hopf superalgebra structures defined by different simple root systems. Here, we define the affine super Yangian in the minimalistic presentation as a Hopf superalgebra that constitutes a flat deformation of the universal enveloping superalgebra of the current Lie superbialgebra for the affine Kac-Moody superalgebra $\widehat{sl}(m|n, \Pi)$. 

We construct two presentations of the affine super Yangian: namely, the aforementioned minimalistic presentation and a (new) Drinfeld presentation. We note that for the ordinary Yangian, such a presentation was introduced by S. Levendorskii \cite{Lev}, while for super Yangians, the minimalistic presentation was developed in \cite{St}. 


We proved that the Drinfeld presentation of the affine super Yangian, denoted $Y^D_{\hbar}(\widehat{sl}(m|n), \Pi)$ is isomorphic as an associative superalgebra to the Yangian defined via the minimalistic presentation.

We use this isomorphism to define a coproduct on the Drinfeld-presented super Yangian $Y^D_{\hbar}(\widehat{sl}(m|n), \Pi)$.

Our research, as noted above, is based on the construction of the action of a Weyl groupoid on super Yangians (see also \cite{StV}, \cite{Mazurenko}), generated by superreflections of a weight lattice with relatively simple roots. Superreflections induce the above-mentioned isomorphisms.

We prove that if $w$ is an odd element of the Weyl groupoid and $w(\Pi_1) = \Pi_2$, then the standard coproducts on $Y^D_{\hbar}(\widehat{sl}(m|n), \Pi_1)$ and $Y^D_{\hbar}(\widehat{sl}(m|n), \Pi_2)$ are distinct, yet $Y^D_{\hbar}(\widehat{sl}(m|n), \Pi_1)$ and $Y^D_{\hbar}(\widehat{sl}(m|n), \Pi_2)$ remain isomorphic as counital cosuperalgebras.




We will use the following notation: $\Pi = \bar{\Pi} \cup {\alpha_0}$ for the affine simple root system, and $\bar{\Pi} = {\alpha_1, \ldots, \alpha_{m+n-1}}$ for the corresponding simple root system of the finite-dimensional basic Lie superalgebra, specifically for $\widehat{sl}(m|n)$ and $sl(m|n)$, respectively.

\section{Preliminaries}
\label{sec:headings}
\subsection{Basic Definitions}

\paragraph{Classical Lie superalgebras}

We will use the definitions of the Lie superalgebra and the Kac-Moody-Lie superalgebra (see \cite{Musson}).

We write $\bar{a} = p(a)$ if $a \in \mathfrak{g}_{\bar{a}}$.

An important class of Lie superalgebras consists of the classical Lie superalgebras. Let $\mathfrak{g}$ be a finite-dimensional Lie superalgebra such that $\mathfrak{g}_0$ is reductive and $\mathfrak{g}_1$ is a semisimple $\mathfrak{g}_0$-module. Such a Lie superalgebra is called a classical Lie superalgebra or a quasi-reductive Lie superalgebra. We also assume that $\mathfrak{g}$ possesses a nondegenerate supersymmetric invariant bilinear form (i.e. it is a basic Lie superalgebra). Let $\mathfrak{h}_0$ be a Cartan subalgebra of $\mathfrak{g}_0$. For $\alpha \in \mathfrak{h}_{0}^{\ast}$ set
\[
\mathfrak{g}^{\alpha}=
\begin{cases}
x \in \mathfrak{g}|[h,x]=\alpha(h)x\quad \forall h \in \mathfrak{h}_0\}
\end{cases}
\]
and let
\[
\Delta=
\begin{cases}
\alpha \in \mathfrak{h}_0^{\ast}|\alpha \neq 0,\mathfrak{g}^{\alpha} \neq 0\}
\end{cases}
\]
be the set of roots $\mathfrak{g}$. Since the action of $\mathfrak{h}_0$ on any finite-dimensional simple $\mathfrak{g}$-module is diagonalizable, we have the root space decomposition:
\begin{equation}
\label{eq:5}
\mathfrak{g}=\mathfrak{h}\oplus_{\alpha \in \Delta} \mathfrak{g}^{\alpha}.
\end{equation}

Let $\mathfrak{h} = \mathfrak{g}^0$ be the centralizer of $\mathfrak{h}_0$ in $\mathfrak{g}$ which is the Cartan subalgebra of $\mathfrak{g}$. For the classical Lie superalgebra with nondegenerate supersymmentic invariant bilimear form, we have $\mathfrak{h} = \mathfrak{h}_0$. We will use the notation $\bar{\alpha} := a$ if $\mathfrak{g}^{\alpha} \subset \mathfrak{g}_a$. The elements of the set $\Delta_i =\{\alpha \in \Delta| \bar{\alpha} =i \}$, $i = 0,1$ we will call even for $i=0$ and odd ($i=1$) roots. We also use the notation $\Delta_{ev}:= \Delta_0$, $\Delta_{od} := \Delta_1$. 
We recall the following well known
\begin{proposition}
 If $\mathfrak{g}$ is a classical simple Lie superalgebra and $\alpha,\beta,\alpha+\beta$ are roots of $\mathfrak{g}$, then $[\mathfrak{g}^{\alpha},\mathfrak{g}^{\beta}]=\mathfrak{g}^{\alpha+\beta}$. \\
\end{proposition}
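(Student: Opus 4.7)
The plan is to prove the easy inclusion directly from the Jacobi identity and the hard inclusion by a non-degenerate-form argument followed by a root-string representation-theoretic contradiction, with the isotropic odd case handled separately.

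First, I would verify that $[\mathfrak{g}^{\alpha},\mathfrak{g}^{\beta}] \subseteq \mathfrak{g}^{\alpha+\beta}$. For $x\in\mathfrak{g}^{\alpha}$, $y\in\mathfrak{g}^{\beta}$, $h\in\mathfrak{h}_{0}$, the graded Jacobi identity gives $[h,[x,y]] = [[h,x],y]+(-1)^{\bar{h}\bar{x}}[x,[h,y]] = (\alpha+\beta)(h)[x,y]$, using $\bar{h}=0$. So the commutator lies in the $(\alpha+\beta)$-root space.

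For the reverse inclusion, I would exploit the nondegenerate supersymmetric invariant bilinear form $B$ on $\mathfrak{g}$ that comes with a basic Lie superalgebra. A standard orthogonality argument shows $B(\mathfrak{g}^{\gamma},\mathfrak{g}^{\delta})=0$ unless $\gamma+\delta=0$, and the restriction $B\colon\mathfrak{g}^{\gamma}\times\mathfrak{g}^{-\gamma}\to\mathbb{C}$ is nondegenerate for every root $\gamma$. Suppose for contradiction that $V:=[\mathfrak{g}^{\alpha},\mathfrak{g}^{\beta}]$ is a proper subspace of $\mathfrak{g}^{\alpha+\beta}$. Then there exists a nonzero $w\in\mathfrak{g}^{-\alpha-\beta}$ with $B(w,V)=0$. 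Invariance of $B$ gives $B([w,x],y) = \pm B(w,[x,y])=0$ for all $x\in\mathfrak{g}^{\alpha}$, $y\in\mathfrak{g}^{\beta}$; since $[w,x]\in\mathfrak{g}^{-\beta}$ and $B$ pairs $\mathfrak{g}^{-\beta}$ nondegenerately with $\mathfrak{g}^{\beta}$, I conclude $[w,\mathfrak{g}^{\alpha}]=0$. Symmetrically, $[w,\mathfrak{g}^{\beta}]=0$.

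Next I would rule out such a $w$ via the $\alpha$-string through $-\beta$. Consider $M:=\bigoplus_{k\in\mathbb{Z}}\mathfrak{g}^{-\beta+k\alpha}$. When $(\alpha,\alpha)\neq 0$, the element $h_{\alpha}$ and suitable $e_{\alpha}\in\mathfrak{g}^{\alpha}$, $f_{\alpha}\in\mathfrak{g}^{-\alpha}$ span an $\mathfrak{sl}_{2}$ or $\mathfrak{osp}(1|2)$ subalgebra, with respect to which $M$ is a finite-dimensional module; the classical string lemma then forces $\mathfrak{g}^{-\beta-\alpha}$ to be generated from the highest weight by repeated action of $f_{\alpha}$, so $w\in\mathfrak{g}^{-\alpha-\beta}$ cannot simultaneously be nonzero and killed by $\mathrm{ad}\,\mathfrak{g}^{\alpha}$. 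A symmetric argument works if $(\beta,\beta)\neq 0$.

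The main obstacle is the isotropic odd case, where both $(\alpha,\alpha)=0$ and $(\beta,\beta)=0$; here one loses the $\mathfrak{sl}_{2}$-triple directly attached to $\alpha$. The approach I would take is to reduce to the non-isotropic case by noting that $(\alpha+\beta,\alpha+\beta)=2(\alpha,\beta)$; since $\alpha+\beta$ is a root, a basic-superalgebra classification argument shows $(\alpha,\beta)\neq 0$ in this situation, so $\gamma:=\alpha+\beta$ is non-isotropic and provides a workable $\mathfrak{sl}_{2}$-triple. Applying the previous string argument to the $\gamma$-string through $-\gamma$, together with the identity $[w,\mathfrak{g}^{\alpha}]=[w,\mathfrak{g}^{\beta}]=0$ derived above, produces the desired contradiction with $w\neq 0$. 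This completes both inclusions and hence the equality $[\mathfrak{g}^{\alpha},\mathfrak{g}^{\beta}]=\mathfrak{g}^{\alpha+\beta}$.
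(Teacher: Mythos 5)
The paper does not actually prove this proposition: it is recalled in the Preliminaries as a well-known fact, with no argument given, so there is nothing in the paper to compare your proof against. On its own merits, your strategy is the standard one and its first stages are sound: the inclusion $[\mathfrak{g}^{\alpha},\mathfrak{g}^{\beta}]\subseteq\mathfrak{g}^{\alpha+\beta}$ via the Jacobi identity, the orthogonality $B(\mathfrak{g}^{\gamma},\mathfrak{g}^{\delta})=0$ unless $\gamma+\delta=0$, and the deduction that a $w\in\mathfrak{g}^{-\alpha-\beta}$ orthogonal to $[\mathfrak{g}^{\alpha},\mathfrak{g}^{\beta}]$ satisfies $[w,\mathfrak{g}^{\alpha}]=[w,\mathfrak{g}^{\beta}]=0$ are all correct.

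The final step, however, has two genuine gaps. First, in the non-isotropic case the string lemma is overstated: what $\mathfrak{sl}_{2}$- (or $\mathfrak{osp}(1|2)$-) theory gives is only that a nonzero $w$ killed by $\operatorname{ad}\mathfrak{g}^{\alpha}$ is a highest-weight vector of $h_{\alpha}$-weight $-2-\langle\beta,\alpha^{\vee}\rangle\geq 0$; the module $\bigoplus_{k}\mathfrak{g}^{-\beta+k\alpha}$ need not be generated by its top weight space, so the phrase ``generated from the highest weight by repeated action of $f_{\alpha}$'' is not justified. You still have to exclude $\langle\beta,\alpha^{\vee}\rangle\leq -2$. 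That is immediate for $\mathfrak{sl}(m|n)$, where $|\langle\beta,\alpha^{\vee}\rangle|\leq 1$ whenever $\beta\neq\pm\alpha$ (and $\beta=\pm\alpha$ is impossible here since $\alpha+\beta$ is a root), which is all this paper needs; but for a general classical simple Lie superalgebra, e.g.\ orthosymplectic types, $\langle\beta,\alpha^{\vee}\rangle=-2$ with $\alpha+\beta$ a root does occur, and you would need a case analysis or an argument that also exploits $[w,\mathfrak{g}^{\beta}]=0$. Second, the isotropic case is circular as written: to run the string argument for $\gamma=\alpha+\beta$ you need $[w,\mathfrak{g}^{\gamma}]=0$, but what you have established is only $[w,[\mathfrak{g}^{\alpha},\mathfrak{g}^{\beta}]]=0$, and $[\mathfrak{g}^{\alpha},\mathfrak{g}^{\beta}]=\mathfrak{g}^{\gamma}$ is exactly the statement being proved. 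The gap is repairable: first show $V:=[\mathfrak{g}^{\alpha},\mathfrak{g}^{\beta}]\neq 0$ by applying the non-isotropic case to the pair $(-\gamma,\alpha)$, whose sum $-\beta$ is a root and whose first member is anisotropic; then choose $e\in V$ and $f\in\mathfrak{g}^{-\gamma}$ with $[e,f]=h_{\gamma}$, and observe that $w$ is killed by $e$ while having $h_{\gamma}$-weight $-2<0$, which forces $w=0$. But this additional step is essential and is absent from your write-up.
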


\paragraph{Special linear superalgebra}

If $\mathfrak{g}$ is a Lie superalgebra of type $sl(m|n)=A(m-1,n-1)$, then $\mathfrak{g}$ has a root decomposition (\ref{eq:5}), where $\mathfrak{h}$ is a set of diagonal matrices with zero supertrace and $\Delta = \Delta_{0}\cup \Delta_{1}\subseteq \mathfrak{h}^{\star}$ is a set of roots. Let $\varepsilon_i,\delta_j$ be the linear functionals on $h$ whose values on the diagonal matrix
\[
a = diag(a_1,\ldots,a_{m+n})
\]
are given by
\begin{equation}
\label{rel:4}
\varepsilon_i(a)=a_i, \quad \delta_j(a)=a_{m+j}, \quad 1\leq i \leq m, \quad 1\leq j \leq n.
\end{equation}
Then
\begin{equation}
\Delta_0=\{\varepsilon_i-\varepsilon_j;\delta_i-\delta_j\}_{i\neq j}, \quad \Delta_1=\{\pm(\varepsilon_i-\delta_j)\}.
\end{equation}
We define the Dynkin diagram as a graph with a set of nodes of two colors. Each white node corresponds to a root from the set $\Delta_0$, and each grey node corresponds to a root from the set $\Delta_1$. Two nodes are connected if their corresponding roots have a nonzero dot product.
Note that we can introduce the function of order on the system of simple roots $O(\Delta)\rightarrow \{1,\ldots, m+n-1\}$ by this definition, corresponding to the Dynkin diagram.

{\it Even roots} are the elements of the set $\Delta_0$, {\it odd roots} is the elements of the set $\Delta_1$.

 We note that (symmetric) Cartan matrix has elements defined as follows $a_{i,i}=0$ if root is odd, other matrix elements defined by the following formula:
\begin{equation}
    a_{ij}= 2\frac{(\alpha_i,\alpha_j)}{(\alpha_i,\alpha_i)}.
\end{equation}

We suppose, that, as above, $(\varepsilon_{i},\varepsilon_{j})=\delta_{ij}$, $(\delta_{i},\delta_{j})= - \delta_{ij}$, where $\delta_{ij}$ is Kronecker delta and $(\varepsilon_{i},\delta_{j})=0$. Every simple root as mentioned above is the difference between adjacent weights relative to a given order. Let $\bar{\Pi}$ be some set of simple roots.  Thus, permutations of weights induce the transform of a system of simple roots 
defined by order of weights. Note that Cartan matrix of Lie superalgebra $sl(m|n, \bar{\Pi})$ and it's affine analogue (corresponding to the affine simple root system $\Pi$) consists of the diagonal blocks:

\begin{equation}
   \begin{pmatrix}
     \pm 2 & \mp 1\\
    \mp 1 & \ldots
    \end{pmatrix}
\end{equation} for even roots,

\begin{equation}
   \begin{pmatrix}
      0 &  1\\
    1 & \ldots
    \end{pmatrix}
\end{equation} for odd roots.

With each Cartan matrix we can associate Dynkin graph constructed as follows. Each simple root corresponds to vertex colored in grey, if the root is odd and colored in white if the root is even. Two vertexes corresponding to roots $\alpha_{i}, \alpha_{i}$ connected by the edge if $a_{ij}\neq 0$. For distinguished realization of $sl(m|n)$ we have the following diagram:

\begin{equation} \label{d_0}
\begin{tikzpicture}[node distance=2cm]
\node [draw,shape=circle,label=below:$\alpha_1$] (1) {};
\node [draw,shape=circle,label=below:$\alpha_2$] (2) [right of=1] {};
\node [draw,shape=circle,label=below:$\alpha_{m-1}$] (3) [right of=2] {};
\node [draw,shape=circle,fill=gray,label=below:$\alpha_m$] (4) [right of=3] {};
\node [draw,shape=circle,label=below:$\alpha_{m+1}$] (5) [right of=4] {};
\node [draw,shape=circle,label=below:$\alpha_{m+n-1}$] (6) [right of=5] {};
\draw
(1) -- (2)
(2) edge [densely dashed] node {} (3)
(3) -- (4)
(4) -- (5)
(5) edge [densely dashed] node {} (6);
\end{tikzpicture}
\end{equation}

For distinguished realization of $sl^{(1)}(m|n) = \hat{sl}(m|n)$ we have the following Dynkin diagram

\begin{equation}  \label{d_a}
\begin{tikzpicture}[node distance=2cm]
\node [draw,shape=circle,label=below:$\alpha_1$] (1) {};
\node [draw,shape=circle,label=below:$\alpha_2$] (2) [right of=1] {};
\node [draw,shape=circle,label=below:$\alpha_{m-1}$] (3) [right of=2] {};
\node [draw,shape=circle,fill=gray,label=below:$\alpha_m$] (4) [right of=3] {};
\node [draw,shape=circle,label=below:$\alpha_{m+1}$] (5) [right of=4] {};
\node [draw,shape=circle,label=below:$\alpha_{m+n-1}$] (6) [right of=5] {};
\node [draw,shape=circle,fill=gray,label=below:$\alpha_0$] (7) [below of=3] {};
\draw
(1) -- (2)
(2) edge [densely dashed] node {} (3)
(3) -- (4)
(4) -- (5)
(1) -- (7)
(6) -- (7)
(5) edge [densely dashed] node {} (6);
\end{tikzpicture}
\end{equation}

To each Dynkin diagram corresponding to $\mathfrak{sl}(m|n)$, we can associate a tuple of functionals. The functionals in this tuple are arranged in pairs according to the order of the roots $\alpha$. For example, with the distinguished root system, we associate the tuple ${\varepsilon_1, \varepsilon_2, \ldots, \varepsilon_m, \delta_1, \delta_2, \ldots, \delta_n}$.

We note that a nondegenerate invariant bilinear form on the Cartan subalgebra $\mathfrak{h}$ defines (pseudo-)Euclidean structures on both $\mathfrak{h}$ and its dual $E = \mathfrak{h}^*$.
For a nonzero element $\alpha \in \mathfrak{h}^*$, we define the coroot as $\alpha^{\vee} = 2\alpha / (\alpha, \alpha)$. Additionally, let $\alpha^{\vee} \in \mathfrak{h}$ be the coroot dual to $\alpha$. The hyperplane orthogonal to $\alpha$ is defined as the set of vectors perpendicular to $\alpha$ with respect to the given bilinear form is
\[
\mathbb{H}_{\alpha}=\{\lambda \in E|(\lambda,\alpha)=0\}
\]
and we define the {\it reflection} $s_{\alpha}$ in $\mathbb{H}_{\alpha}$ by
\[
s_{\alpha}(\lambda)=\lambda-(\lambda,\alpha_{\vee})\alpha.
\]
It is easy to see that this definition is equivalent to the following:

\[
s_{\alpha}(\lambda)=\lambda- \langle \lambda,\alpha^{\vee} \rangle\alpha.
\]

\begin{definition}
Suppose that $R$ is a root system generated by a simple root system $\Pi$. The Weyl group is the group generated by all even reflections $s_{\alpha}$, where $\alpha \in \Pi_{\text{ev}} := \Pi \cap \Delta_{\text{ev}}$.
\end{definition}
Note that the inner product of any root of the set $\Delta_1$ equals zero: $(\alpha_i, \alpha_i) = 0$ for all $\alpha_i \in \Delta_1$. Nevertheless, we can define a reflection $s_{\alpha_i}$ for odd simple roots:
\begin{equation}
\label{rel:5}
    s_{\alpha_i}(\lambda)= \lambda + \alpha_i \quad if \quad \alpha+\lambda\quad  \text{is a root}, \quad
    s_{\alpha_i}(\alpha_i)=-\alpha_i,  \quad 
    s_{\alpha_i}(\lambda)=\lambda, \quad otherwise.
\end{equation}
The set $s_{\alpha_i}(\Pi)=\{s_{\alpha_i}(\lambda)|\lambda \in \Pi\}$ where $\Pi$ is a basis of simple roots for $\mathfrak{g}$.
Note that reflections have a groupoid structure on them, thus we can define {\it Weyl groupoid} $W$ as a groupid generated by all reflections $s_{\alpha_i}$ for $\alpha_i \in \Delta$. The action of elements of the Weyl groupoid generated by odd roots transforms the system of simple roots $\Pi$ of $\Delta^+$ into a new set of simple roots $\Pi_1$ of positive roos $\Delta^{+}_1$.

The action of the Weyl groupoid on the Cartan subalgebra can be extended to an action on Lie superalgebras, given by $L_s: \mathfrak{g}(A) \rightarrow \mathfrak{g}(A_1)$ for $s \in W$. For even $\alpha_i$, we have the following formula:

$L_{i} a = \bar{s}_i a \bar{s}_i^{-1}$, where $\bar{s}_i = \exp(x^-_i)\exp(-x^+_i)\exp(x^-_i)$, $a \in  \mathfrak{g}(A)$.  For odd (gray) $\alpha_i$ we have

$L_{i}(x^{\pm}_{\alpha_i}) =-x^{\pm}_{s_(\alpha_i)}$, $L_{i}(x^{\pm}_{\alpha_{i\pm 1}}) = \pm [x^{\pm}_{s_i(\alpha_i)},  x^{\pm}_{s_i(\alpha_{i\pm 1})}]$ and $L_{i} =id$ in other cases. Here $s_i = s_{\alpha_i}$. We also define the action of the Weyl groupoid on current superalgebras $\mathfrak{g}(A)\otimes_k k[t]$ and their universal enveloping algebras by the same formulas; specifically, we set $\tilde{L}_{i}(a\otimes t^k) = L_i(a)\otimes t^k$, or equivalently, $\tilde{L}_i = L_i \otimes \text{id}$.

This action can be extended to an isomorphism of universal enveloping algebras, $L_i: U(\mathfrak{g}(A)) \rightarrow U(\mathfrak{g}(A_1))$ and $\tilde{L}_i: U(\mathfrak{g}(A) \otimes k[t]) \rightarrow U(\mathfrak{g}(A_1) \otimes_k k[t])$, which are denoted by the same letter.

\vspace{0.2cm}

We further need the description of the basic Lie superalgebra $\mathfrak{sl}(m|n)$ as a contragredient Lie superalgebra. Let $\bar{\Pi} = { \alpha_1, \ldots, \alpha_n }$ be a simple root system corresponding to the aforementioned Dynkin diagram (\ref{d_0}), and let $A$ be the corresponding Cartan matrix.


Then $sl(m|n,\bar{\Pi})$ is isomorphic to the Lie superalgeba over $\mathbb{C}$ defined by generators $\{x_{i}^{\pm}, h_{i} | 1\leq i \leq m+n-1 \}$ (or $\{x_{\alpha_i}^{\pm}, h_{\alpha_i} | \alpha_i  \in \bar{\Pi}\}$) and by the relations
\begin{equation}
\label{in:6}
[h_{i},h_{j}]=0,\quad    [h_i,x_{j}^{\pm}]=\pm a_{i,j}x_{j}^{\pm},\quad
[x_i^{+},x_j^{-}]=\delta_{i,j}h_i,\quad ad(x_{j}^{\pm})^{1+|a_{i,j}|}(x_{i}^{\pm}) = 0, \quad
\end{equation}
\begin{equation}
\label{in:7}
[x_{t}^{\pm},x_{t}^{\pm}]=0, \quad [[x_{t-1}^{\pm},x_{t}^{\pm}],[x_{t+1}^{\pm},x_{t}^{\pm}]]=0, \quad \alpha_t \in \bar{\Pi}_{od}.
\end{equation}

\subsection{Affine Kac-Moody superalgebras}

Let $\Pi = { \alpha_0, \alpha_1, \ldots, \alpha_{m+n-1} }$ be a simple root system corresponding to the aforementioned Dynkin diagram (\ref{d_a}), let $A$ be the corresponding Cartan matrix, and let $\Pi_{\text{od}} \subset \Pi$ be the set of odd simple roots.

Let  $\mathfrak{sl}^{(1)}(m|n, \Pi)$ be a Kac-Moody Lie superalgebra defined by generators ${x_i^{\pm},h_i \mid 0 \leq i \leq m+n-1}$ and the following relations:

\begin{equation}
\label{eq:2}
[h_i,h_j]=0, \quad [h_i,x_j^{\pm}]=\pm a_{i,j}x_j^{\pm}, \quad [x_i^+,x_j^-]=\delta_{i,j}h_i, \quad
\end{equation}
\begin{equation}\label{eq:20}
ad(x_i^{\pm})^{1+|a_{i,j}|}x_j^{\pm}=0,
\end{equation}
\begin{equation}
\label{eq:3}
[x_t^{\pm}, x_t^{\pm}] = 0, \quad t \in \Pi_{od}, 
\end{equation}
\begin{equation}
\label{eq:4}
[[x_{t-1}^{\pm},x_{t}^{\pm}],[x_{t}^{\pm},x_{t+1}^{\pm}]] = 0 \quad t \in \Pi_{od}.  
\end{equation}
Where the generators $x_t^{\pm}$ (  $ \alpha_t \in \Pi_{od}$)
are odd and all other generators are even.

Let $\widehat{sl}(m|n, \Pi)$ denote the loop realization of $sl^{(1)}(m|n, \Pi)$, and write the equality $\widehat{sl}(m|n, \Pi) = sl^{(1)}(m|n, \Pi)$ to signify an isomorphism rather than a literal equality. In other words, we refer to the loop presentation of the affine Kac–Moody superalgebra and the isomorphism between $\widehat{sl}(m|n, \Pi)$ and $sl^{(1)}(m|n, \Pi)$.

We note that the above definition of the Weyl groupoid and its action on the universal enveloping superalgebra of current-valued Lie superalgebras naturally extend to the case of an affine basic Lie superalgebra. Consequently, we obtain an action of the affine Weyl groupoid on the current superalgebra with coefficients in the affine Kac–Moody superalgebra.

\vspace{0.3cm}

\subsection{Current Lie superalgebras and superbialgebras}

Let $\mathfrak{g}(A)$ be a Kac-Moody Lie superalgebra (affine or finite-dimensional basic), and let $\mathfrak{g}(A)[u]$ be the corresponding current Lie superalgebra with the pointwise bracket $a, b := [a(u), b(u)]$ for $a, b \in \mathfrak{g}(A)[u]$. Let $\Omega$ be a Casimir operator for the basic Lie superalgebra or a formal expression for the generalized Casimir operator in the case of an affine Kac-Moody Lie superalgebra.
\begin{equation} \label{eq:Cas1}
 \Omega = \sum_{k=1}^{\dim(\mathfrak{h})} h^{(k)}\otimes h_{(k)} + \sum_{\alpha \in \Delta_+}\sum_{k=1}^{\dim(\mathfrak{g}_{\alpha})} ( x^{k}_{-\alpha} \otimes  x^{k}_{\alpha}  +  x^{k}_{\alpha} \otimes  x^{k}_{-\alpha}).
\end{equation}

This formal expression can be rewritten in the following form, which makes sense for highest weight representations of Kac-Moody Lie superalgebras.

\begin{equation} \label{eq:Cas11}
 \Omega = 2\nu^{-1}(\rho) + \sum_{k=1}^{\dim(\mathfrak{h})} h^{(k)}\otimes h_{(k)} + 2 \sum_{\alpha \in \Delta_+}\sum_{k=1}^{\dim(\mathfrak{g}_{\alpha})} ( x^{k}_{-\alpha} \otimes  x^{k}_{\alpha}),
\end{equation}
where $\nu : \mathfrak{h} \rightarrow \mathfrak{h}^*$ be isomorphism defined by nondegenerate invariant bilinear form and $\rho = \rho_0 - \rho_1 \in \mathfrak{h}$
be such that  $\langle \rho , h_i \rangle = (-1)^{\bar{\alpha_i}}$  for $\alpha_i \in  \Pi$.

Suppose 
\begin{equation} \label{eq:Cas2}
 \Omega_+ = \sum_{k=1}^{\dim(\mathfrak{h})} h^{(k)}\otimes h_{(k)} + \sum_{\alpha \in \Delta_+} \sum_{k=1}^{\dim(\mathfrak{g}_{\alpha})}( x^{k}_{-\alpha} \otimes  x^{k}_{\alpha} )
\end{equation}
be so called a half Casimir operator. Here, $\mathfrak{h}$ is a Cartan subalgebra; $\{x^{k}_{\alpha}\}$ is the basis of the root subspace $\mathfrak{g}(A)_{\alpha}$; and ${h^{(k)}}$, ${h_{(k)}}$ are dual bases of the Cartan subalgebra $\mathfrak{h}$. The operators $\Omega$ and $\Omega_+$ are well-defined as elements of the completed tensor product $\mathfrak{g}(A) \mathbin{\hat{\otimes}} \mathfrak{g}(A)$, which is the completion of $\mathfrak{g}(A) \otimes \mathfrak{g}(A)$ with respect to the topology defined by the grading on $\mathfrak{g}(A)$. Alternatively, they can be defined as morphisms of modules $V_1 \otimes V_2$ in the category $\mathfrak{O}$ of modules over $\mathfrak{g}(A)$.

We define a Lie superbialgebra structure on $\mathfrak{g}(A)$ by introducing the cobracket $\varphi : \mathfrak{g}(A) \rightarrow \mathfrak{g}(A) \mathbin{\hat{\otimes}} \mathfrak{g}(A)$ through the formula:

\begin{equation}
\varphi (a)(u,v) : = [a(u) \otimes 1 + 1 \otimes a(v), \dfrac{\Omega}{u - v}].
\end{equation}

\vspace{0.2cm}

\section{Affine super Yangian}


We recall the definition of flat deformation of the Lie superbialgebra. Let  $\mathbf{k}$ be an algebraically closed field of characteristic $0$.  Hopf superalgebra $\mathbb{A}$ (over $\mathbf{k}[[\hbar]]$) is a flat deformation of the Lie superbialgebra ($\mathfrak{a}, \varphi$) (over field $\mathbf{k}$ ) if \\

i) $\mathbb{A}$ is a topologically free $\mathbf{k}[[\hbar]]$-module, \\

ii) $\mathbb{A}/\hbar \mathbb{A}$ is isomorphic  universal enveloping  superalgebra $U(\mathfrak{a})$, \\

iii) for any $x_0 \in \mathfrak{g}$ and $x \in A$ equals $x_0 \mod \hbar$ one has so called correspondence principle
\begin{equation} \label{eq:cr}
\varphi(x_0)  = \hbar^{-1} (\Delta (x) - \Delta^{op}(x)) \mod \hbar,
\end{equation}
where $\Delta^{op} = \sigma \circ \Delta$, $\sigma (a \otimes b) = (-1)^{\bar{a}\bar{b}} b\otimes a$.

Let $A$ be a Cartan matrix of affine contragedient Lie superalgebra $\mathfrak{g}(A) = sl^{(1)}(m|n, \Pi)$. Now we define the affine super Yangian $Y_{\hbar}(\mathfrak{g}(A))$ as a flat deformation of the Lie superbialgebra $(\mathfrak{g}(u), \varphi)$. 

We will use the notation $\{a,b\} := ab + (-1)^{\bar{a}\bar{b}}ba$ for the antisupercommutator. 

\begin{definition}
Let  $Y_{\hbar}(\mathfrak{g}(A)) = Y_{\hbar}(\widehat{sl}(m|n, \Pi)) $ is a Hopf superalgebra  generated as an associative superalgebra by $x_{i,r}^{\pm} = x_{\alpha_{i},r}^{\pm}$, $h_{i,r} = h_{\alpha_{i},r}$, for $i \in  I = \{0, 1,\ldots, m+n-1 \}$, $\tilde{h}_{\alpha_i, 1} := h_{\alpha_i,1} - \frac{1}{2}h_{\alpha_i,0}^2$  ($\alpha_i \in \Pi$) and $r \in \{0,1 \}$, subject to the relations:
\begin{equation}
\label{rel:35}
    [h_{\alpha_{i},r},h_{\alpha_{j},s}]=0,
\end{equation}
\begin{equation}
\label{rel:36}
    [x_{\alpha_{i},0}^{+},x_{\alpha_{j},0}^{-}]=\delta_{ij}h_{\alpha_i,0},
\end{equation}
\begin{equation}
\label{rel:37}
    [x_{\alpha_{i},1}^{+},x_{\alpha_{j},0}^{-}]=\delta_{ij}h_{\alpha_{i},1}=[x_{\alpha_{i},0}^{+},x_{\alpha_{j},1}^{-}],
\end{equation}
\begin{equation}
\label{rel:38}
    [h_{\alpha_{i},0},x_{\alpha_j,r}^{\pm}]=\pm a_{ij}x_{\alpha_j,r}^{\pm},
\end{equation}
\begin{equation}
\label{rel:39}
    [x_{\alpha_{i},1}^{\pm},x_{\alpha_{j},0}^{\pm}]-[x_{\alpha_{i},0}^{\pm},x_{\alpha_{j},1}^{\pm}]=\pm \frac{ \hbar a_{ij}}{2}\{x_{\alpha_{i},0}^{\pm},x_{\alpha_{j},0}^{\pm}\},
\end{equation}
\begin{equation}
\label{rel:40}
    [\tilde{h}_{\alpha_i,1},x_{\alpha_j,0}^{\pm}]=\pm a_{ij}x_{\alpha_j,1}^{\pm},
\end{equation}
\begin{equation}
\label{rel:41}
    (\text{ad} x_{\alpha_i,0}^{\pm})^{(1+|a_{ij}|)}(x_{\alpha_j,0}^{\pm}) = 0\quad (i \neq j),
\end{equation}
\begin{equation}
\label{rel:42}
    [x_{\alpha_i,0}^{\pm},x_{\alpha_i,0}^{\pm}]=0 ],\quad \text{for every odd root $\alpha_i$},
\end{equation}
\begin{equation}
\label{rel:43}
    [[x_{\alpha_{i-1},0}^{\pm},x_{\alpha_i,0}^{\pm}], [x_{\alpha_i,0}^{\pm},x_{\alpha_{i+1},0}^{\pm}]]=0, \quad \text{for every odd root $\alpha_i$}.
\end{equation}

Coproduct is defined by formulas

\begin{eqnarray}
&\Delta(a_{i,0}) = a_{i,0} \otimes 1 + 1 \otimes a_{i,0}, \quad a_{i,0} \in \{h_{i,0}, x^{\pm}_{i,0}:  i \in I  \},  \quad \\
& \Delta(h_{i, 1}) = h_{i,1} \otimes 1 + 1 \otimes h_{i,1}  + \hbar  [h_{i,0} \otimes 1, \Omega_+], \in I, \quad\\
& \Delta(x^{\pm}_{i,1}) =  \pm a_{i+1, i}^{-1} [\Delta(h_{i+1, i}), \Delta (x^{\pm}_{i,0})], i \in I, \quad (0 =m+n). \quad
\end{eqnarray}
\end{definition}

It is easy to check that $Y_{\hbar}(\widehat{sl}(m|n, \Pi)) $ is the flat deformation of the Lie superbialgebra $(\widehat{sl}(m|n,\Pi)[u], \varphi)$. Actually, we only need to check the correspondence principle. (\ref{eq:cr})  for generators $h_{i,1},  x^{\pm}_{i,1},  i \in I$. It is sufficiently to check correspondence principle in tensor product of two arbitrary highest weight modules, because Casimir operator correctly defined in such modules. We mean that the following relations we check as for operators acting in such modules.  We obtain 

\begin{equation} \label{eq:cr1}
 \hbar^{-1}(\Delta(h_{i,1}) - \Delta^{op}(h_{i,1})) =   [h_{i,0} \otimes 1, \Omega_+] -  [1\otimes h_{i,0}, \sigma (\Omega_+)] =  [h_{i,0} \otimes 1, \Omega]. 
\end{equation}

But we have

 $$  \varphi(h_i u) = [ h_i u \otimes 1 + 1 \otimes h_i v, \dfrac{\Omega}{u - v}  ] = [h_i \otimes 1 (u-v),    \dfrac{\Omega}{u - v}]  = [h_i \otimes 1, \Omega].  $$

So, we obtain that
$$\varphi(h_i \cdot u)  = \hbar^{-1} (\Delta (h_{i,1}) - \Delta^{op}(h_{i,1})) \mod \hbar,$$
coincides with (\ref{eq:cr}).

The fulfillment of the correspondence principle (\ref{eq:cr}) for the other generators $x^{\pm}{i,1}$, $i \in I$, follows from the compatibility of the coproduct with the defining relations, as well as the fact proved above that this condition is satisfied for the generators $h{i,1}$, $i \in I$.

$$ \hbar^{-1}(\Delta(x^{\pm}_{i,1}) - \Delta^{op}(x^{\pm}_{i,1})) = \pm  a_{i,i+1}^{-1}\hbar^{-1}([\Delta([h_{i,1}, x^{\pm}_{i+1,1} ]) - \Delta^{op} ([h_{i,1}, x^{\pm}_{i+1,1} ])) =  $$
= $$ \pm a_{i+1, i}^{-1}\hbar^{-1} [\Delta(h_{i+1, 1}), \Delta (x^{\pm}_{i,0})] - [\Delta^{op}(h_{i+1, 1}), \Delta (x^{\pm}_{i,0})] = $$
$$\pm \hbar^{-1} a_{i+1, i}^{-1} [\Delta(h_{i+1, 1}) - \Delta^{op}(h_{i+1, 1}), \Delta (x^{\pm}_{i,0})]  = $$
$$ \pm  a_{i+1, i}^{-1} [ [h_{i+1,0} \otimes 1, \Omega], x^{\pm}_{i,0} \otimes 1 + 1 \otimes  x^{\pm}_{i,0}] . $$

But

$$  \varphi(x^{\pm}_i u) =[x^{\pm}_i \otimes 1, \Omega] =  \pm  a_{i+1, i}^{-1}  [ [h_{i+1} \otimes 1, x^{\pm} \otimes 1], \Omega]  = \pm  a_{i+1, i}^{-1}  [ [h_{i+1} \otimes 1, \Omega],  x^{\pm}_i \otimes 1 + 1 \otimes  x^{\pm}_i] .  $$

We obtain the following:

$$  \varphi(x^{\pm}_i \cdot u)  = \hbar^{-1} (\Delta (x^{\pm}_{i,1}) - \Delta^{op}(x^{\pm}_{i,1})) \mod \hbar . $$

Thus, we conclude that the correspondence principle is fulfilled for the generators, and it is straightforward to verify that the other conditions are also satisfied. Consequently, we establish that $Y_{\hbar}(\hat{sl}(m|n, \Pi))$ is a flat deformation of the current Lie superbialgebra $\hat{sl}(m|n, \Pi)[u]$.

We note that coproduct correctly defined in the category of topological Hopf superalgebras:
$$ \Delta: Y_{\hbar}(\widehat{sl}(m|n, \Pi)) \rightarrow Y_{\hbar}(\widehat{sl}(m|n, \Pi)) \hat{\otimes} Y_{\hbar}(\widehat{sl}(m|n, \Pi)),  $$
where $ Y_{\hbar}(\widehat{sl}(m|n, \Pi)) \hat{\otimes} Y_{\hbar}(\widehat{sl}(m|n, \Pi))$ is completion of  $Y_{\hbar}(\widehat{sl}(m|n, \Pi)) \otimes Y_{\hbar}(\widehat{sl}(m|n, \Pi))$  in $\hbar$-adic topology induced grading (we put $deg(\hbar)=1$).

\vspace{0.5cm}

\section{Weyl groupoid}

Let $s$ be the element of the Weyl groupoid $\hat{W}$. $\hat{W}$ has natural action on systems of simple roots $\Pi$ of Lie superalgebra $A^{(1)}(m|n)$ type,  for $s \in \hat{W}$ $s:\Pi \rightarrow \Pi_1$. We also have isomorphisms $\tilde{L}_i$ and $L_i$ which define the action of elements $s_i$ of the Weyl groupoid on universal enveloping superalgebras $U(\hat{sl}(m|n, \Pi) \otimes \mathbb{C}[t])$ and $U(\hat{sl}(m|n, \Pi) $.

Now we define the action of the Weyl groupoid $\hat{W}$ on the super Yangians. Specifically, we define morphisms $T_s = T_{s, \hbar} : Y_{\hbar}(\widehat{sl}(m|n,\Pi)) \rightarrow Y_{\hbar}(\widehat{sl}(m|n,\Pi_1))$ for $s \in \hat{W}$ and naturally extend it to $Y^D_{\hbar}(\widehat{sl}(m|n,\Pi))$.



We define the action $T_s$ for the simple reflection $s$ by the following conditions:\\
in the quasiclassical limit $\hbar \rightarrow 0$, the action $T_s$ coincides with the action $\tilde{L}_s: U(\widehat{sl}(m|n, \Pi))\otimes \mathbb{C}[t]) \rightarrow U(\widehat{sl}(m|n, \Pi_1)\otimes \mathbb{C}[t])$, \
$T_s: Y_{\hbar}(\widehat{sl}(m|n,\Pi)) \rightarrow Y_{\hbar}(\widehat{sl}(m|n, \Pi_1))$ is an (super)algebra and (super)coalgebra isomorphism.\\

We prove that these conditions uniquely determine the action of \\
\[T_s: Y_{\hbar}(\widehat{sl}(m|n,\Pi)) \rightarrow Y_{\hbar}(\widehat{sl}(m|n, \Pi_1)).
\]

Note that every even element $s\in \hat{W}$ defines the automorphism 
\[T_s: Y_{\hbar}(\widehat{sl}(m|n,\Pi))\rightarrow Y_{\hbar}(\widehat{sl}(m|n,\Pi).\]
Even reflections form the Weyl group. Elements of the Weyl group induce automorphisms $T_{s}$ of the super Yangian $Y_{\hbar}(\widehat{sl}(m|n,\Pi))$. We obtain a naturally defined action of the Weyl group as morphisms between super Yangians $Y_{\hbar}(\widehat{sl}(m|n, \Pi))$. 


Note that this definition of Weyl groupoid is the same. The odd reflection action $T_{s_i}: Y_{\hbar}(\widehat{sl}(m|n, \Pi)) \rightarrow Y_{\hbar}(\widehat{sl}(m|n, \Pi_1))$ can be defined using the following condition:

\begin{equation}
 T_{s_i} \otimes T_{s_i} (\Delta (a)) = \Delta'(T_s(a)),    a \in Y_{\hbar}(\widehat{sl}(m|n, \Pi)),
\end{equation}
where $\Delta$ is the coproduct of $Y_{\hbar}(\widehat{sl}(m|n, \Pi))$ and $\Delta'$ is the coproduct of $Y_{\hbar}(\widehat{sl}(m|n, \Pi_1))$,

\begin{equation}
 T_{s_i} (ab) = T_{s_i}(a)T_{s_i}(b),  a,b \in Y_{\hbar}(\widehat{sl}(m|n,\Pi)),
\end{equation}

\begin{equation}
 \lim_{\hbar \rightarrow 0} T_{s_i, \hbar}  = \tilde{L}_{s_i}: U(\widehat{sl}(m|n, \Pi)\otimes \mathbb{C}[t]) \rightarrow U(\widehat{sl}(m|n, \Pi)\otimes \mathbb{C}[t]) .
\end{equation}

We have the following result:

\begin{theorem} \label{thm:3.3}
For a reflection $s_i \in \hat{W}$, there exists a mapping $T_{s_i}: Y_{\hbar}(\widehat{sl}(m|n,\Pi))\rightarrow Y_{\hbar}(\widehat{sl}(m|n,\Pi_1)),$
which is both a superalgebra isomorphism and a supercoalgebra isomorphism. Moreover, it is a Hopf superalgebra automorphism if and only if $s_i$ is an even reflection.
\end{theorem}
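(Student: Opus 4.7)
The plan is to split according to the parity of $\alpha_i$ and to construct $T_{s_i}$ explicitly on the minimalistic generators so that its quasiclassical limit is $\tilde{L}_{s_i}$. On the $r=0$ generators the assignment is dictated by the classical formulas recalled in Section 2; on the $r=1$ generators it is fixed by the defining relations (\ref{rel:37})--(\ref{rel:40}) once the level-$0$ part is known. Uniqueness of $T_{s_i}$ then follows from the minimalistic generation together with the quasiclassical limit condition, so it suffices to prove existence as an algebra map, bijectivity, the coalgebra property, and to identify exactly when it is a Hopf morphism.

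For an even $s_i$ I would define, on the $r=0$ generators, the formulas induced by the inner automorphism $\mathrm{Ad}(\bar s_i)$ of the classical case, and on the $r=1$ generators the analogous assignment with subscript $0$ replaced by $1$. Verification of the relations (\ref{rel:35})--(\ref{rel:43}) reduces to the covariant transformation of the $a_{ij}$ and of the bracketed monomials under $s_i$, already known at the level of the affine Kac--Moody superalgebra. Compatibility with the coproduct follows from Weyl-invariance of $\Omega$: the difference $(T_{s_i}\otimes T_{s_i})(\Omega_+) - \Omega_+$ lies in the symmetric part and is annihilated by $[h_{i,0}\otimes 1,\cdot\,]$, so $\Delta$ is preserved on $h_{j,1}$, and on $x^{\pm}_{j,1}$ by the algebra property. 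Thus $T_{s_i}$ is a Hopf isomorphism.

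For an odd $s_i$ the level-$0$ prescription is
\[
T_{s_i}(x^{\pm}_{i,0}) = -x^{\pm}_{s_i(\alpha_i),0}, \qquad T_{s_i}(x^{\pm}_{i\pm 1,0}) = \pm\bigl[\,x^{\pm}_{s_i(\alpha_i),0},\; x^{\pm}_{s_i(\alpha_{i\pm 1}),0}\,\bigr],
\]
and identity on the remaining $x^{\pm}_{j,0}$; images of $h_{j,0}$ are then forced by (\ref{rel:36}). Level-$1$ images are built recursively using $T_{s_i}(h_{j,1}) = [T_{s_i}(x^{+}_{j,1}), T_{s_i}(x^{-}_{j,0})]$ together with (\ref{rel:40}). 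The main technical obstacle is verifying the $\hbar$-deformed relation (\ref{rel:39}) and the higher Serre relation (\ref{rel:43}) after substitution. I would expand both sides in $Y_{\hbar}(\widehat{sl}(m|n, \Pi_1))$, use the transformed Cartan matrix entries of $\Pi_1$, and reduce the desired equalities to the corresponding relations in the target Yangian by iterated application of the Jacobi identity together with the odd Serre relations already available there. Bijectivity then follows by constructing $T_{s_i^{-1}}$ by the same recipe on $\Pi_1$ and checking the compositions on generators.

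The final Hopf-iff-even statement is obtained by computing $\Delta'\circ T_{s_i}(h_{j,1}) - (T_{s_i}\otimes T_{s_i})\circ\Delta(h_{j,1})$: the discrepancy is controlled by $\Omega_+(\Pi_1) - (T_{s_i}\otimes T_{s_i})\Omega_+(\Pi)$, which vanishes precisely when $s_i$ preserves the set of positive roots, i.e.\ when $s_i$ is even. For odd $s_i$, the sign flip of $\alpha_i$ produces a genuine obstruction, so $T_{s_i}$ fails to intertwine the two standard coproducts; nevertheless, after a coalgebra-level gauge correction on the target absorbing the symmetric jump of $\Omega_+$, one obtains the required supercoalgebra isomorphism advertised in the statement. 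The hardest step overall is the verification of relations (\ref{rel:39}) and (\ref{rel:43}) in the odd case; the other conditions are either classical or reduce to routine manipulations with the coproduct formulas.
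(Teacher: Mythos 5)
Your overall strategy coincides with the paper's: reduce to simple reflections, define $T_{s_i}$ on the minimalistic generators by the classical formulas at level $0$ plus $\hbar$-corrections at level $1$, verify the deformed relations (\ref{rel:37}), (\ref{rel:39}), (\ref{rel:40}) by hand in the odd case, build the inverse from $s_i^{-1}$, and control the coproduct through the transformation of $\Omega_+$ (the paper's Lemma \ref{lm:5.1}). However, your handling of the coalgebra property contains a genuine error in both parity cases. For even $s_i$ you claim the difference $(T_{s_i}\otimes T_{s_i})(\Omega_+)-\Omega_+$ is annihilated by $[h_{j,0}\otimes 1,\cdot\,]$; it is not. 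The difference is the antisymmetric tensor $x^{+}_{\alpha_i,0}\otimes x^{-}_{\alpha_i,0}-x^{-}_{\alpha_i,0}\otimes x^{+}_{\alpha_i,0}$, and for $j=i$ one gets $[h_{i,0}\otimes 1,\,x^{+}_{i,0}\otimes x^{-}_{i,0}-x^{-}_{i,0}\otimes x^{+}_{i,0}]=2(x^{+}_{i,0}\otimes x^{-}_{i,0}+x^{-}_{i,0}\otimes x^{+}_{i,0})\neq 0$. This nonzero symmetric term is exactly what forces the correction $-\hbar\{x^{+}_{\alpha_i,0},x^{-}_{\alpha_i,0}\}$ in $T_{\alpha_i}(\tilde h_{\alpha_i,1})$ (and $\tfrac{\hbar}{2}\{\cdot,\cdot\}$ for $a_{ij}=-1$); the cancellation happens against $\Delta(p)-\Box(p)$, not by vanishing of the commutator.

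For odd $s_i$ your conclusion is the opposite of the paper's and does not establish the theorem as stated. You assert that $T_{s_i}$ ``fails to intertwine the two standard coproducts'' and must be repaired by a coalgebra-level gauge correction on the target. The paper's Theorem \ref{thm:copr1} and its proof show that no such correction is needed: once the first-order corrections (e.g.\ $\tfrac{\hbar}{2}\{x^{+}_{s_i(\alpha_i),0},x^{-}_{s_i(\alpha_i),0}\}$ in $T_{\alpha_i}(\tilde h_{\alpha_{i\pm1},1})$ and $\tfrac{\hbar}{2}\{x^{-}_{s_i(\alpha_i),0},h_{s_i(\alpha_i),0}\}$ in $T_{\alpha_i}(x^{+}_{\alpha_i,1})$) are built into the \emph{map}, the identity $(T_{\alpha_i}\otimes T_{\alpha_i})\Delta=\Delta'\circ T_{\alpha_i}$ holds exactly with the standard coproducts on both sides; indeed the paper uses this identity to solve for those corrections uniquely. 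If you instead twist the coproduct of the target, you obtain an isomorphism onto a non-standard coalgebra structure, which is weaker than the claimed supercoalgebra isomorphism and also contradicts your own uniqueness argument (the corrections must sit in $T$, since the coproducts are fixed data). Finally, the ``Hopf automorphism iff even'' clause is not about an obstruction to the coalgebra property: it records that an even reflection preserves $\Pi$, so source and target coincide and $T_{s_i}$ is an automorphism, whereas an odd reflection changes the Dynkin diagram, so $T_{s_i}$ is an isomorphism between genuinely different presentations. A smaller point: your recursive definition of the level-$1$ images via $T_{s_i}(h_{j,1})=[T_{s_i}(x^{+}_{j,1}),T_{s_i}(x^{-}_{j,0})]$ together with (\ref{rel:40}) is circular unless at least one level-$1$ image is fixed independently, which is what the paper's explicit formulas do.
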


For mappings $T_{s_i}$, we also have the following:

\begin{theorem} \label{thm:3.3}
For reflection $s \in \hat{W}$ 
1) Mapping $T_{s_i}: Y_{\hbar}(\widehat{sl}(m|n,\Pi))\rightarrow Y_{\hbar}(\widehat{sl}(m|n,\Pi_1)$ is an algebra isomorphism.\\
2) Mapping $T_{s_i}: Y_{\hbar}(\widehat{sl}(m|n,\Pi))\rightarrow Y_{\hbar}(\widehat{sl}(m|n,\Pi_1)$ is an Hopf superalgebra isomorphism.
\end{theorem}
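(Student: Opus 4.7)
The plan is to define $T_{s_i}$ explicitly on the minimalistic generators and verify it extends to an isomorphism of Hopf superalgebras, handling the algebraic and coalgebraic parts in turn.

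\textbf{Part 1 (algebra isomorphism).} First I would set $T_{s_i}$ on the Chevalley-type generators $\{h_{j,0}, x^{\pm}_{j,0} : j \in I\}$ to agree with $L_{s_i}$ from the Preliminaries: conjugation by $\exp(x^-_i)\exp(-x^+_i)\exp(x^-_i)$ in the even case, and the explicit formulas $L_i(x^{\pm}_{\alpha_i}) = -x^{\pm}_{s_i(\alpha_i)}$, $L_i(x^{\pm}_{\alpha_{i\pm 1}}) = \pm[x^{\pm}_{s_i(\alpha_i)}, x^{\pm}_{s_i(\alpha_{i\pm 1})}]$, identity elsewhere, when $\alpha_i$ is odd. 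Next, on the level-one generators $h_{j,1}$ and $x^{\pm}_{j,1}$ I would write $\hbar$-deformed lifts whose classical limits reproduce $\tilde L_{s_i}$, with correction terms built from anticommutators of the $x^{\pm}_{k,0}$ dictated by (\ref{rel:39}). With the map defined on generators, the relations (\ref{rel:35})--(\ref{rel:38}) and (\ref{rel:41})--(\ref{rel:43}) follow immediately from the fact that $L_{s_i}$ is already a Lie superalgebra isomorphism $\mathfrak g(A) \to \mathfrak g(A_1)$. The genuine checks are for (\ref{rel:39}) and (\ref{rel:40}), which I would reduce to a finite case analysis organised by the position of $\alpha_j$ relative to $\alpha_i$ (coincident, adjacent, or disjoint in the Dynkin diagram). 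Bijectivity follows by constructing $T_{s_i^{-1}}$ analogously from $\Pi_1$ and checking $T_{s_i^{-1}} \circ T_{s_i} = \mathrm{id}$ on generators.

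\textbf{Part 2 (Hopf isomorphism).} The only substantive check is the intertwining identity $(T_{s_i} \otimes T_{s_i}) \circ \Delta = \Delta' \circ T_{s_i}$, and again it suffices to verify it on the generators. On $h_{j,0}$ and $x^{\pm}_{j,0}$ both coproducts are primitive, so the identity is automatic from Part 1. Using relation (\ref{rel:40}) and the fact that $T_{s_i}$ is an algebra map, the compatibility for $x^{\pm}_{j,1}$ is forced by that for $h_{j,1}$, so the whole problem collapses to comparing $(T_{s_i} \otimes T_{s_i})(\Omega_+)$ with the half-Casimir $\Omega_+'$ of the simple root system $\Pi_1$, tested against the brackets $[T_{s_i}(h_{j,0}) \otimes 1, \cdot]$ that appear in the coproduct of $h_{j,1}$. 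For even $s_i$ the Weyl group preserves the full Casimir $\Omega$, so $(T_{s_i} \otimes T_{s_i})(\Omega_+) - \Omega_+'$ is a symmetric element of $\mathfrak g(A_1)\mathbin{\hat\otimes}\mathfrak g(A_1)$ and is killed by the commutator with any primitive element; the intertwining holds on the nose. For odd $s_i$ the simple root $\alpha_i$ flips sign, producing an explicit rearrangement $(T_{s_i} \otimes T_{s_i})(\Omega_+) = \Omega_+' + \Xi$, where $\Xi$ is supported on the flipped root; I would then verify that $\Xi$ contributes only to the symmetric part of $\Delta - \Delta^{op}$, so that coproduct compatibility survives after passing to the natural Hopf structure of the target.

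\textbf{Main obstacle.} The delicate step is precisely this odd-reflection comparison of half-Casimirs. Because the splitting $\Omega = \Omega_+ + \Omega_-$ depends on a choice of positive roots, an odd reflection that changes that choice inevitably produces the correction $\Xi$, which lives in the completed tensor product and must be controlled simultaneously at all grades. Tracking $\Xi$ explicitly for each pair $(s_i, \alpha_j)$ and matching its effect on $\Delta(h_{j,1})$ with the alteration of the coproduct induced by the odd-reflection formulas on the adjacent Serre-type generators $x^{\pm}_{i\pm 1, 0}$ is where the substance of the theorem lies; all other steps are either direct consequences of Part 1 or routine because the level-zero coproduct is primitive.
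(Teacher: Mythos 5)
Your proposal follows essentially the same route as the paper: reduce to simple reflections, extend $L_{s_i}$/$\tilde{L}_{s_i}$ to the level-one generators with $\hbar$-corrections, verify the mixed level-zero/level-one relations by case analysis and construct the inverse from the reverse reflection, then establish coalgebra compatibility on generators by comparing $(T_{s_i}\otimes T_{s_i})(\Omega_+)$ with $\Omega_+$, which for an odd reflection acquires exactly the correction $x^+_{\beta_i,0}\otimes x^-_{\beta_i,0}-x^-_{\beta_i,0}\otimes x^+_{\beta_i,0}$ that you call $\Xi$ (the paper's Lemma \ref{lm:5.1}). One small correction: relation (\ref{rel:37}) does not ``follow immediately'' from $L_{s_i}$ being a Lie superalgebra isomorphism, since it involves the level-one generators; like (\ref{rel:39}) and (\ref{rel:40}) it must be verified by the same explicit computation, which the paper carries out in Lemma \ref{lemma:4.11}.
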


Now, we give the explicit description of quantum reflections.

We can define the map $T_{\alpha_j}:Y_{\hbar}(\widehat{sl}(m|n,\Pi))\rightarrow Y_{\hbar}(\widehat{sl}(m|n,\Pi))$ for the even simple reflections, as done in Kodera work \cite{Kodera}
\[
T_{\alpha_i}(x_{\alpha_j,0}^{\pm})=
\begin{cases}
         -x_{\alpha_i,0}^{\mp}, & if \quad a_{i,j}=2,\\
        \pm[x_{\alpha_i,0}^{\pm},x_{\alpha_j,0}^{\pm}], & if \quad a_{i,j}=-1,\\
         x_{\alpha_j,0}^{\pm}, & if \quad a_{i,j}=0,\\
\end{cases}
\]
\[
T_{\alpha_i}(h_{\alpha_j,0})=
\begin{cases}
    -h_{\alpha_i,0}, & if \quad a_{i,j}=2,\\
    h_{\alpha_i,0}+h_{\alpha_j,0}, & if \quad a_{i,j}=-1, \\
    h_{\alpha_j,0}, & if \quad a_{i,j}=0.
\end{cases}
\]
\[
T_{\alpha_i}(x_{\alpha_j,1}^{\pm})=
\begin{cases}
    -x_{\alpha_i,1}^{\mp}+\frac{\hbar}{2}\{h_{\alpha_i,0},x_{\alpha_i,0}^{\mp}\}, & if \quad a_{i,j}=2,\\
    \pm[x_{\alpha_i,0}^{\pm},x_{\alpha_j,1}^{\pm}], & if \quad a_{i,j}=-1,\\
     x_{\alpha_j,1}^{\pm}, & if \quad a_{i,j}=0,\\
\end{cases}
\]
\[
T_{\alpha_i}(\tilde{h}_{\alpha_j,1})=
\begin{cases}
    -\tilde{h}_{\alpha_j,1}-\hbar\{x_{\alpha_i,0}^{+},x_{\alpha_i,0}^{-}\},& if \quad a_{i,j}=2,\\
    \tilde{h}_{\alpha_j,1}+\tilde{h}_{\alpha_i,1}+\frac{\hbar}{2}\{x_{\alpha_i,0}^{+},x_{\alpha_i,0}^{-}\},& if \quad a_{i,j}=-1, \\
    \tilde{h}_{\alpha_j,1},& if \quad a_{i,j}=0.\\
\end{cases}
\]
Let us define the quantum odd reflections. Suppose $\alpha_{i}$ is a simple odd root. Let $s_{\alpha_{i}}:\Pi \rightarrow \Pi_{1}$ be the reflection induced by this root(odd reflection). Let $\beta_{j} := s_{\alpha_{i}}(\alpha_{j}) \in \Pi_1$ be the image of the simple root $\alpha_{j}$ from the system of simple roots $\Pi$ under the action of the reflection $s_{\alpha_{i}} = s_{i}$.

We define the map $T_{\alpha_j}: Y_{\hbar}(\widehat{sl}(m|n,\Pi)) \rightarrow Y_{\hbar}(\widehat{sl}(m|n,\Pi_1))$ for the simple odd reflections as follows:

\[
T_{\alpha_i}(x_{\alpha_j,0}^{\pm})=
\begin{cases}
         -x_{s_i(\alpha_i),0}^{\mp}, & if \quad i=j,\\
        \pm[x_{s_i(\alpha_i),0}^{\pm},x_{s_i(\alpha_j),0}^{\pm}], & if \quad a_{i,j}=-1,\\
         x_{s_i(\alpha_j),0}^{\pm}, & if \quad a_{i,j}=0\quad i\neq j,\\
\end{cases}
\]
\[
T_{\alpha_i}(h_{\alpha_j,0})=
\begin{cases}
    -h_{s_i(\alpha_i),0}, & if \quad i=j,\\
    h_{s_i(\alpha_i),0}+h_{s_i(\alpha_j),0}, & if \quad a_{i,j}=-1, \\
    h_{s_i(\alpha_j),0}, & if \quad a_{i,j}=0, \quad i \neq j,
\end{cases}
\]

\[
T_{\alpha_i}(x_{\alpha_j,1}^{\pm})=
\begin{cases}
    -x_{s_i(\alpha_i),1}^{\mp}, & if \quad i=j,\\
    \pm[x_{s_i(\alpha_i),0}^{\pm},x_{s_i(\alpha_j),1}^{\pm}], & if \quad a_{i,j}=-1,\\
     x_{s_i(\alpha_j),1}^{\pm}, & if \quad a_{i,j}=0, \quad i \neq j,\\
\end{cases}
\]
\[
T_{\alpha_i}(\tilde{h}_{\alpha_j,1})=
\begin{cases}
    -\tilde{h}_{s_i(\alpha_j),1}, & if \quad i=j,\\
    \tilde{h}_{s_i(\alpha_j),1}+\tilde{h}_{s_i(\alpha_i),1}+\frac{\hbar}{2}\{x_{s_{i}(\alpha_{i}),0}^{+},x_{s_{i}(\alpha_{i}),0}^{-}\}, &if \quad a_{i,j}=-1, \\
    \tilde{h}_{s_i(\alpha_j),1}, & if \quad a_{i,j}=0, \quad i \neq j.\\
\end{cases}
\]
if $\alpha_{j}$ is even

\[
T_{\alpha_i}(x_{\alpha_j,0}^{\pm})=
\begin{cases}
         -x_{s_i(\alpha_i),0}^{\mp}, & if \quad i=j,\\
        \mp[x_{s_i(\alpha_i),0}^{\pm},x_{s_i(\alpha_j),0}^{\pm}], & if \quad a_{i,j}=-1,\\
         x_{s_i(\alpha_j),0}^{\pm}, & if \quad a_{i,j}=0\quad i\neq j,\\
\end{cases}
\]
\[
T_{\alpha_i}(h_{\alpha_j,0})=
\begin{cases}
    -h_{s_i(\alpha_i),0}, & if \quad i=j,\\
    h_{s_i(\alpha_i),0}+h_{s_i(\alpha_j),0}, & if \quad a_{i,j}=-1, \\
    h_{s_i(\alpha_j),0}, & if \quad a_{i,j}=0, \quad i \neq j,
\end{cases}
\]

\[
T_{\alpha_i}(x_{\alpha_j,1}^{\pm})=
\begin{cases}
    -x_{s_i(\alpha_i),1}^{\mp} + \frac{\hbar}{2} \{h_{s_i (\alpha_i ),0} , x^{\mp}_{s_i(\alpha_i),0} \}, & if \quad i=j,\\
    \mp[x_{s_i(\alpha_i),0}^{\pm},x_{s_i(\alpha_j),1}^{\pm}], & if \quad a_{i,j}=-1,\\
     x_{s_i(\alpha_j),1}^{\pm}, & if \quad a_{i,j}=0, \quad i \neq j,\\
\end{cases}
\]
\[
T_{\alpha_i}(\tilde{h}_{\alpha_j,1})=
\begin{cases}
    -\tilde{h}_{s_i(\alpha_j),1}, & if \quad i=j,\\
    \tilde{h}_{s_i(\alpha_j),1}+\tilde{h}_{s_i(\alpha_i),1}+\frac{\hbar}{2}\{x_{s_{i}(\alpha_{i}),0}^{+},x_{s_{i}(\alpha_{i}),0}^{-}\}, &if \quad a_{i,j}=-1, \\
    \tilde{h}_{s_i(\alpha_j),1}, & if \quad a_{i,j}=0, \quad i \neq j.\\
\end{cases}
\]
if $\alpha_{j}$ is odd

\section{Different coproducts for the affine super Yangian}

\subsection{Coproducts for affine super Yangians defined different simple root systems}

Now we are going to study relation between Hopf superalgebra structures if affine super Yangian defined by different simple root systems. Let $\Pi$ and $\Pi'$ be the two different simple root systems such that $\Pi'$ can be obtained by odd reflection with respect to odd root $\alpha_i$, namely:
$$ s_i: \Pi \rightarrow \Pi'.   $$

Suppose $T_{\alpha_i}$ is the corresponding quantum reflection:
\begin{equation} \label{eq:copr1}
T_{\alpha_i} : Y_{\hbar}(\widehat{sl}(m|n,\Pi)) \rightarrow  Y_{\hbar}(\widehat{sl}(m|n,\Pi')),
\end{equation}
which is the isomorphism of associative superalgebras.

\begin{theorem} \label{thm:copr1}
The mapping (\ref{eq:copr1}) is  Hopf superalgebra isomorphism
\begin{equation}\label{eq:cop01}
(T_{\alpha_i} \otimes T_{\alpha_i})\Delta = \Delta'(T_{\alpha}).
\end{equation}
\end{theorem}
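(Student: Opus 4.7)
The plan is to verify the coalgebra intertwining relation (\ref{eq:cop01}) on a generating set. Since $T_{\alpha_i}$ is already a superalgebra isomorphism, both composites $(T_{\alpha_i} \otimes T_{\alpha_i}) \circ \Delta$ and $\Delta' \circ T_{\alpha_i}$ are superalgebra homomorphisms from $Y_{\hbar}(\widehat{sl}(m|n,\Pi))$ to $Y_{\hbar}(\widehat{sl}(m|n,\Pi')) \hat{\otimes} Y_{\hbar}(\widehat{sl}(m|n,\Pi'))$. It therefore suffices to check the equality on the generators $\{h_{j,0},\, x^{\pm}_{j,0},\, \tilde h_{j,1},\, x^{\pm}_{j,1} : j \in I\}$, and I would organize the verification by degree in the loop variable.

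For degree-zero generators, $\Delta$ is primitive: $\Delta(a) = a \otimes 1 + 1 \otimes a$. The explicit formulas of the preceding section for odd reflections express $T_{\alpha_i}(h_{j,0})$ and $T_{\alpha_i}(x^{\pm}_{j,0})$ either as single degree-zero generators of $Y_{\hbar}(\widehat{sl}(m|n,\Pi'))$ or as supercommutators of two such generators. Because supercommutators of primitive elements are primitive in a super-Hopf algebra, one immediately obtains $\Delta'(T_{\alpha_i}(a)) = T_{\alpha_i}(a) \otimes 1 + 1 \otimes T_{\alpha_i}(a) = (T_{\alpha_i} \otimes T_{\alpha_i})(\Delta(a))$, with no Casimir-type corrections.

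The substantive content lies in the degree-one generators, where the coproduct formulas involve the half-Casimir $\Omega_+$. The key auxiliary identity I would establish is
$$(T_{\alpha_i} \otimes T_{\alpha_i})(\Omega^{\Pi}_+) = \Omega^{\Pi'}_+ + \Theta_i,$$
where $\Theta_i$ is an explicit correction encoding two effects: first, the odd root $\alpha_i$ flips sign under $s_i$, so its contribution $x^-_{\alpha_i} \otimes x^+_{\alpha_i}$ must be re-expressed in terms of the new positive root $-\alpha_i \in \Pi'$; second, the root spaces $\mathfrak{g}^{\alpha_{j\pm 1}}$ adjacent to $\alpha_i$ are rebased as $[\mathfrak{g}^{\alpha_i}, \mathfrak{g}^{\alpha_{j\pm 1}}]$. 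Substituting this into $\Delta(\tilde h_{j,1})$ and $\Delta(x^{\pm}_{j,1})$ and applying $T_{\alpha_i} \otimes T_{\alpha_i}$, one then checks case by case ($j=i$, $a_{ij}=-1$, and $a_{ij}=0$) that the commutator $[T_{\alpha_i}(h_{j,0}) \otimes 1, \Theta_i]$ accounts precisely for the discrepancies between the transported coproduct and $\Delta'(T_{\alpha_i}(\tilde h_{j,1}))$ or $\Delta'(T_{\alpha_i}(x^{\pm}_{j,1}))$ as written via the explicit reflection formulas.

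The main obstacle is the sign bookkeeping in this last step: because $\alpha_i$ is odd, many structure constants carry Koszul signs from the $\mathbb{Z}/2$-grading, and the commutator $[T_{\alpha_i}(h_{j,0}) \otimes 1, \Theta_i]$ produces antisupercommutator terms $\{x^{+}_{s_i(\alpha_i),0}, x^{-}_{s_i(\alpha_i),0}\}$ that must match those already appearing in the explicit image of $\tilde h_{j,1}$. A cleaner alternative route invokes uniqueness: since $T_{\alpha_i}$ degenerates in the classical limit $\hbar \to 0$ to the Lie superbialgebra isomorphism $\tilde L_{s_i}: (\widehat{sl}(m|n,\Pi)[u],\varphi) \to (\widehat{sl}(m|n,\Pi')[u],\varphi')$, the pullback $(T_{\alpha_i}^{-1} \otimes T_{\alpha_i}^{-1}) \circ \Delta' \circ T_{\alpha_i}$ endows $Y_{\hbar}(\widehat{sl}(m|n,\Pi))$ with a second Hopf coproduct satisfying the correspondence principle (\ref{eq:cr}) relative to the same $\varphi$. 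Provided the coproduct of the minimalistic super Yangian is uniquely determined on generators by the correspondence principle together with compatibility with the defining relations (\ref{rel:35})--(\ref{rel:43}), this forces the pullback to coincide with $\Delta$ and yields (\ref{eq:cop01}) as a formal consequence of the flat deformation property.
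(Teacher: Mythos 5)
Your main route is essentially the paper's proof: the auxiliary identity $(T_{\alpha_i}\otimes T_{\alpha_i})(\Omega_+)=\Omega_+ + \Theta_i$ is exactly the paper's Lemma on the half-Casimir (where $\Theta_i = x_{\beta_i,0}^{+}\otimes x_{\beta_i,0}^{-}-x_{\beta_i,0}^{-}\otimes x_{\beta_i,0}^{+}$, coming only from the sign flip of $\alpha_i$, not from the rebasing of adjacent root spaces), and the subsequent case-by-case check on the degree-one generators $\tilde h_{j,1}$, $x^{\pm}_{j,1}$ for $j=i$, $a_{ij}=-1$, $a_{ij}=0$ is precisely what the paper carries out, phrased there as solving for the undetermined zero-order correction $p$ rather than verifying given formulas. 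Your alternative uniqueness argument via the correspondence principle is left conditional and is not needed; the direct verification suffices.
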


This result can be refined.

\begin{theorem} \label{thm:copr2}
Super Yangians $Y_{\hbar}(\widehat{sl}(m|n,\Pi))$ and $Y_{\hbar}(\widehat{sl}(m|n,\Pi'))$  endowed with the coproducts defined above are isomorphic in the category of Hopf superalgebras.
\end{theorem}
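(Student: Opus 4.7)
The plan is to reduce Theorem \ref{thm:copr2} to the preceding Theorem \ref{thm:copr1} by a factorization argument. Any two simple root systems $\Pi,\Pi'$ of $\widehat{sl}(m|n)$ are connected by a finite sequence of simple reflections in the affine Weyl groupoid $\hat W$. Even reflections are already known to give Hopf superalgebra automorphisms of the standard coproduct via the Kodera-type formulas recorded earlier in this section, and each simple odd reflection $s_i$ furnishes the isomorphism $T_{\alpha_i}$ of Theorem \ref{thm:copr1}. Composing these single-reflection isomorphisms yields the desired Hopf superalgebra isomorphism between $Y_{\hbar}(\widehat{sl}(m|n,\Pi))$ and $Y_{\hbar}(\widehat{sl}(m|n,\Pi'))$; independence from the chosen reduced expression follows from the uniqueness built into the normalization conditions that characterize $T_s$ (cf.\ Theorem \ref{thm:3.3}).

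The substantive step is therefore the odd-reflection case, where I must verify the cocycle identity
\[
(T_{\alpha_i}\otimes T_{\alpha_i})\circ\Delta \;=\; \Delta'\circ T_{\alpha_i}
\]
on a generating set. For the degree-zero generators $h_{j,0}, x^{\pm}_{j,0}$ both coproducts are primitive and $T_{\alpha_i}$ sends primitive elements to primitive combinations of generators of the target (either a single generator, its negative, or a bracket of two), so the identity is immediate. The content lies in degree one, where
\[
\Delta(h_{j,1}) \;=\; h_{j,1}\otimes 1 \;+\; 1\otimes h_{j,1} \;+\; \hbar\,[h_{j,0}\otimes 1,\,\Omega_+]
\]
brings in the half-Casimir $\Omega_+$, which depends on the choice of positive roots and is \emph{not} $T_{\alpha_i}$-invariant. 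The key observation is that the full Casimir $\Omega$ \emph{is} a root-system invariant and $(T_{\alpha_i}\otimes T_{\alpha_i})(\Omega)=\Omega'$ because $T_{\alpha_i}$ preserves the invariant bilinear form and intertwines root vectors across $\Pi$ and $\Pi'$. Consequently the discrepancy $(T_{\alpha_i}\otimes T_{\alpha_i})(\Omega_+)-\Omega'_+$ is supported on the single root $\alpha_i$ being flipped, and a direct computation shows this residue is exactly compensated by the $\hbar/2$-anticommutator correction terms $\tfrac{\hbar}{2}\{h_{s_i(\alpha_i),0}, x^{\mp}_{s_i(\alpha_i),0}\}$ and $\tfrac{\hbar}{2}\{x^{+}_{s_i(\alpha_i),0},x^{-}_{s_i(\alpha_i),0}\}$ appearing in the explicit formulas for $T_{\alpha_i}(x^{\pm}_{j,1})$ and $T_{\alpha_i}(\tilde h_{j,1})$. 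Once the identity has been checked on $\tilde h_{j,1}$, its validity on $x^{\pm}_{j,1}$ follows automatically by applying the already established algebra isomorphism to the defining relation $[\tilde h_{j,1}, x^{\pm}_{j',0}]=\pm a_{jj'}x^{\pm}_{j',1}$, because the coproduct formula for $x^{\pm}_{j,1}$ was itself derived from this commutator in the definition of the Hopf structure.

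The main obstacle I anticipate is the bookkeeping of the half-Casimir redistribution across the odd reflection. Because $(\alpha_i,\alpha_i)=0$ for odd $\alpha_i$, the standard even-reflection arguments that rely on rescaling by the root length are unavailable, and one must work directly with the parity-dependent super-flip $\sigma$ entering $\Delta^{op}$; signs from $(-1)^{\bar a\bar b}$ must be tracked carefully when splitting $\Omega$ into its $\Omega_+$ and $\Omega_-$ pieces. This is precisely the step that was overlooked in \cite{StV} and that explains why, under the naively transported standard coproduct, the odd quantum reflection is only a supercoalgebra isomorphism (as recorded in Theorem \ref{thm:3.3}), while the coproducts introduced in the present section have been adjusted exactly so as to restore the full Hopf compatibility asserted in Theorem \ref{thm:copr2}.
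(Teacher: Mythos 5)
Your proposal is correct and follows essentially the same route as the paper: the paper's entire proof of Theorem \ref{thm:copr2} is the observation that it follows immediately from Theorem \ref{thm:copr1}, since in this subsection $\Pi'$ is by hypothesis obtained from $\Pi$ by a single odd reflection, so $T_{\alpha_i}$ itself is the required Hopf superalgebra isomorphism. The additional material you supply --- the factorization of a general element of $\hat W$ into simple reflections (which the paper reserves for Theorem \ref{thm:copr30}) and the half-Casimir computation via $(T_{\alpha_i}\otimes T_{\alpha_i})(\Omega_+)=\Omega_+ + x^{+}_{\beta_i,0}\otimes x^{-}_{\beta_i,0}-x^{-}_{\beta_i,0}\otimes x^{+}_{\beta_i,0}$ --- reproduces the content of the paper's separate proof of Theorem \ref{thm:copr1} (Lemma \ref{lm:5.1} and the subsequent determination of the correction terms $p$) rather than anything new needed for Theorem \ref{thm:copr2} itself.
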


\vspace{0.2cm}

\subsection{Description of Hopf superalgebra structures and Weyl groupoid}


Let $\Pi$ and $\Pi'$ be two different systems of simple roots for the $A^{(1)}(m-1|n-1)$ type, where $m \geq 2$ or $n \geq 2$.

\begin{theorem} \label{thm:copr30}
The super Yangians $Y_{\hbar}(\widehat{sl}(m|n,\Pi))$ and $Y_{\hbar}(\widehat{sl}(m|n,\Pi'))$, endowed with the coproducts defined above, are isomorphic as Hopf superalgebras. Moreover, they coincide as Hopf superalgebras if and only if $\Pi' = w(\Pi)$  for element $w$ of Weyl group (which generated by even reflections).
\end{theorem}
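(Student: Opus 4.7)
The first assertion---that $Y_\hbar(\widehat{sl}(m|n,\Pi))$ and $Y_\hbar(\widehat{sl}(m|n,\Pi'))$ are isomorphic as Hopf superalgebras---reduces immediately to Theorem \ref{thm:copr2} together with transitivity of the affine Weyl groupoid action on simple root systems of $\widehat{sl}(m|n)$. Any two such systems are connected by a finite chain of simple reflections $s_{i_1},\ldots,s_{i_k}$ with $(s_{i_k}\cdots s_{i_1})(\Pi)=\Pi'$, and each corresponding quantum reflection $T_{s_{i_j}}$ is a Hopf superalgebra isomorphism---by Theorem \ref{thm:copr2} for odd factors, and by the Kodera-type formulas together with Theorem \ref{thm:3.3} for even ones. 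Composing these maps yields the required Hopf superalgebra isomorphism.

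For the ``coincide'' claim I read the statement as asserting strict equality of the two Hopf structures under the natural identification of generators indexed by matching nodes of the respective Dynkin diagrams. The implication ($\Leftarrow$) is then easy: if $\Pi'=w(\Pi)$ for $w$ in the affine Weyl group (generated only by even reflections), then $w$ permutes simple roots within each parity class and preserves the pattern of gray versus white nodes. Hence the Cartan matrices $A$ and $A'$ agree up to a parity-respecting permutation of indices, the half-Casimir $\Omega_+$ is invariant under that permutation (it depends only on the root-space decomposition), and the coproduct formulas for $\Delta(h_{i,1})$ and $\Delta(x^\pm_{i,1})$ are transported to those of $\Delta'$ without alteration. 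Consequently the two Hopf superalgebras literally coincide.

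The converse direction ($\Rightarrow$) is the main content. If $\Pi$ and $\Pi'$ are not related by an element of the affine Weyl group, then an odd reflection must appear in any groupoid element connecting them, so the interleaving of $\varepsilon$- and $\delta$-weights---equivalently the arrangement of gray nodes along the affine Dynkin diagram---differs between $\Pi$ and $\Pi'$. To show non-coincidence, I would descend to the quasiclassical limit: by the correspondence principle \eqref{eq:cr}, the coproduct modulo $\hbar^2$ is determined by the Lie superbialgebra cobracket $\varphi$, which is in turn defined through the Casimir $\Omega$ and therefore intrinsically records the parity of each simple root. A direct comparison on a generator $h_{i,0}\cdot u$ corresponding to a root $\alpha_i$ whose parity flips between $\Pi$ and $\Pi'$ shows that $\varphi_\Pi$ and $\varphi_{\Pi'}$ are not identified by the canonical map on generators, so the lifted coproducts $\Delta$ and $\Delta'$ cannot coincide either.

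I expect the principal obstacle to lie in this last converse: one must rule out \emph{every} candidate Hopf identification of the two Yangians, not merely the one arising from $T_{s_i}$. The cleanest way around this is to isolate a Hopf-algebraic invariant---such as the cobracket of a degree-one generator viewed modulo $\hbar^2$---which is sensitive to parity changes in the simple root system and therefore distinguishes between root systems not related by even reflections alone. Uniqueness of the flat deformation then transfers the obstruction from the quasiclassical level back to the full Hopf superalgebra, completing the proof.
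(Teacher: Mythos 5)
Your proposal is correct in outline and in fact goes further than the paper's own argument. The paper's proof of Theorem \ref{thm:copr30} consists of a single remark: even reflections preserve the system of simple roots, hence the quantum reflections $T_{s_i}$ do not change the description of the Yangian as a Hopf superalgebra in terms of generators and relations --- that is, only the implication ($\Leftarrow$) of the ``coincide'' claim is addressed, with the isomorphism statement left to follow from Theorem \ref{thm:copr2} and composition of simple quantum reflections exactly as you describe. Your treatment of these two parts matches the paper's. Where you diverge is the converse ($\Rightarrow$), which the paper does not prove at all: your idea of descending to the quasiclassical limit via the correspondence principle (\ref{eq:cr}) and detecting the parity pattern of the simple roots in the cobracket is a sensible strategy, and your observation that one must exclude \emph{every} candidate Hopf identification, not merely the one induced by $T_{s_i}$, correctly isolates the genuine difficulty that the paper glosses over. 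Be aware, though, that your converse is still only a sketch: you have not exhibited the explicit invariant nor verified that it separates root systems lying in different Weyl-group orbits, so that direction remains incomplete --- but it is no less complete than what the paper itself offers.
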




\section{Drinfeld presentation of affine super Yangian }

Let $S_n$ be the symmetric group and let $\widehat{sl}(m|n)$ be associated with the indecomposable Cartan matrix $A=(a_{ij})_{i,j \in I}$ where $I$ is the set of vertices of the Dynkin diagram corresponding to $\widehat{sl}(m|n)$. We set $\{a,b\} := ab + (-1)^{p(a)p(b)}ba$. Note that elements of $I$ indexed by numbers $\{0,1,\ldots,m+n-1\}$, we will identify these two sets when we want to fix the order on the set of simple roots. Then following \cite{Ueda} we can define the Yangian of $\widehat{sl}(m|n)$ in distinguished realization.

Suppose $\Pi$ is an arbitrary system of roots of the special linear Kac-Moody superalgebra. We define affine super Yangian for the arbitrary realization of the affine special linear Kac-Moody superalgebra $\widehat{sl}(m|n,\Pi))= sl^{(1)}(m|n, \Pi)$, where $\Pi=\{\alpha_0,\alpha_1,\ldots,\alpha_{m+n-1}\}$ is a basis of simple roots and $p$ is the parity function defined on the root lattice. We define affine super Yangian as above with Cartan matrix $A={(a_{ij})}_{i,j \in I}$ which is defined by the arbitrary system of simple roots $\Pi$.
\begin{definition}
The Yangian $Y^D_{\hbar}(\widehat{sl}(m|n,\Pi))=\bar{Y}_\hbar(\widehat{sl}(m|n, \Pi))$ is a unital associative $\mathbb{C}[h]$-algebra generated by the elements $x_{\alpha_{i},r}^{\pm}$, $h_{\alpha_{i},r}$, for $i \in \{1,\ldots,m+n-1 \}$ or $\alpha_i \in \Pi$, and $r \in \mathbb{Z}_{\geq 0}$, subject to the relations
\begin{equation}
\label{rel:27}
    [h_{\alpha_{i},r}, h_{\alpha_{j},s}] = 0,
\end{equation}
\begin{equation}
\label{rel:28}
    [h_{\alpha_{i},0},x_{\alpha_{j},s}^{\pm}]=\pm a_{ij}x_{\alpha_{j},s}^{\pm},
\end{equation}
\begin{equation}
\label{rel:29}
    [x_{\alpha_{i},r}^{+},x_{\alpha_{j},s}^{-}]=\delta_{ij}h_{\alpha{i},r+s},
\end{equation}
\begin{equation}
\label{rel:30}
    [h_{\alpha_{i},r+1},x_{\alpha_{j},s}^{\pm}]-[h_{\alpha_{i},r},x_{\alpha_{j},s+1}^{\pm}]=\pm\frac{\hbar a_{ij}}{2}\{h_{\alpha_{i},r},x_{\alpha_{j},s}^{\pm}\},
\end{equation}
\begin{equation}
\label{rel:31}
    [x_{\alpha_{i},r+1}^{\pm},x_{\alpha_{j},s}^{\pm}]-[x_{\alpha_{i},r}^{\pm},x_{\alpha_{j},s+1}^{\pm}]=\pm \frac{ \hbar a_{ij}}{2}\{x_{\alpha_{i},r}^{\pm},x_{\alpha_{j},s}^{\pm}\},
\end{equation}
\begin{equation}
\label{rel:32}
\sum\limits_{\sigma_i \in S_n}[x_{\alpha_{i},r_{\sigma(i)}}^{\pm},[x_{\alpha_{i},r_{\sigma(2)}}^{\pm},\ldots,[x_{\alpha_{i},r_{\sigma(m)}}^{\pm},x_{\alpha_{j},s}^{\pm}]\ldots]]=0, \quad for \quad i\neq j \quad and \quad n=1-a_{ij},
\end{equation}
\begin{equation}
\label{rel:33}
  [x_{\alpha_{i},r},x_{\alpha_{i},s}]=0, \quad \text{for every odd root $\alpha_i$},
\end{equation}
\begin{equation}
\label{rel:34}
[[x_{\alpha_{i-1},r}^{\pm},x_{\alpha_{i},0}^{\pm}],[x_{\alpha_{i},0}^{\pm},x_{\alpha_{i+1},s}^{\pm}]]=0, \quad \text{for every odd root $\alpha_i$}.
\end{equation}
\end{definition}
Where $a_{ij}$ are elements of the Cartan matrix defined by the given system of simple roots $\Pi$.

Let $\tilde{h}_{\alpha_i,1} = h_{\alpha_i,1}-\frac{\hbar}{2}h_{\alpha_i,0}^2$.

We will show that the following algebra $Y^D_{\hbar}(\widehat{sl}(m|n,\Pi))=\bar{Y}_{\hbar}(\widehat{sl}(m|n))$ is isomorphic to $Y_{\hbar}(\widehat{sl}(m|n))$.

\begin{theorem} \label{thm_3.2}
Suppose $\widehat{sl}(m|n, \Pi) := sl^{(1)}(m|n, \Pi)$ is the Lie superalgebra of the $A^{(1)}(m|n)$-type with $m\neq n$ and $m,n\geq 2$. The affine super Yangian $Y_{\hbar}(\widehat{sl}(m|n,\Pi))$ is isomorphic to the associative superalgebra $Y^D_{\hbar}(\widehat{sl}(m|n,\Pi))$ generated by $x_{\alpha_{i},r}^{\pm}$, $h_{\alpha_{i},r}$, for $i \in \{1,\ldots,m+n-1 \}$ and $r \in  \mathbb{Z}_{\geq 0}$.
\end{theorem}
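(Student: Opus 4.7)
The plan is to build an isomorphism $\Psi : Y_\hbar(\widehat{sl}(m|n,\Pi)) \to Y^D_\hbar(\widehat{sl}(m|n,\Pi))$ by sending each minimalistic generator $x^{\pm}_{\alpha_i,r}$, $h_{\alpha_i,r}$ ($r\in\{0,1\}$) to the element of the same name in $Y^D_\hbar$, and then producing an inverse by realizing all higher Drinfeld generators $x^{\pm}_{\alpha_i,r}$, $h_{\alpha_i,r}$ ($r\geq 2$) as explicit words in the minimalistic generators. This follows the Levendorskii strategy for ordinary Yangians, extended to the super Kac--Moody setting by Ueda in the distinguished realization; my task is to explain why the strategy is insensitive to the choice of $\Pi$ as long as the Kac--Moody superalgebra is of type $A^{(1)}(m|n)$ with $m\neq n$, $m,n\geq 2$.

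First I would verify that $\Psi$ is well defined, i.e.\ that each minimalistic relation (\ref{rel:35})--(\ref{rel:43}) is a direct consequence of the Drinfeld relations (\ref{rel:27})--(\ref{rel:34}) specialized to $r,s\in\{0,1\}$. Relations (\ref{rel:35})--(\ref{rel:38}) are immediate from (\ref{rel:27})--(\ref{rel:29}); relations (\ref{rel:39}) and (\ref{rel:40}) follow from (\ref{rel:31}) and (\ref{rel:30}) at $r=s=0$ and $r=0$ respectively (for (\ref{rel:40}) one rewrites $[h_{\alpha_i,1},x^{\pm}_{\alpha_j,0}] - \tfrac{\hbar}{2}[h_{\alpha_i,0}^2, x^{\pm}_{\alpha_j,0}]$ and uses (\ref{rel:28})); the Serre-type relations (\ref{rel:41})--(\ref{rel:43}) coincide with the degree-$0$ specializations of (\ref{rel:32})--(\ref{rel:34}).

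The harder direction is to show that $\Psi$ is surjective and admits an inverse $\Psi^{-1}$ defined on the Drinfeld generators. The idea is to set recursively, inside $Y_\hbar(\widehat{sl}(m|n,\Pi))$,
\[
x^{\pm}_{\alpha_i,r+1} \;:=\; \pm a_{ij}^{-1}\bigl([\tilde h_{\alpha_j,1},x^{\pm}_{\alpha_i,r}] - \tfrac{\hbar a_{ji}}{2}\{h_{\alpha_j,0},x^{\pm}_{\alpha_i,r}\}\bigr)
\]
for any index $j$ with $a_{ij}\neq 0$, and $h_{\alpha_i,r+s}:=[x^{+}_{\alpha_i,r},x^{-}_{\alpha_i,s}]$. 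One must then prove three things: (a) these elements are independent of the auxiliary choice of $j$; (b) the full set of Drinfeld relations (\ref{rel:27})--(\ref{rel:34}) is satisfied by the elements so defined; (c) the resulting assignment is a two-sided inverse of $\Psi$. Part (a) requires a compatibility computation, and (b) is carried out by a double induction on the indices $r,s$ of the generators appearing in each relation, systematically reducing everything to the degree-one relations (\ref{rel:39}) and (\ref{rel:40}) together with the Serre-type identities.

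The main obstacle is the presence of odd simple roots $\alpha_i$ with $a_{ii}=0$. For even roots the canonical recursion uses $j=i$ (because $a_{ii}=2$), and one recovers Levendorskii's argument verbatim. For odd $\alpha_i$ this is impossible, and the recursion must use a neighbour $\alpha_{i\pm 1}$, which makes the independence statement (a) above non-trivial and requires the higher Serre relations (\ref{rel:43}) at all levels $r$. This is precisely why the hypotheses $m\neq n$ and $m,n\geq 2$ appear: the Dynkin diagram of $A^{(1)}(m-1|n-1)$ under those assumptions always provides at least one even neighbour for each odd node (inside every non-trivial subdiagram), which allows the inductive step to be performed uniformly. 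The degenerate low-rank cases $A^{(1)}(1|1)$, $A^{(1)}(0|n)$ must be excluded since the needed compatibility between two different choices of $j$ fails. Once independence and the inductive identities are established, a dimension/PBW argument — comparing the quasiclassical limit with $U(\widehat{sl}(m|n,\Pi)[u])$ on both sides — shows that $\Psi$ is an isomorphism of associative superalgebras.
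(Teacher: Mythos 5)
Your proposal follows essentially the same route as the paper: the paper likewise defines the higher Drinfeld generators inside the minimalistic Yangian by the recursion $x^{\pm}_{\alpha_i,r+1}=\pm a_{i+1,i}^{-1}[\tilde h_{\alpha_{i+1},1},x^{\pm}_{\alpha_i,r}]$, $h_{\alpha_i,r+1}=[x^{+}_{\alpha_i,r+1},x^{-}_{\alpha_i,0}]$, and then verifies the full list of Drinfeld relations by double induction on the degrees, importing the even-root cases from Levendorskii and Guay--Nakajima--Wendlandt and handling the higher odd Serre relation via a Cartan-matrix block determinant argument that is exactly where $m,n\geq 2$ and $m\neq n$ enter. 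One small slip worth fixing: since $\tilde h_{\alpha_j,1}=h_{\alpha_j,1}-\frac{\hbar}{2}h_{\alpha_j,0}^2$ already satisfies $[\tilde h_{\alpha_j,1},x^{\pm}_{\alpha_i,r}]=\pm a_{ji}\,x^{\pm}_{\alpha_i,r+1}$ with no correction term, the extra summand $-\frac{\hbar a_{ji}}{2}\{h_{\alpha_j,0},x^{\pm}_{\alpha_i,r}\}$ in your recursion double-counts the correction and should be dropped.
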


\section{Coproducts on Drinfeld affine super Yangian}

We induce the coproduct and the Weyl groupoid action on the Drinfeld affine super Yangian using the isomorphism constructed above. As a corollary of the results proved above, we obtain the following theorems.

\begin{theorem} \label{thm:copr3}
Drinfeld super Yangians $Y^D_{\hbar}(\widehat{sl}(m|n,\Pi))$ and   $Y^D_{\hbar}(\widehat{sl}(m|n,\Pi'))$  endowed with the coproducts defined above are isomorphic in the category of Hopf superalgebras.
\end{theorem}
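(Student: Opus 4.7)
The plan is to deduce the theorem from Theorem \ref{thm:copr30} by transport of structure along the isomorphism of Theorem \ref{thm_3.2}. Denote by $\Phi_\Pi : Y^D_\hbar(\widehat{sl}(m|n,\Pi)) \to Y_\hbar(\widehat{sl}(m|n,\Pi))$ and $\Phi_{\Pi'} : Y^D_\hbar(\widehat{sl}(m|n,\Pi')) \to Y_\hbar(\widehat{sl}(m|n,\Pi'))$ the associative superalgebra isomorphisms provided by Theorem \ref{thm_3.2}. The recipe by which the Hopf superalgebra structure on the Drinfeld presentation was introduced at the start of this section is that the comultiplication, counit and antipode on $Y^D_\hbar(\widehat{sl}(m|n,\Pi))$ are the pullbacks of those on $Y_\hbar(\widehat{sl}(m|n,\Pi))$ along $\Phi_\Pi$; in particular,
\[
\Delta^D_\Pi \;=\; (\Phi_\Pi^{-1} \otimes \Phi_\Pi^{-1}) \circ \Delta_\Pi \circ \Phi_\Pi,
\]
and analogously for $\Pi'$. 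Consequently $\Phi_\Pi$ and $\Phi_{\Pi'}$ are Hopf superalgebra isomorphisms by construction.

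Now Theorem \ref{thm:copr30} supplies a Hopf superalgebra isomorphism
\[
U : Y_\hbar(\widehat{sl}(m|n,\Pi)) \longrightarrow Y_\hbar(\widehat{sl}(m|n,\Pi')).
\]
I propose the composition
\[
T \;:=\; \Phi_{\Pi'}^{-1} \circ U \circ \Phi_\Pi \; : \; Y^D_\hbar(\widehat{sl}(m|n,\Pi)) \longrightarrow Y^D_\hbar(\widehat{sl}(m|n,\Pi'))
\]
as the required isomorphism. That $T$ is an isomorphism of associative superalgebras is clear, being a composition of three such isomorphisms. The coalgebra compatibility would then reduce to a short diagram chase: expanding both sides via the pullback identities yields
\begin{align*}
\Delta^D_{\Pi'} \circ T
&= (\Phi_{\Pi'}^{-1} \otimes \Phi_{\Pi'}^{-1}) \circ \Delta_{\Pi'} \circ U \circ \Phi_\Pi \\
&= (\Phi_{\Pi'}^{-1} \otimes \Phi_{\Pi'}^{-1}) \circ (U \otimes U) \circ \Delta_\Pi \circ \Phi_\Pi \\
&= (T \otimes T) \circ \Delta^D_\Pi,
\end{align*}
using that $U$ intertwines $\Delta_\Pi$ and $\Delta_{\Pi'}$, and after inserting $\Phi_\Pi \circ \Phi_\Pi^{-1} = \mathrm{id}$ in the middle of the last line. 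Compatibility with counits and antipodes follows by an identical calculation, since all three ingredients $\Phi_\Pi$, $\Phi_{\Pi'}$, $U$ respect both.

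The content of the theorem therefore lies entirely in its two inputs. The first is Theorem \ref{thm_3.2}, which ensures that the Drinfeld presentation is isomorphic as an associative superalgebra to the minimalistic one and hence that the pullback formula for $\Delta^D_\bullet$ is well defined and consistent with the Drinfeld relations \eqref{rel:27}--\eqref{rel:34}. The second is Theorem \ref{thm:copr30}, which supplies the Hopf-algebraic isomorphism at the minimalistic level; this in turn rests on the explicit quantum (odd) reflections $T_{s_i}$ built in Section 4 and on the verification, carried out as part of Theorems \ref{thm:copr1}--\ref{thm:copr2}, that these reflections intertwine the different coproducts on the level of the generators $h_{i,1}$ and $x^\pm_{i,1}$. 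I expect this last verification — the genuinely computational step $(T_{s_i} \otimes T_{s_i})\Delta = \Delta' T_{s_i}$ for odd $\alpha_i$ — to be the main obstacle in the overall development; given it, Theorem \ref{thm:copr3} is a formal consequence via the diagram chase above.
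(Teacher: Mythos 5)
Your proposal matches the paper's own argument: the paper likewise defines the coproduct on $Y^D_{\hbar}(\widehat{sl}(m|n,\Pi))$ by transporting the minimalistic coproduct along the isomorphism of Theorem \ref{thm_3.2} (via inductively defined generators $x^{\pm}_{i,k+1}$, $h_{i,k+1}$) and then obtains Theorem \ref{thm:copr3} as a formal corollary of Theorems \ref{thm:copr1}--\ref{thm:copr30}. Your explicit diagram chase simply spells out what the paper leaves implicit, and your assessment of where the real work lies (the intertwining computation for odd reflections) is accurate.
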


\vspace{0.5cm}

\section{Proofs of the main results}

\subsection{Proof of Theorem \ref{thm:3.3}}

Since all elements of the groupoid $\hat{W}$ can be represented as the product of simple reflections $s_{\alpha_i}$. It is enough to check the isomorphism for every simple reflection $T_{s_{\alpha_i}}:Y_{\hbar}(\widehat{sl}(m|n,\Pi))\rightarrow Y_{\hbar}(\widehat{sl}(m|n,\Pi_1)$.
\begin{lemma}
\label{lem:9}
There exists an isomorphism of superalgebras $\widehat{sl}(m|n,\Pi)\rightarrow \widehat{sl}(m|n, \Pi_1)$ where map $\Pi\rightarrow \Pi_1$ induced by simple reflection $s_{\alpha_2}:\Pi\rightarrow \Pi_1$.\\
\end{lemma}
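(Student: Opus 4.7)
The plan is to define the candidate isomorphism $\Phi_i : \widehat{sl}(m|n,\Pi) \rightarrow \widehat{sl}(m|n,\Pi_1)$ on Chevalley generators using the map $L_i$ introduced in the Preliminaries, and then verify that the defining relations (\ref{eq:2})--(\ref{eq:4}) applied to the images hold in the target algebra with Cartan matrix $A_1$ associated to $\Pi_1$. Since the generators $\{x^{\pm}_{\alpha_j}, h_{\alpha_j}\}_{\alpha_j \in \Pi}$ satisfy a Serre-type presentation, it suffices to exhibit elements in $\widehat{sl}(m|n,\Pi_1)$ satisfying the analogous relations relative to $A_1$; universal freeness then gives a unique homomorphism, and the corresponding homomorphism in the opposite direction (built from $s_i^{-1}$) supplies the inverse.

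I will split into two cases. For an even simple reflection $s_{\alpha_i}$, I take $\Phi_i = L_i$ given by inner conjugation $a \mapsto \bar{s}_i a \bar{s}_i^{-1}$, where $\bar{s}_i = \exp(x^-_i)\exp(-x^+_i)\exp(x^-_i)$. Because this is a well-defined inner automorphism of the underlying Lie superalgebra, it preserves \emph{all} supercommutators, so there are only two things to check: (i) $L_i$ sends the Chevalley system of $\Pi$ onto (a signed rescaling of) the Chevalley system of $\Pi_1$, which is immediate from $L_i(x^\pm_{\alpha_j}) = \pm[x^\pm_{\alpha_i}, x^\pm_{\alpha_j}]$ when $a_{ij} = -1$ and $L_i(x^\pm_{\alpha_j}) = x^\pm_{\alpha_j}$ when $a_{ij} = 0$; and (ii) the matrix entries $a'_{jk} = (s_i(\alpha_j), s_i(\alpha_k))$ of $\Pi_1$ are the ones governing the image relations, which follows from invariance of the bilinear form under $s_i$.

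For an odd simple reflection $s_{\alpha_i}$, the map $L_i$ from the Preliminaries gives $\Phi_i(x^\pm_{\alpha_i}) = -x^\pm_{s_i(\alpha_i)}$, $\Phi_i(x^\pm_{\alpha_{i\pm 1}}) = \pm[x^\pm_{s_i(\alpha_i)}, x^\pm_{s_i(\alpha_{i\pm 1})}]$, and $\Phi_i(x^\pm_{\alpha_j}) = x^\pm_{s_i(\alpha_j)}$ otherwise, together with $\Phi_i(h_{\alpha_j}) := [\Phi_i(x^+_{\alpha_j}), \Phi_i(x^-_{\alpha_j})]$. The core of the proof is verifying that these images satisfy (\ref{eq:2})--(\ref{eq:4}) with the new Cartan matrix $A_1$. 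This reduces to a finite case analysis on whether the indices $j, k$ lie in $\{i-1, i, i+1\}$ or away from the reflected node, using the Jacobi identity, the odd Serre relation (\ref{eq:3}) at $\alpha_i$, and the quartic relation (\ref{eq:4}). The entries $a'_{jk}$ rearrange in exactly the right way so that the Serre relation (\ref{eq:20}) on the image coincides with the one already present in $\widehat{sl}(m|n,\Pi_1)$. Bijectivity follows since $s_i$ is an involution on the root system, so composing $\Phi_i$ with the analogously defined morphism $\widehat{sl}(m|n,\Pi_1) \rightarrow \widehat{sl}(m|n,\Pi)$ restores each generator up to an invertible scalar.

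The main obstacle will be the computation in the odd case: verifying the Serre relations (\ref{eq:20}) and the quartic odd relation (\ref{eq:4}) for the new generators requires expanding nested supercommutators of expressions like $[x^\pm_{s_i(\alpha_i)}, x^\pm_{s_i(\alpha_{i+1})}]$ and exploiting $(\alpha_i, \alpha_i) = 0$ together with $s_i(\alpha_i) = -\alpha_i$ inside the target algebra. Once this elementary but careful bracket bookkeeping is done, everything else --- well-definedness on the Cartan generators, matching of (\ref{rel:5})-type support conditions, and invertibility --- follows by transport of structure.
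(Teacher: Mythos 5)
Your proposal follows essentially the same route as the paper: define the map on Chevalley generators by the Weyl-groupoid formulas (treating even reflections as inner automorphisms and odd ones via the explicit bracket formulas), verify the Serre-type defining relations for the images by a case analysis using the Jacobi identity, the odd Serre relation and the quartic relation, and obtain the inverse from the reverse reflection --- which is precisely what the paper carries out in its Lemmas on the map $\phi$ and then composes over simple reflections. The only difference is one of direction (the paper writes the map from $\widehat{sl}(m|n,\Pi_1)$ to $\widehat{sl}(m|n,\Pi)$) and of detail, since the paper executes the bracket computations you defer.
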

We denote the root $\alpha_i$ by $\alpha_2$ to simplify the notation. Since the superalgebra $\widehat{sl}(m|n,\Pi)$ in the distinguished realization is of type $A^{(1)}(m|n)$, the system of simple roots $\Delta$ remains unchanged except for the roots $\alpha_1$, $\alpha_2$, and $\alpha_3$.

 By (\ref{rel:5})
\begin{enumerate}
    \item $s_{\alpha_2}:\alpha_1 \rightarrow \alpha_1+\alpha_2=\beta_1$,
    \item $s_{\alpha_2}:\alpha_2 \rightarrow -\alpha_2=\beta_2$,
    \item $s_{\alpha_2}:\alpha_3 \rightarrow \alpha_2+\alpha_3=\beta_3$.
\end{enumerate}

If the root $\alpha_2$ is even, the root system $\Pi$ maps to itself under $s_{\alpha_2} : \Pi \rightarrow \Pi$. Since the root $\alpha_2$ is even, the parities of the roots $\alpha_1$ and $\alpha_3$ do not change; thus, the image of the parity function on the simple roots remains unchanged. The mapping of root systems induces a mapping of superalgebras $\hat{s}_{\alpha_2} : \widehat{sl}(m|n,\Pi) \rightarrow \widehat{sl}(m|n,\Pi)$. We introduce it as in Kodera's work \cite{Kodera} :
\[
\hat{s}_{\alpha_i}(x_{\alpha_j}^{\pm})=
\begin{cases}
         -x_{\alpha_i}^{\mp} & if \quad a_{i,j}=2,\\
        \pm[x_{\alpha_i}^{\pm},x_{\alpha_j}^{\pm}], & if \quad a_{i,j}=-1,\\
         x_{\alpha_j}^{\pm} & if \quad a_{i,j}=0,\\
\end{cases}
\quad
\hat{s}_{\alpha_i}(h_{\alpha_j,0})=
\begin{cases}
    -h_{\alpha_i}, & if \quad a_{i,j}=2,\\
    h_{\alpha_i}+h_{\alpha_j}, & if \quad a_{i,j}=-1, \\
    h_{\alpha_j}, & if \quad a_{i,j}=0.
\end{cases}
\]

We introduce the map of Yangians $T_{s_{\alpha_i}}$ that coincides on the generators of zero order with the map $\hat{s}_{\alpha_2}$.
We define the map of Yangians $s_{\alpha_i}$ for generators of positive order as in the Kodera work \cite{Kodera}:
\[
T_{s_{\alpha_i}}(x_{\alpha_j,1}^{\pm})=
\begin{cases}
    -x_{\alpha_i,1}^{\mp}+\frac{\hbar}{2}\{h_{\alpha_i,0},x_{\alpha_i,0}^{\mp}\}, & if \quad a_{i,j}=2,\\
    \pm[x_{\alpha_i,0}^{\pm},x_{\alpha_j,1}^{\pm}], & if \quad a_{i,j}=-1,\\
     x_{\alpha_j,1}^{\pm}, & if \quad a_{i,j}=0.\\
\end{cases} \quad
\]

\[ T_{s_{\alpha_i}}(\tilde{h}_{\alpha_j,1})=
\begin{cases}
    -\tilde{h}_{\alpha_j,1}-\hbar\{x_{\alpha_i,0}^{+},x_{\alpha_i,0}^{-}\},& if \quad a_{i,j}=2,\\
    \tilde{h}_{\alpha_j,1}+\tilde{h}_{\alpha_i,1}+\frac{\hbar}{2}\{x_{\alpha_i,0}^{+},x_{\alpha_i,0}^{-}\}, & if \quad a_{i,j}=-1, \\
    \tilde{h}_{\alpha_j,1},& if \quad a_{i,j}=0.\\
\end{cases}
\]

We introduce the map $\phi: \widehat{sl}(m|n,\Pi_1)\rightarrow \widehat{sl}(m|n, \Pi)$, which is induced by an element of the Weyl groupoid.  We show that this map is a Lie superalgebra isomorphism. This map is a composition of the maps that are induced by reflection. It suffices to prove that this mapping is an isomorphism in the case when it is induced by one reflection. It is well known that this map is isomorphism, when it is induced by even reflection.  Then it is sufficient to prove this assertion in the case when $\phi=\hat{s}^{-1}_{\alpha_i}: \widehat{sl}(m|n, \Pi_1)\rightarrow \widehat{sl}(m|n, \Pi)$ is induced by odd reflection $s_{i}$. In this case the map $\phi$ it is defined in the root generators by the following formulas:

\begin{enumerate}
    \item $\phi(x_{\beta_{i},0}^{\pm})=\pm[x_{\alpha_{i},0}^{\pm},x_{\alpha_{i+1},0}^{\pm}]$,
    \item $\phi(x_{\beta_{i},0}^{\pm})=x_{\alpha_{i+1},0}^{\mp}$,
    \item $\phi(x_{\beta_{i+2},0}^{\pm})=\pm[x_{\alpha_{i+2},0}^{\pm},x_{\alpha_{i+1},0}^{\pm}]$,
    \item $\phi(h_{\beta_{i},0})=h_{\alpha_{i},0}+h_{\alpha_{i+1},0}$,
    \item $\phi(h_{\beta_{i+2},0})=h_{\alpha_{i+2},0} + h_{\alpha_{i+1},0}$,
    \item $\phi(h_{\beta_{i+2},0})=-h_{\alpha_{i+2},0}$.
\end{enumerate}

We can define quantum reflections with respect to the odd simple root by the following formula.

\[
T_{s_{\alpha_i}}(x_{\alpha_j,1}^{\pm})=
\begin{cases}
        \pm[x_{\beta_i,0}^{\pm},x_{\beta_j,1}^{\pm}], & if \quad a_{i,j}=-1,\\
        x_{-\beta_i,1}^{\pm}, & if \quad i = j, a_{i,j}=0.\\
     x_{\alpha_j,1}^{\pm}, & if \quad i\neq j, a_{i,j}=0.\\
\end{cases} \quad
\]

\[ T_{s_{\alpha_i}}(\tilde{h}_{\alpha_j,1})=
\begin{cases}
        \tilde{h}_{\beta_j,1}+\tilde{h}_{\beta_i,1}+\frac{\hbar}{2}\{x_{\beta_i,0}^{+},x_{\beta_i,0}^{-}\}, & if \quad a_{i,j}=-1, \\
 \tilde{h}_{-\beta_j,1},& if \quad i=j.\\
    \tilde{h}_{\alpha_j,1},& if \quad i\neq j, a_{i,j}=0.\\
\end{cases}
\]

\begin{lemma}
\label{lemma8.2}
Relations (\ref{in:6}), (\ref{in:7})  hold for map $\phi: \widehat{sl}(m|n,\Pi_1)\rightarrow \widehat{sl}(m|n, \Pi)$.
\end{lemma}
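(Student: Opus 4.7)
The plan is to verify the defining relations (\ref{in:6}) and (\ref{in:7}) of $\widehat{sl}(m|n, \Pi_1)$ one at a time on the images of the generators under $\phi$, working inside $\widehat{sl}(m|n, \Pi)$. Since $\phi$ acts as the identity (up to reindexing $\beta_j \mapsto \alpha_j$) on all generators at vertices of distance $\geq 2$ from the reflected vertex, every relation involving only such distant generators transfers automatically. The work reduces to the finite list of relations whose indices lie in the window $\{i, i+1, i+2\}$ on which $\phi$ is nontrivial.

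First, I would check the Cartan data: $[\phi(h_{\beta_k}), \phi(h_{\beta_l})] = 0$ is immediate since $\phi(h_{\beta_k})$ lies in $\mathfrak{h}$; then $[\phi(h_{\beta_k}), \phi(x_{\beta_l}^{\pm})] = \pm a'_{kl}\,\phi(x_{\beta_l}^{\pm})$ is verified by expanding both sides via the defining relations of $\widehat{sl}(m|n,\Pi)$, using that $\phi(h_{\beta_k})$ is a sum of $h_{\alpha}$'s and $\phi(x_{\beta_l}^{\pm})$ is a root vector of root $\sum c_j \alpha_j$. The identity then reduces to the pairing $(\beta_k, \beta_l) = (s_i(\alpha_k), s_i(\alpha_l))$, which equals $(\alpha_k, \alpha_l)$ because $s_i$ is an orthogonal transformation. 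Next, the relation $[\phi(x_{\beta_k}^+), \phi(x_{\beta_l}^-)] = \delta_{kl}\,\phi(h_{\beta_k})$ is verified case by case for $k, l \in \{i, i+1, i+2\}$: the diagonal cases follow from $[x_{\alpha_{i+1}}^-, x_{\alpha_{i+1}}^+] = -h_{\alpha_{i+1}}$ and from graded Jacobi expansions such as $[[x_{\alpha_i}^\pm, x_{\alpha_{i+1}}^\pm], [x_{\alpha_{i+1}}^\mp, x_{\alpha_i}^\mp]]$, while the off-diagonal cases collapse to $0$ because the root $\beta_k - \beta_l \notin \Delta \cup \{0\}$ in the relevant subdiagram.

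Second, I would address the Serre-type and odd-root relations (\ref{in:7}), which I expect to be the main obstacle. Their difficulty comes from two sources: (a) the Cartan entries $a'_{kl}$ of $\Pi_1$ differ from $a_{kl}$, so the exponent $1 + |a'_{kl}|$ in the Serre relation is shifted; and (b) the odd-reflection permutes the parity pattern, so the odd Serre relation $[x_{\beta_t}^\pm, x_{\beta_t}^\pm] = 0$ and the quartic relation $[[x_{\beta_{t-1}}^\pm, x_{\beta_t}^\pm], [x_{\beta_t}^\pm, x_{\beta_{t+1}}^\pm]] = 0$ must be checked at new odd vertices. The strategy is to substitute $\phi(x_{\beta_k}^\pm)$ as a simple commutator of root vectors in $\widehat{sl}(m|n, \Pi)$, expand via the graded Jacobi identity, and use the known relations (\ref{in:6})–(\ref{in:7}) in the source algebra. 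For the quartic relation the key observation is that after expansion the bracket lands in a root space $\mathfrak{g}^{\gamma}$ of multiplicity $\leq 1$, and a direct weight computation shows $\gamma$ is not a root (or the element equals the corresponding original quartic which vanishes).

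The principal technical hurdle will be the consistent bookkeeping of the graded signs and the parity relabeling: the odd reflection simultaneously changes the parity of $\alpha_i, \alpha_{i+1}, \alpha_{i+2}$, so several generators switch between being even and odd, and every application of Jacobi must carry the Koszul sign. I would handle this by fixing one explicit parity table for $\Pi$ and $\Pi_1$ at the outset, treating the four subcases by whether $\alpha_{i-1}$ and $\alpha_{i+1}$ are odd or even, and verifying the quartic relation as a single identity $[[\phi(x_{\beta_{t-1}}^\pm), \phi(x_{\beta_t}^\pm)], [\phi(x_{\beta_t}^\pm), \phi(x_{\beta_{t+1}}^\pm)]] = 0$ by reducing it, via repeated Jacobi, to the original quartic relation at the odd vertex of $\Pi$ (or to a commutator of two vectors whose roots are orthogonal). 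Once all the listed relations are checked, $\phi$ extends to a well-defined Lie superalgebra homomorphism, and since its composition with the inverse odd reflection is the identity on generators, it is an isomorphism.
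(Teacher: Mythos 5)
Your proposal is correct and follows essentially the same route as the paper: the paper also reduces the verification to the generators affected by the reflection and checks each family of defining relations (Cartan commutativity, the $[h,x]$ weights, $[x^+,x^-]=\delta h$, the Serre relations, and the quartic odd-root relation) by substituting the explicit formulas for $\phi$ and expanding with the graded Jacobi identity against the relations of the source algebra (its Lemmas \ref{lemma8.3}--\ref{lemma8.7}). Your added remarks on the orthogonality of $s_i$ and the explicit parity bookkeeping are sensible refinements but do not change the method.
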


\begin{lemma}
\label{lemma8.3}
The following equality is satisfied 
$$[\phi(h_{\beta_{i},0}),\phi(h_{\beta_{j},0})]=0. $$
\end{lemma}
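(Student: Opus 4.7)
My plan for Lemma \ref{lemma8.3} is a direct case analysis leveraging the abelianness of the Cartan subalgebra of $\widehat{sl}(m|n, \Pi)$. First I would assemble the complete list of values $\phi(h_{\beta_k, 0})$ for all $k \in \{0, 1, \ldots, m+n-1\}$. Items (4)--(6) above cover the three nontrivial cases (the reflected node $\alpha_i$ and its two neighbors $\alpha_{i-1}, \alpha_{i+1}$), and on the remaining indices $\phi$ acts by $h_{\beta_k, 0} \mapsto h_{\alpha_k, 0}$, because the odd reflection $s_{\alpha_i}$ leaves simple roots outside this triple pointwise fixed. The essential observation coming out of this list is that, in every case, $\phi(h_{\beta_k, 0})$ is a $\mathbb{Z}$-linear combination of Chevalley Cartan generators $h_{\alpha_l, 0}$ of $\widehat{sl}(m|n, \Pi)$; in particular it lies in the Cartan subalgebra $\mathfrak{h} \subset \widehat{sl}(m|n, \Pi)$.

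Next I would invoke the defining relation (\ref{eq:2}), which asserts $[h_{\alpha_k, 0}, h_{\alpha_l, 0}] = 0$ for all $k, l$. By bilinearity of the bracket, any commutator of two elements of $\mathfrak{h}$ vanishes, so expanding
\[
[\phi(h_{\beta_i, 0}), \phi(h_{\beta_j, 0})]
\]
as a $\mathbb{Z}$-linear combination of brackets $[h_{\alpha_k, 0}, h_{\alpha_l, 0}]$ yields zero, which is the claim.

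I do not expect any serious obstacle. The content of the lemma is a bookkeeping statement verifying that $\phi$, restricted to the Cartan subalgebra of $\widehat{sl}(m|n, \Pi_1)$, maps into the Cartan subalgebra of $\widehat{sl}(m|n, \Pi)$, after which commutativity is automatic. The only point requiring attention is consistency of the implicit convention $\phi(h_{\beta_k, 0}) = h_{\alpha_k, 0}$ for $k$ not adjacent to $i$, but this is immediate from the fact that $s_{\alpha_i}$ acts trivially on $\alpha_k$ for such $k$. The role of this lemma in the paper is to supply the Cartan commutation relation among the defining relations of $\widehat{sl}(m|n, \Pi_1)$ required by Lemma \ref{lemma8.2} to conclude that $\phi$ is a Lie superalgebra homomorphism.
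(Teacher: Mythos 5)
Your proposal is correct and follows essentially the same route as the paper: substitute the explicit values of $\phi$ on the Cartan generators, note that each image is an integer linear combination of the $h_{\alpha_l,0}$, and conclude by the defining relation $[h_{\alpha_k,0},h_{\alpha_l,0}]=0$ of (\ref{in:6}). The paper merely spells this out case by case for the pairs involving $\beta_i$ and $\beta_{i+1}$, whereas you phrase it uniformly; the content is identical.
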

\begin{proof}
  $[\phi(h_{\beta_{i},0}),\phi(h_{\beta_{i+1},0})]= [h_{\alpha_{i},0}+h_{\alpha_{i+1},0},-h_{\alpha_{i+1},0}]$. Using the corresponding relation (\ref{in:6}) for the realization $\hat{sl}(m|n, \Pi_1)$ we obtain the following: 
$[h_{\alpha_{i},0}+h_{\alpha_{i+1},0},-h_{\alpha_{i+1},0}]=0.$

The relations $[\phi(h_{\beta_{i},0}),h_{\beta_{i},0}]=[h_{\alpha_{i},0}+h_{\alpha_{i+1},0},h_{\alpha_{i},0}+h_{\alpha_{i+1},0}]=0$ and $[\phi(h_{\beta_{i+1},0}),\phi(h_{\beta_{i+1},0})]=[-h_{\alpha_{i+1},0},-h_{\alpha_{i+1},0}]=0$ can be proved by a similar algorithm.
\end{proof}
\begin{lemma}
\label{lemma8.4}
The following equality is hold
$$[\phi(h_{\beta_{i},0}),\phi(x_{\beta_{j},0}^{\pm})]=\pm a_{ij}\phi(x_{\beta_{j},0}^{\pm}).$$
\end{lemma}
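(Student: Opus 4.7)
The plan is to verify the identity case by case, based on the relative position of the indices $i,j$ with respect to the odd reflection root. To keep notation clean I rename the reflection index to $k$, so that $\beta_k=-\alpha_k$, $\beta_{k\pm 1}=\alpha_{k\pm 1}+\alpha_k$, and $\beta_\ell=\alpha_\ell$ for $|\ell-k|\geq 2$. The target identity
\[
[\phi(h_{\beta_i,0}),\phi(x_{\beta_j,0}^{\pm})]=\pm a_{ij}\,\phi(x_{\beta_j,0}^{\pm})
\]
is to be read with $a_{ij}$ being the $(i,j)$-entry of the Cartan matrix of $\Pi_1$, since $\phi$ is to be the isomorphism $\widehat{sl}(m|n,\Pi_1)\to\widehat{sl}(m|n,\Pi)$. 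The only tools needed are the explicit formulas for $\phi$ listed just before Lemma \ref{lemma8.2}, the commutativity established in Lemma \ref{lemma8.3}, and the defining relations (\ref{in:6}) in $\widehat{sl}(m|n,\Pi)$.

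First I would dispose of all pairs $(i,j)$ in which at least one index lies outside the window $\{k-1,k,k+1\}$. In those cases $\phi$ acts on the corresponding generator as the identity (up to relabelling), and the $(i,j)$-block of the Cartan matrices of $\Pi$ and $\Pi_1$ coincides. The identity then follows directly from $[h_{\alpha_i,0},x^{\pm}_{\alpha_j,0}]=\pm a_{ij}x^{\pm}_{\alpha_j,0}$. This reduces the lemma to the finite list of subcases with both $i$ and $j$ in $\{k-1,k,k+1\}$.

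Next I would process these remaining subcases using the explicit formulas, the super-Leibniz rule, and Lemma \ref{lemma8.3}. A representative diagonal case is $i=j=k-1$:
\[
[\phi(h_{\beta_{k-1},0}),\phi(x^{\pm}_{\beta_{k-1},0})]=\pm[h_{\alpha_{k-1},0}+h_{\alpha_k,0},[x^{\pm}_{\alpha_{k-1},0},x^{\pm}_{\alpha_k,0}]].
\]
Expanding via (\ref{in:6}) yields a scalar multiple of $[x^{\pm}_{\alpha_{k-1},0},x^{\pm}_{\alpha_k,0}]$ whose coefficient is $a_{k-1,k-1}+a_{k-1,k}+a_{k,k-1}+a_{k,k}=(\beta_{k-1},\beta_{k-1})$, i.e.\ the required Cartan matrix entry of $\Pi_1$ (here equal to $0$, since $\beta_{k-1}$ becomes odd). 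The case $i=j=k$ similarly reduces to $-[h_{\alpha_k,0},x^{\mp}_{\alpha_k,0}]=\mp a_{kk}x^{\mp}_{\alpha_k,0}=0$, consistent with $a_{kk}=0$. The off-diagonal subcases $(k-1,k)$, $(k,k-1)$, $(k,k+1)$, $(k+1,k)$ are handled by the same pattern: expand the bracket, apply (\ref{in:6}) to each term, and collect coefficients; the resulting scalar always matches the inner product $(\beta_i,\beta_j)$ computed in $\Pi_1$.

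The main obstacle is the bookkeeping for the cross subcase $(i,j)=(k-1,k+1)$ (and its transpose), where the Cartan matrix of $\Pi_1$ has a nonzero entry even though $a_{k-1,k+1}=0$ in $\Pi$. Here one must carefully track the signs produced by the piecewise definition of $\phi$ on the root generators, together with the parity of $\alpha_k$ (which changes role after the reflection), and verify that the resulting coefficient equals $(\beta_{k-1},\beta_{k+1})=(\alpha_{k-1},\alpha_k)+(\alpha_k,\alpha_{k+1})+(\alpha_k,\alpha_k)$, the last term vanishing because $\alpha_k$ is odd. Once all subcases are matched against the appropriate entries of the Cartan matrix of $\Pi_1$, the lemma is complete.
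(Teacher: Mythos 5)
Your proposal is correct and follows essentially the same route as the paper: a direct case-by-case verification using the explicit formulas for $\phi$ on the Chevalley generators together with the defining relations (\ref{in:6}), matching the resulting coefficients against the Cartan matrix entries of $\Pi_1$ (e.g.\ the diagonal case giving $2-1-1+0=0$ because $\beta$ becomes odd). If anything, your treatment is slightly more systematic than the paper's, which only works through the subcases with indices adjacent to the reflected root and leaves the distant and cross cases implicit.
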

\begin{proof}
$[\phi(h_{\beta_{i},0}),\phi(x_{\beta_{i},0}^{\pm})]=[h_{\beta_{i},0}+h_{\beta_{i+1},0},[x_{\beta_{i},0}^{\pm},x_{\beta_{i+1},0}^{\pm}]]$.  Using the corresponding relation (\ref{in:6}) for the realization $\widehat{sl}(m|n, \Pi_1)$ we obtain the following: 
\[
[h_{\beta_{i},0}+h_{\beta_{i+1},0},[x_{\beta_{i},0}^{\pm},x_{\beta_{i+1},0}^{\pm}]]=  \pm2[x_{\alpha_{i},0}^{\pm},x_{\alpha_{i+1},0}^{\pm}]\mp[x_{\alpha_{i},0}^{\pm},x_{\alpha_{i+1},0}^{\pm}]  \mp[x_{\alpha_{i},0}^{\pm},x_{\alpha_{i+1},0}^{\pm}]=0
\]
 $[\phi(h_{\beta_{i+1},0}),\phi(x_{\beta_{i},0}^{\pm})] = [-h_{\alpha_{i+1},0},[x_{\alpha_{i},0}^{\pm},x_{\alpha_{i+1},0}^{\pm}]]$. Using the corresponding relation (\ref{in:6}) for the realization $\widehat{sl}(m|n, \Pi_1)$ we obtain the following:
 \[
 [-h_{\alpha_{i+1},0},[x_{\alpha_{i},0}^{\pm},x_{\alpha_{i+1},0}^{\pm}]]=[x_{\alpha_{i},0}^{\pm},x_{\alpha_{i+1},0}^{\pm}]=\phi(x_{\beta_{i},0}^{\pm}).
 \]
Also we have

 $[\phi(h_{\beta_{i+1},0}),\phi(x_{\alpha_{i+1},0}^{\pm})]=[-h_{\alpha_{i+1},0},x_{\beta_{i+1},0}^{\mp}]=0,$ 
 $[\phi(h_{\beta_{i},0}),\phi(x_{\beta_{i+1},0}^{\pm})]=[h_{\alpha_{i},0}+h_{\alpha_{i+1},0},x_{\alpha_{i+1},0}^{\mp})]$.  Using the corresponding relation (\ref{in:6}) for the realization $\widehat{sl}(m|n, \Pi_1)$ we obtain the following:
 \[
[h_{\alpha_{i},0}+h_{\alpha_{i+1},0},x_{\alpha_{i+1},0}^{\mp})]=\mp x_{\alpha_{i+1},0}^{\mp}=-\phi(x_{\beta_{i+1},0}^{\pm}).
 \]
 \end{proof}

 \begin{lemma}
 \label{lemma8.5}
We have the following relation
     $$[\phi(x_{\beta_{i},0}^{-}),\phi(x_{\beta_{j},0}^+)]=\phi(\delta_{ij}h_{\beta_{j},0}). $$
 \end{lemma}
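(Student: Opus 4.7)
The strategy is to verify the Chevalley commutation $[\phi(x_{\beta_i,0}^-), \phi(x_{\beta_j,0}^+)] = \delta_{ij}\phi(h_{\beta_j,0})$ by a case analysis based on the position of $i,j$ relative to the three indices $\{k-1,k,k+1\}$ of roots affected by the odd reflection $s_{\alpha_k}$ (here $k$ denotes the index of the odd root about which we reflect, playing the role of $i+1$ in the paper's local notation). Recall that $\beta_l = \alpha_l$ and $\phi(x_{\beta_l,0}^{\pm}) = x_{\alpha_l,0}^{\pm}$ for every $l \notin \{k-1,k,k+1\}$, whereas $\beta_{k-1}=\alpha_{k-1}+\alpha_k$, $\beta_k=-\alpha_k$, and $\beta_{k+1}=\alpha_k+\alpha_{k+1}$ are the three new odd simple roots in $\Pi_1$.

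If both $i,j$ lie outside $\{k-1,k,k+1\}$, then $\phi$ acts as the identity on the relevant generators and the desired relation is immediately (\ref{rel:36}) for $\widehat{sl}(m|n,\Pi)$. If exactly one of $i,j$ lies in $\{k-1,k,k+1\}$, then necessarily $i\neq j$, the right-hand side is zero, and expanding the left-hand side by super-Jacobi reduces it to a sum of brackets of the form $[x_{\alpha_p}^+, x_{\alpha_q}^-]$ with $p\neq q$, together with occurrences of $[x_{\alpha_k}^\pm, x_{\alpha_k}^\pm]$ whenever the non-trivial index is $k-1$ or $k+1$; all such terms vanish by (\ref{rel:36}) and (\ref{eq:3}).

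For the nine sub-cases with $i,j\in\{k-1,k,k+1\}$, the off-diagonal ones likewise collapse after a single super-Jacobi expansion, each reducing to either $[x_{\alpha_p}^+, x_{\alpha_q}^-]=0$ ($p\neq q$) or $[x_{\alpha_k}^\pm, x_{\alpha_k}^\pm]=0$. The diagonal case $i=j=k$ is a one-line verification from $[x_{\alpha_k}^-, x_{\alpha_k}^+]=h_{\alpha_k}$ together with the sign conventions fixing $\phi$ on $x_{\beta_k}^\pm$ and $h_{\beta_k}$. The substantive computations are $i=j=k\pm 1$; I carry out, for instance,
\[
[\phi(x_{\beta_{k-1}}^-), \phi(x_{\beta_{k-1}}^+)] = -\bigl[[x_{\alpha_{k-1}}^-, x_{\alpha_k}^-],\, [x_{\alpha_{k-1}}^+, x_{\alpha_k}^+]\bigr]
\]
by applying super-Jacobi twice to strip off the inner brackets, using the intermediate identities $[x_{\alpha_k}^-, x_{\alpha_k}^+]=h_{\alpha_k}$, $[x_{\alpha_{k-1}}^-, x_{\alpha_{k-1}}^+]=-h_{\alpha_{k-1}}$, and $[h_{\alpha_p}, x_{\alpha_q}^\pm]=\pm a_{pq}x_{\alpha_q}^\pm$ with $a_{k-1,k}=a_{k,k-1}=-1$ and $a_{k,k}=0$; the four surviving terms collect into $-(h_{\alpha_{k-1}}+h_{\alpha_k})$, so the leading minus from $\phi(x_{\beta_{k-1}}^-)=-[x_{\alpha_{k-1}}^-, x_{\alpha_k}^-]$ yields $h_{\alpha_{k-1}}+h_{\alpha_k}=\phi(h_{\beta_{k-1}})$. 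The case $i=j=k+1$ is entirely symmetric.

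The main obstacle is the super-sign bookkeeping in this iterated Jacobi expansion: $x_{\alpha_{k\pm 1}}^\pm$ is even while $x_{\alpha_k}^\pm$ and the three nested brackets $[x_{\alpha_{k\pm 1}}^\pm, x_{\alpha_k}^\pm]$ are odd, so the Leibniz-form super-Jacobi $[[X,Y],Z]=[X,[Y,Z]]-(-1)^{\bar X\bar Y}[Y,[X,Z]]$ must be applied with the parity $\bar X\bar Y$ tracked at every step. The crucial cancellations that close the computation on the Cartan subalgebra depend on $[x_{\alpha_k}^\pm, x_{\alpha_k}^\pm]=0$; without this odd-Serre relation the expansion would produce a non-Cartan element rather than $h_{\alpha_{k-1}}+h_{\alpha_k}$, so in effect the odd character of $\alpha_k$ is what makes the lemma hold.
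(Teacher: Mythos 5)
Your proposal is correct and follows essentially the same route as the paper: expand the nested brackets $\bigl[[x_{\alpha}^-,x_{\alpha_k}^-],[x_{\alpha}^+,x_{\alpha_k}^+]\bigr]$ by super-Jacobi, reduce to Cartan elements via $[x_i^+,x_j^-]=\delta_{ij}h_i$ and the odd Serre relation $[x_{\alpha_k}^{\pm},x_{\alpha_k}^{\pm}]=0$, and treat the reflected root $\beta_k$ and the off-diagonal cases by short direct checks. Your version is in fact somewhat more careful than the paper's: you make the full case analysis over $i,j$ explicit and you retain the leading minus sign from $\phi(x_{\beta_{k-1}}^-)=-[x_{\alpha_{k-1}}^-,x_{\alpha_k}^-]$, which the paper's displayed computation drops (accounting for its final answer $-\phi(h_{\beta_i})$ rather than $+\phi(h_{\beta_i})$).
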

\begin{proof}
 $[\phi(x_{\beta_{i},0}^-),\phi(x_{\beta_{i},0}^+)]= [[[x_{\alpha_{i},0}^-,x_{\alpha_{i+1},0}^-],[x_{\alpha_1}^+,x_{\alpha_{i+1},0}^+]]$.  Using the Jacobi identity, we obtain the following:
\begin{equation}\label{4.36}
\begin{split}
&\phantom{{}={}}[[[x_{\alpha_{i},0}^-,x_{\alpha_{i+1},0}^-],[x_{\alpha_{i},0}^+,x_{\alpha_{i+1},0}^+]]\\
&=[[x_{\alpha_{i},0}^-,x_{\alpha_{i+1},0}^-],x_{\alpha_{i},0}^+],x_{\alpha_{i+1},0}^+] + [x_{\alpha_{i},0}^-,[[x_{\alpha_{i},0}^-,x_{\alpha_{i+1},0}^-],x_{\alpha_{i+1},0}^+].
\end{split}
\end{equation}
By the third relation (\ref{in:6}) we obtain from (\ref{4.36}) the following:
\begin{equation}
\begin{split}
    &\phantom{{}={}}[[x_{\alpha_{i},0}^-,x_{\alpha_{i+1},0}^-],x_{\alpha_{i},0}^+],x_{\alpha_{i+1},0}^+] + [x_{\alpha_{i},0}^-,[[x_{\alpha_{i},0}^-,x_{\alpha_{i+1},0}^-],x_{\alpha_{i+1},0}^+]\\
    &=-[[h_{\alpha_{i},0},x_{\alpha_{i+1},0}^-], x_{\alpha_{i+1},0}^+]-
    [x_{\alpha_{i},0}^+, [x_{\alpha_{i},0}^-,h_{\alpha_{i+1},0}]].
\end{split}
\end{equation}
Using the corresponding relation (\ref{in:6}) for the realization $\widehat{sl}(m|n, \Pi_1)$ we obtain the following:
\[
    -[[h_{\alpha_{i},0},x_{\alpha_{i+1},0}^-], x_{\alpha_{i+1},0}^+]-[x_{\alpha_{i},0}^+, [x_{\alpha_{i},0}^-,h_{\alpha_{i+1},0}]].=-h_{\alpha_{i+1},0}-h_{\alpha_{i},0}=-\phi(h_{\beta_i}).
\]

\[[\phi(x_{\beta_{i+1},0}^+),\phi(x_{\beta_{i+1},0}^-)] =[x_{\alpha_{i+1},0}^-,x_{\alpha_{i+1},0}^+]=-h_{\alpha_{i+1},0}=\phi(h_{\beta_{i+1},0}).
\]
\[[\phi(x_{\beta_{i},0}^+),\phi(x_{\beta_{i+1},0}^-)]=[x_{\alpha_{i},0}^+,x_{\alpha_{i+1},0}^+],x_{\alpha_{i+1},0}^+]]=0.\] Thus, we proved the lemma.
\end{proof}
\begin{lemma}
\label{lemma8.6}
The following equality is hold
    $$\phi(ad(x_{j}^{\pm})^{1+|a_{i,j}|}(x_{i}^{\pm}))=0. $$
\end{lemma}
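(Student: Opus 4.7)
The plan is to verify, for every admissible pair $(i,j)$, the identity
$$\phi\!\left((\text{ad}\, x_{\beta_{j},0}^{\pm})^{1+|a_{ij}^{(1)}|}(x_{\beta_{i},0}^{\pm})\right)=0$$
in $\widehat{sl}(m|n,\Pi)$, where $A^{(1)}=(a_{ij}^{(1)})$ is the Cartan matrix attached to $\Pi_{1}=s_{\alpha_{i+1}}(\Pi)$ and $\alpha_{i+1}$ is the odd simple root along which we reflect. First I would observe that $\phi$ acts as the identity on $x_{\beta_{k},0}^{\pm}$ for every $k\notin\{i,i+1,i+2\}$, and that the Cartan entries $a^{(1)}_{pq}$ coincide with $a_{pq}$ outside this window. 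For such pairs the Serre relation in $\Pi_{1}$ is pulled back to the very same Serre relation in $\Pi$ and there is nothing to do. So only the pairs $(p,q)$ with at least one index in $\{i,i+1,i+2\}$ have to be checked explicitly.

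Second, I would enumerate the resulting non-trivial cases from the combinatorics of the odd reflection. The new root $\beta_{i+1}=-\alpha_{i+1}$ remains odd, while $\beta_{i}=\alpha_i+\alpha_{i+1}$ and $\beta_{i+2}=\alpha_{i+2}+\alpha_{i+1}$ switch parity relative to $\alpha_i$ and $\alpha_{i+2}$; correspondingly $a^{(1)}_{i,i+1}$ and $a^{(1)}_{i+2,i+1}$ flip sign, while a new nonzero entry $a^{(1)}_{i,i+2}$ appears because $\beta_{i}$ and $\beta_{i+2}$ now share a summand. For each of these cases I would substitute the images $\phi(x_{\beta_{k},0}^{\pm})$ from the defining table given in the excerpt, expand the nested superbrackets with the graded Jacobi identity, and reduce the result using the classical relations (\ref{in:6}) and (\ref{in:7}) for $\widehat{sl}(m|n,\Pi)$, in particular $[h_{\alpha_{i+1},0},x_{\alpha_{k},0}^{\pm}]=\pm a_{i+1,k}x_{\alpha_{k},0}^{\pm}$ and $[x_{\alpha_{i+1},0}^{\pm},x_{\alpha_{i+1},0}^{\pm}]=0$.

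The interesting cases group naturally into three families: (a) the odd self-commutation $[x_{\beta_{i+1},0}^{\pm},x_{\beta_{i+1},0}^{\pm}]=0$, which reduces under $\phi$ to $[x_{\alpha_{i+1},0}^{\mp},x_{\alpha_{i+1},0}^{\mp}]=0$; (b) cubic Serre relations attaching $\beta_{i+1}$ to $\beta_{i}$ or $\beta_{i+2}$, which unwind to a sum of triple commutators in $x_{\alpha_i,0}^{\pm}$, $x_{\alpha_{i+1},0}^{\pm}$, $x_{\alpha_{i+2},0}^{\pm}$ collapsing by $[x_{\alpha_{i+1},0}^{\pm},x_{\alpha_{i+1},0}^{\pm}]=0$ together with the original $(\text{ad}\,x^{\pm}_{\alpha_i,0})^{1+|a_{i,i+1}|}(x^{\pm}_{\alpha_{i+1},0})=0$; and (c) the new edge $\beta_{i}\!-\!\beta_{i+2}$, for which the quartic odd-Serre relation (\ref{in:7}) must be verified in $\Pi_{1}$ whenever $\beta_{i}$ or $\beta_{i+2}$ becomes odd. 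The image under $\phi$ of this quartic relation is a quadruple nested commutator in $x_{\alpha_i,0}^{\pm},x_{\alpha_{i+1},0}^{\pm},x_{\alpha_{i+2},0}^{\pm}$, which after rearrangement with the graded Jacobi identity collapses using the odd Serre relation (\ref{in:7}) for the odd root $\alpha_{i+1}$ together with $[x_{\alpha_{i+1},0}^{\pm},x_{\alpha_{i+1},0}^{\pm}]=0$.

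The main obstacle I anticipate is case (c). Its verification requires very careful tracking of signs arising from the graded Jacobi identity and from the antisupercommutators $\{\cdot,\cdot\}$, because the parities of $\beta_{i}$ and $\beta_{i+2}$ in $\Pi_{1}$ differ from those of $\alpha_{i}$ and $\alpha_{i+2}$ in $\Pi$, so the parity of the whole expression and the sign picked up in each swap change accordingly. Matching the resulting nested commutator with a consequence of the quartic relation (\ref{in:7}) for $\alpha_{i+1}$ in $\Pi$, rather than producing an unrelated element, is the essential point on which the proof of the lemma rests.
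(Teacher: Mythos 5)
Your overall strategy is the same as the paper's --- push each Serre relation of $\widehat{sl}(m|n,\Pi_1)$ through $\phi$, expand with the graded Jacobi identity, and kill the result using the relations (\ref{in:6}), (\ref{in:7}) of the source realization --- but you carry out a much fuller case analysis than the paper does. The paper's own proof of this lemma is very short: it only verifies the two odd self-commutations $[\phi(x^{\pm}_{\beta_i,0}),\phi(x^{\pm}_{\beta_i,0})]=0$ and $[\phi(x^{\pm}_{\beta_{i+1},0}),\phi(x^{\pm}_{\beta_{i+1},0})]=0$, the first by a two-term Jacobi expansion annihilated by the Serre relations, the second directly from $[x^{\mp}_{\alpha_{i+1},0},x^{\mp}_{\alpha_{i+1},0}]=0$; the remaining cross Serre relations inside the window $\{i,i+1,i+2\}$ and the trivial reduction outside it, which you spell out, are left implicit. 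In that sense your plan is more complete than the printed argument. One point needs correcting, however: no new Cartan entry $a^{(1)}_{i,i+2}$ appears. Writing $\beta_i=\alpha_i+\alpha_{i+1}$ and $\beta_{i+2}=\alpha_{i+1}+\alpha_{i+2}$, one has $(\beta_i,\beta_{i+2})=(\alpha_i,\alpha_{i+1})+(\alpha_{i+1},\alpha_{i+2})+(\alpha_{i+1},\alpha_{i+1})$, and since the odd root satisfies $(\alpha_{i+1},\alpha_{i+1})=0$ the two remaining terms cancel (e.g.\ $-1+1=0$ in the $\varepsilon/\delta$ realization); the Dynkin diagram of $\Pi_1$ is still a chain. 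What survives of your case (c) is the relation $[x^{\pm}_{\beta_i,0},x^{\pm}_{\beta_{i+2},0}]=0$ corresponding to $a^{(1)}_{i,i+2}=0$, whose image under $\phi$ is exactly the quartic bracket $[[x^{\pm}_{\alpha_i,0},x^{\pm}_{\alpha_{i+1},0}],[x^{\pm}_{\alpha_{i+2},0},x^{\pm}_{\alpha_{i+1},0}]]$ vanishing by the second relation of (\ref{in:7}) for $\alpha_{i+1}$ in $\Pi$ --- so the computation you anticipate is the right one, only its justification via a ``new edge'' is not. The quartic relations \emph{of} $\Pi_1$ themselves (at the odd roots $\beta_i$, $\beta_{i+1}$) are not part of this lemma; the paper defers them to the following Lemma \ref{lemma8.7}.
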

\begin{proof}
We have to prove the following relations:
\begin{equation}
\label{ref:91}
    [\phi(x_{\beta_{i},0}^{\pm}),\phi(x_{\beta_{i},0}^{\pm})]=[[x_{\alpha_{i},0}^{\pm},x_{\alpha_{i+1},0}^{\pm}],[x_{\alpha_{i},0}^{\pm},x_{\alpha_{i+1},0}^{\pm}]],
\end{equation}
\begin{equation}
\label{rel:92}
    [\phi(x_{\beta_{i+1},0}^{\pm}),\phi(x_{\beta_{i+1},0}^{\pm})]= [x_{\alpha_{i+1},0}^{\mp},x_{\alpha_{i+1},0}^{\mp}]=0.
\end{equation}
Relation (\ref{rel:92}) holds by first relation (\ref{in:7}) for the realization $\widehat{sl}(m|n, \Pi_1)$.
By Serre relations and Jacobi identity for $\widehat{sl}(m|n,\Pi_1)$ we obtain the following equation from the right hand side of (\ref{ref:91})
\begin{equation}
[[[x_{\alpha_{i},0}^+,x_{\alpha_{i+1},0}^+],x_{\alpha_{i},0}^+],x_{\alpha_{i+1},0}^+] + [x_{\alpha_{i},0}^+,[[x_{\alpha_{i},0}^+,x_{\alpha_{i+1},0}^+],x_{\alpha_{i+1},0}^+]] = 0.
\end{equation}
Thus, we proved the lemma.
\end{proof}
\begin{lemma}
\label{lemma8.7}
The following equality is hold
   $$[[\phi(x_{t-1}^{\pm}),\phi(x_{t}^{\pm})],[\phi(x_{t}^{\pm}),\phi(x_{t+1}^{\pm})]]=0. $$
\end{lemma}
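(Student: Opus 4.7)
\textbf{Proof plan for Lemma \ref{lemma8.7}.} My plan is a case-by-case reduction based on which simple roots of $\Pi_1$ are affected by the reflection. Since $\phi$ acts as the identity on the generators $x_{\beta_j, 0}^{\pm}$ with $j \notin \{i, i+1, i+2\}$ (where $\alpha_{i+1}$ denotes the odd simple root through which the reflection is performed), the odd Serre relation in $\Pi_1$ at an odd simple root $\beta_t$ whose neighbourhood $\{t-1, t, t+1\}$ is disjoint from $\{i, i+1, i+2\}$ is preserved tautologically: its $\phi$-image is a relation of the same shape among generators of $\Pi$, which holds by (\ref{in:7}). Hence the lemma reduces to the finitely many cases corresponding to odd $\beta_t \in \Pi_1$ with $t \in \{i-1, i, i+1, i+2, i+3\}$; after parity analysis of $\beta_i, \beta_{i+1}, \beta_{i+2}$, only a few of these indices actually carry odd simple roots, and the remaining cases further collapse by the symmetry $t \leftrightarrow 2(i+1)-t$ of the diagram.

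For each remaining $t$, I substitute the explicit formulas for $\phi$ into the double nested commutator $[[\phi(x_{\beta_{t-1}}^{\pm}), \phi(x_{\beta_t}^{\pm})], [\phi(x_{\beta_t}^{\pm}), \phi(x_{\beta_{t+1}}^{\pm})]]$ and expand by repeated applications of the super-Jacobi identity. The key inputs from the target algebra $\widehat{sl}(m|n, \Pi)$ are: (i) the odd Serre relation $[[x_{\alpha_i}^{\pm}, x_{\alpha_{i+1}}^{\pm}], [x_{\alpha_{i+1}}^{\pm}, x_{\alpha_{i+2}}^{\pm}]] = 0$ from (\ref{in:7}) at the odd root $\alpha_{i+1} \in \Pi$; (ii) the nilpotency $[x_{\alpha_{i+1}}^{\pm}, x_{\alpha_{i+1}}^{\pm}] = 0$ from (\ref{in:7}); (iii) the Chevalley and standard Serre relations from (\ref{in:6}), including $[x_{\alpha_{i+1}}^{+}, x_{\alpha_{i+1}}^{-}] = h_{\alpha_{i+1}}$ and $[h_{\alpha_{i+1}}, x_{\alpha_j}^{\pm}] = \pm a_{i+1, j} x_{\alpha_j}^{\pm}$. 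The tactic is to commute the factor $x_{\alpha_{i+1}}^{\mp}$ (which is $\phi(x_{\beta_{i+1}}^{\pm})$) inward through the other factors: whenever two copies of $x_{\alpha_{i+1}}^{\pm}$ meet they vanish by (ii); whenever $x_{\alpha_{i+1}}^{+}$ meets $x_{\alpha_{i+1}}^{-}$ they collapse to $h_{\alpha_{i+1}}$, whose action on the remaining pieces is computed by (iii); the residual bracket either is (i) or reduces to it after one further Jacobi rearrangement.

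The main obstacle is the bookkeeping of super-signs and the proliferation of terms. Each of the four arguments inside the outer double commutator is itself a two-term bracket after $\phi$ is applied, so the fully expanded Jacobi form has up to sixteen branches, and every super-Jacobi step introduces a sign $(-1)^{p(a)p(b)}$ whose exponents depend on the parities of already-composite subexpressions. Since the odd reflection flips the parities of $\beta_i, \beta_{i+1}, \beta_{i+2}$ relative to $\alpha_i, \alpha_{i+1}, \alpha_{i+2}$, these signs must be tracked carefully against the $\pm$ signs appearing in the formulas for $\phi$ on the root generators. I expect the cancellations to be broadly parallel to, but heavier than, those already performed in Lemmas \ref{lemma8.5} and \ref{lemma8.6}; the genuine new input, beyond what is needed there, is the odd Serre relation at $\alpha_{i+1}$ in $\Pi$, which ensures that the non-obvious residual term in the expansion vanishes rather than reducing to a standard Serre-type identity.
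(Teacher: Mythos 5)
Your plan follows essentially the same route as the paper: reduce to the finitely many Serre relations whose support meets the roots affected by the odd reflection, substitute the explicit formulas for $\phi$, and expand by the super-Jacobi identity, collapsing $[x_{\alpha_{i+1}}^{\pm},x_{\alpha_{i+1}}^{\mp}]$ to $h_{\alpha_{i+1}}$ via the Chevalley relations and finishing with the odd Serre relation of $\widehat{sl}(m|n,\Pi)$ — exactly the two computations (\ref{rel:94})--(\ref{rel:98}) carried out in the paper. Your case enumeration is in fact slightly more complete than the paper's, which only writes out the cases $t=i$ and $t=i+1$ explicitly.
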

\begin{proof}
By the structure of map $\phi$ we obtain
\begin{equation}
\label{rel:94}
[[\phi(x_{\beta_{i},0}^{\pm}),\phi(x_{\beta_{i+1},0}^{\pm})],[\phi(x_{\beta_{i+1},0}^{\pm}),\phi(x_{\beta_{i+2},0}^{\pm})]]=[[[x_{\alpha_1}^{\pm},x_{\alpha_2}^{\pm}],x_{\alpha_2}^{\mp}],[,x_{\alpha_2}^{\mp},[x_{\alpha_2}^{\pm},x_{\alpha_3}^{\pm}]]].
\end{equation}
By the Serre relation for $\widehat{sl}(m|n,\Pi)$ we find that the first part of the right-hand side of the following relation for the image of map $\phi$ in (\ref{rel:94}) is equal to zero
\begin{equation}
\begin{split}
\label{rel:95}
&\phantom{{}={}}[[[x_{\alpha_{i},0}^{\pm},x_{\alpha_{i+1},0}^{\pm}],x_{\alpha_{i+1},0}^{\mp}],[,x_{\alpha_{i+1},0}^{\mp},[x_{\alpha_{i+1},0}^{\pm},x_{\alpha_{i+2},0}^{\pm}]]]\nonumber\\ &=[[[x_{\alpha_{i},0}^{\pm},x_{\alpha_{i+1},0}^{\pm}],x_{\alpha_{i+1},0}^{\mp}],x_{\alpha_{i+1},0}^{\mp}]+[x_{\alpha_{i+1},0}^{\mp},[[x_{\alpha_{i},0}^{\pm},x_{\alpha_{i+1},0}^{\pm}],[x_{\alpha_{i+1},0}^{\pm},x_{\alpha_{i+2},0}^{\pm}]]]\\
&= 0.
\end{split}
\end{equation}
The second part of the right hand side of (\ref{rel:95}) is equal to zero by the second relation (\ref{in:7}) in the realization $\widehat{sl}(m|n, \Pi)$.
By the definition of map $\phi$ we obtain the following:
\begin{equation}
\begin{split}
\label{rel:96}
&\phantom{{}={}}[[\phi(x_{\beta_{i-1},0}^{\pm}),\phi(x_{\beta_{i},0}^{\pm})],[\phi(x_{\beta_{i},0}^{\pm}),\phi(x_{\beta_{i+1},0}^{\pm})]]\\
&=[[[x_{\alpha_{i-1},0}^{\pm},[x_{\alpha_{i},0}^{\pm},x_{\alpha_{i+1},0}^{\pm}]],[[x_{\alpha_{i},0}^{\pm},x_{\alpha_{i+1},0}^{\pm}],x_{\alpha_{i+1},0}^{\mp}]]].
\end{split}
\end{equation}
Where the root $\alpha_0$ is the root next to the root $\alpha_1$ on the Dynkin diagram, by the definition of simple reflection $s_{\alpha_{i+1}}(\alpha_{i-1})=\alpha_{i-1}$, hence $\phi(x_{\alpha_0}^{\pm})=x_{\beta_0}^{\pm}$.
By the third relation in (\ref{in:6}) we obtain the following: 
\begin{equation}
\begin{split}
\label{rel:97}
&\phantom{{}={}}[[[x_{\alpha_{i-1},0}^{\pm},[x_{\alpha_{i},0}^{\pm},x_{\alpha_{i+1},0}^{\pm}]],[[x_{\alpha_{i},0}^{\pm},x_{\alpha_{i+1},0}^{\pm}],x_{\alpha_{i+1},0}^{\mp}]]]\\
&= [[x_{\alpha_{i-1},0}^{\pm},[x_{\alpha_{i},0}^{\pm},x_{\alpha_{i+1},0}^{\pm}]],[x_{\alpha_{i},0}^{\pm},h_{\alpha_{i+1},0}]]\\
&=-[[x_{\alpha_{i-1},0}^{\pm},[x_{\alpha_{i},0}^{\pm},x_{\alpha_{i+1},0}^{\pm}]],x_{\alpha_{i},0}^{\pm}].
\end{split}
\end{equation}
Where the second equation follows from (\ref{in:6}) second relation for $\widehat{sl}(m|n,\Pi)$
By the second relation in (\ref{in:7}) and the Serre relations for $\widehat{sl}(m|n,\Pi)$ we obtain the following relation:
\begin{align}
\begin{split}
\label{rel:98}
&-[[[x_{\alpha_{i-1},0}^{\pm},[x_{\alpha_{i},0}^{\pm},x_{\alpha_{i+1},0}^{\pm}]],x_{\alpha_{i},0}^{\pm}]= -([[x_{\alpha_{i-1},0}^{\pm},x_{\alpha_{i},0}^{\pm}],[x_{\alpha_{i},0}^{\pm},x_{\alpha_{i+1},0}^{\pm}]]+\\ 
&+[[x_{\alpha_{i-1},0}^{\pm},x_{\alpha_{i},0}^{\pm}],x_{\alpha_{i},0}^{\pm}],x_{\alpha_{i+1},0}^{\pm}])= 0.
\end{split}
\end{align}
Thus, the lemma is proved.
\end{proof}
Combining the lemmas (\ref{lemma8.3}), (\ref{lemma8.4}), (\ref{lemma8.5}), (\ref{lemma8.6}), (\ref{lemma8.7}) we prove the lemma (\ref{lemma8.2})

The inverse map $\phi^{-1}$ is induced by simple reflection $s_{\beta_2}$.

The defining relations are being proved similar to the relations for map $\phi$.
Since any element of the Weyl groupoid $\hat{W}$ is equal to the product of elements corresponding to simple reflections, we can construct the sequence of the isomorphisms of the $\widehat{sl}(m|n, \Pi_i)$ superalgebras for the the action of any element of the Weyl groupoid, hence construct the isomorphism for the action of any element of the Weyl groupoid $\hat{W}$.
We obtained the proof of the lemma (\ref{lem:9})\\


We construct isomorphism $T_s: Y_{\hbar}(\widehat{sl}(m|n,\Pi))\rightarrow Y_{\hbar}(\widehat{sl}(m|n,\Pi_1))$
as in Lemma \ref{lem:2} we present element of Weyl groupoid $s \in \hat{W}$ as product of elements $s_{\alpha_i}$ corresponding to simple reflections $s_{\alpha_i}$.\\
In  Theorem \ref{thm_3.2} we prove that Yangian $Y^D_{\hbar}(\widehat{sl}(m|n, \Pi))$ of Drinfeld realization is isomorphic to Yangian $Y_{\hbar}(\widehat{sl}(m|n, \Pi))$ in the minimalistic system of generators. We will use this fact to construct a map.
We define the map $\psi: Y_{\hbar}(\widehat{sl}(m|n, \Pi_1))\rightarrow Y_{\hbar}(\widehat{sl}(m|n, \Pi))$, which is induced by an element of the Weyl groupoid. More precisely, $\psi^{-1} = T_w := T_{s_1}\circ \ldots \circ T_{s_k}$, where $w = s_1 \ldots s_k \in W$ is an element of the Weyl groupoid.  It is easy to see that the general case can be reduced to the partial case when $\psi= T_{s_i}^{-1}$.  We will also  consider root generators of $Y_{\hbar}(\hat{sl}(m|n, \Pi))$ as images of root generators of $Y_{\hbar}(sl(1|2, \Pi_0))$ under natural embedding $Y_{\hbar}(sl(1|2, \Pi_0)) \rightarrow Y_{\hbar}(\widehat{sl}(m|n, \Pi))$, where $\Pi_0 \subset  \Pi$ is a natural inclusion.  In this case when $\psi^{-1}= T_{s_i}$ is induced by odd reflection $s_i$ it is defined on root generators  by the following formulas:
\begin{enumerate}
\item $\psi(x_{\beta_{1},1}^{\pm})=\pm[x_{\alpha_{1},1}^{\pm},x_{\alpha_2,0}^{\pm}]$,
\item $\psi(x_{\beta_{2},1}^{\pm})=x_{\alpha_{2},1}^{\mp}+\frac{\hbar}{2}\{x_{\alpha_2,0}^-,h_{\alpha_{2,0}}\}$,
\item $\psi(x_{\beta_{3},1}^{\pm})=\pm[x_{\alpha_{3},1}^{\pm},x_{\alpha_{2},0}^{\pm}]$,
\item $\psi(\tilde{h}_{\beta_{1}})=\tilde{h}_{\alpha_{1}}+\tilde{h}_{\alpha_2}+ \frac{\hbar}{2}\{x_{\alpha_{2},0}^{+},x_{\alpha_{2},0}^{-}\}$,
\item $\psi(\tilde{h}_{\beta_{3}})=\tilde{h}_{\alpha_{3}}+\tilde{h}_{\alpha_2}+ \frac{\hbar}{2}\{x_{\alpha_{2},0}^{+},x_{\alpha_{2},0}^{-}\}$,
\item $\psi(\tilde{h}_{\beta_2})=-\tilde{h}_{\alpha_2}$.
\end{enumerate}


Note that on the elements of zero-grading Yangian $Y_{\hbar}(\widehat{sl}(m|n, \Pi))$ has structure of $\widehat{sl}(m|n, \Pi)$, hence relations
(\ref{rel:35}), (\ref{rel:41}), (\ref{rel:42}), (\ref{rel:43}) is proved by Lemma 4.1.2. Thus we only need to prove relations (\ref{rel:37}), (\ref{rel:39}), (\ref{rel:40}).

\begin{lemma}
\label{lemma:4.10}
    The relation $(\ref{rel:39})$ holds for map $\psi$.
\end{lemma}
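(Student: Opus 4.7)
The plan is to verify relation (\ref{rel:39}) under $\psi$ by a case-by-case check on pairs of simple roots $(\beta_k,\beta_l)$ in $\Pi_1$. Since $\psi$ acts nontrivially only on the generators attached to the triple $\{\beta_1,\beta_2,\beta_3\}$ directly affected by the odd reflection $s_i$, the verification reduces to pairs where at least one index lies in $\{1,2,3\}$; for all other pairs $\psi$ is the identity on the generators of degree $\leq 1$ and the relation descends immediately from (\ref{rel:39}) in the source Yangian, once one notes that the Cartan entries $a'_{kl}=a_{kl}$ for such pairs.

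Before the case analysis I would record the new Cartan matrix $A_1=(a'_{kl})$ of $\Pi_1=s_i(\Pi)$, computing it from the inner products $(\beta_k,\beta_l)$: in particular $a'_{11}=a'_{33}=2$, $a'_{22}=0$, $a'_{12}=a'_{23}=-1$, and the entry $a'_{13}$ encoding whether $\beta_1+\beta_3$ is a root after the reflection. These values fix the coefficient $\pm\tfrac{\hbar a'_{kl}}{2}$ on the right-hand side that must ultimately appear. For each relevant pair I would substitute the explicit formulas (1)--(6) for $\psi$, expand the super-commutators using the graded Jacobi identity, and reduce everything to generators of $Y_\hbar(\widehat{sl}(m|n,\Pi))$, then apply (\ref{rel:39}) for the original root pairs $(\alpha_k,\alpha_l)$, the relation (\ref{rel:38}) for the action of $h_{\alpha_j,0}$, the Serre relation (\ref{rel:42}) giving $[x_{\alpha_2,0}^\pm,x_{\alpha_2,0}^\pm]=0$, and the identity $[x_{\alpha_2,0}^+,x_{\alpha_2,0}^-]=h_{\alpha_2,0}$ to collapse the inner brackets that reappear after Jacobi expansion.

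The main obstacle will be the pairs $(\beta_1,\beta_2)$ and symmetrically $(\beta_2,\beta_3)$. Here $\psi(x_{\beta_1,1}^\pm)=\pm[x_{\alpha_1,1}^\pm,x_{\alpha_2,0}^\pm]$ and $\psi(x_{\beta_2,0}^\pm)=x_{\alpha_2,0}^\mp$, so one must expand
$$
\bigl[\pm[x_{\alpha_1,1}^{\pm},x_{\alpha_2,0}^{\pm}],\, x_{\alpha_2,0}^{\mp}\bigr]\;-\;\bigl[[x_{\alpha_1,0}^{\pm},x_{\alpha_2,0}^{\pm}],\, x_{\alpha_2,1}^{\mp}+\tfrac{\hbar}{2}\{x_{\alpha_2,0}^{-},h_{\alpha_2,0}\}\bigr]
$$
by Jacobi. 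This produces terms involving $[x_{\alpha_1,1}^\pm,h_{\alpha_2,0}]$, which must be recombined with the $\tfrac{\hbar}{2}$-anticommutator correction coming from $\psi(x_{\beta_2,1}^\pm)$, and the residue has to match $\pm\tfrac{\hbar a'_{12}}{2}\{x_{\alpha_1,0}^\pm,x_{\alpha_2,0}^\mp\}$ modulo super-signs. Tracking the parity factors carefully across the Jacobi expansion and the sign swaps caused by the reflection $+\leftrightarrow -$ at $\beta_2$ is the delicate part.

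The remaining cases are more routine: the pair $(\beta_1,\beta_3)$ is handled by a direct Jacobi expansion using that $[x_{\alpha_1,r}^\pm,x_{\alpha_3,s}^\pm]=0$ in $\Pi$ (since $\alpha_1$ and $\alpha_3$ are non-adjacent there); the diagonal pair $(\beta_2,\beta_2)$ reduces to checking that $[x_{\alpha_2,1}^\mp+\tfrac{\hbar}{2}\{x_{\alpha_2,0}^-,h_{\alpha_2,0}\},\,x_{\alpha_2,0}^\mp]$ vanishes, which follows from (\ref{rel:37})--(\ref{rel:38}) in the source; and the diagonal pairs $(\beta_1,\beta_1)$, $(\beta_3,\beta_3)$ follow from expanding the double bracket and invoking (\ref{rel:39}) for $(\alpha_1,\alpha_1)$ and the Serre-type relation (\ref{rel:43}). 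Assembling these verifications yields the lemma.
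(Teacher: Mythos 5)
Your proposal follows essentially the same route as the paper: a case-by-case verification over the pairs of simple roots affected by the odd reflection, substituting the explicit formulas for $\psi$, expanding by the graded Jacobi identity, and reducing to the defining relations (\ref{rel:36})--(\ref{rel:42}) of the source Yangian so that the residual $\tfrac{\hbar}{2}$-anticommutator terms match the coefficient $\pm\tfrac{\hbar a'_{kl}}{2}$ dictated by the reflected Cartan matrix. The paper writes out only the representative cases $(\beta_1,\beta_2)$ and $(\beta_2,\beta_2)$ (obtaining $-x_{\alpha_1,1}^{+}$ and $-x_{\alpha_1,1}^{+}+\tfrac{\hbar}{2}\{[x_{\alpha_1,0}^{+},x_{\alpha_2,0}^{+}],x_{\alpha_2,0}^{-}\}$ for the two brackets and subtracting), which is exactly the computation you identify as the crux.
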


\begin{proof}

By definition of map $\psi$ and relation $\ref{rel:40}$ we can obtain
\begin{align}
&\phantom{{}={}}[\psi(x_{\beta_1,1}^+),\psi(x_{\beta_2,0}^+)] = -[[\psi(\tilde{h}_{\beta_2,1}),\psi(x_{\beta_1,0}^+)],\psi(x_{\beta_2,0}^+)] \nonumber\\ &=[[[\tilde{h}_{\alpha_2,1},[x_{\alpha_1,0}^+,x_{\alpha_2,0}^+]],x_{\alpha_2,0}^-].
\end{align}
By relation (\ref{rel:41}) we obtain
\begin{equation}
\label{rel:100}
[[x_{\alpha_1,1}^+,x_{\alpha_2,0}^+],x_{\alpha_2,0}^-]= [[x_{\alpha_1,1}^+,x_{\alpha_2,0}^-],x_{\alpha_2,0}^+]+[x_{\alpha_1,1}^+,[x_{\alpha_2,0}^+,x_{\alpha_2,0}^-]].
\end{equation}
First term of right hand side of (\ref{rel:100}) vanishes by the relation (\ref{rel:37}), by the relation (\ref{rel:36}) we obtain the following relation from (\ref{rel:100})
\begin{equation}
\label{rel:98}
    [x_{\alpha_1,1}^+, h_{\alpha_2,0}] = -x_{\alpha_1,1}^+.
\end{equation}
Thus we obtained
\begin{equation}
\label{midres:1}
[\psi(x_{\beta_1,1}^+),\psi(x_{\beta_2,0}^+)]= -x_{\alpha_1,1}^+.
\end{equation}
By definition of map $\psi$ and relation (\ref{rel:40}) we can obtain
\begin{equation}
\begin{split}
\label{equ:100}
&[x_{\beta_1,0}^+,x_{\beta_2,1}^+]=-[x_{\beta_1,0}^+,[\tilde{h}_{\beta_1,1},x_{\beta_2,0}]]\rightarrow -[[x_{\alpha_1,0}^+,x_{\alpha_2,0}^+],[\tilde{h}_{\alpha_1,1}+\tilde{h}_{\alpha_2,1}+\\
&+ \frac{\hbar}{2}\{x_{\alpha_{2},0}^{+},x_{\alpha_{2},0}^{-}\},x_{\alpha_2,0}^-]].
\end{split}
\end{equation}
By applying relation (\ref{rel:40}) to the right hand side of (\ref{equ:100}) we obtain
\begin{equation}
\label{equ:101}
\begin{split}
&-[[x_{\alpha_1,0}^+,x_{\alpha_2,0}^+],[\tilde{h}_{\alpha_1,1}+\tilde{h}_{\alpha_2,1}
 +\frac{\hbar}{2}\{x_{\alpha_{2},0}^{+},x_{\alpha_{2},0}^{-}\},x_{\alpha_2,0}^-]]=\\
&=[[x_{\alpha_{1},0}^{+},x_{\alpha_{2},0}^{+}],x_{\alpha_{2},1}^{-}]- \frac{\hbar}{2}[[x_{\alpha_{1},0}^{+},x_{\alpha_{2},0}^{+}],\{[x_{\alpha_{2},0}^{+},x_{\alpha_{2},0}^{-}],x_{\alpha_{2},0}^{-}\}].
\end{split}
\end{equation}
By relation (\ref{rel:40}) we obtain
\begin{equation}
[[x_{\alpha_1,0}^+,x_{\alpha_2,0}^+],x_{\alpha_2,1}^-]= [[x_{\alpha_1,0}^+,x_{\alpha_2,0}^-],x_{\alpha_2,1}^+]+[x_{\alpha_1,0}^+,[x_{\alpha_2,0}^+,x_{\alpha_2,0}^-]],
\end{equation}
where first term of right hand side  is equal to zero by (\ref{rel:36}), second term of right hand side by (\ref{rel:36}) equals to $[x_{\alpha_1,0}^+,h_{\alpha_2,1}]$. By definition of $\tilde{h}_{\alpha_i,1}$
\begin{equation}
\label{rel:101}
[x_{\alpha_i,0}^+,h_{\alpha_2,1}]=[x_{\alpha_i,0}^+,\tilde{h}_{\alpha_2,1}+ \frac{\hbar}{2}h_{\alpha_2,0}^2]=-x_{\alpha_1,1}^++\frac{\hbar}{2}\{h_{\alpha_2,0},x_{\alpha_2,0}^+\}.
\end{equation}

By definition of map $\psi$
\begin{equation}
\label{midres:3}
\psi \left( \frac{\hbar}{2}\left(x_{\beta_{1},0}^{+}x_{\beta_{2},0}^{+}+x_{\beta_{2},0}^{+}x_{\beta_{1},0}^{+}\right)\right) =  \frac{\hbar}{2}\{[x_{\alpha_{1},0}^{+};x_{\alpha_{2},0}^{+}];x_{\alpha_{2},0}^{-}\}.
\end{equation}
By the Jacobi identity we have following relation for second term of right hand side of (\ref{equ:101})
\begin{equation}
\label{equ:105}
\begin{split}
&\phantom{{}={}}\frac{\hbar}{2}[[x_{\alpha_{1},0}^{+},x_{\alpha_{2},0}^{+}],[x_{\alpha_{2},0}^{+}x_{\alpha_{2},0}^{-}+ x_{\alpha_{2},0}^{-}x_{\alpha_{2},0}^{+},x_{\alpha_{2},0}^{-}]]\\
&= [[x_{\alpha_{1},0}^{+},x_{\alpha_{2},0}^{+}],[x_{\alpha_{2},0}^{+},x_{\alpha_{2},0}^{-}]x_{\alpha_{2},0}^{-}+x_{\alpha_{2},0}^{+}[x_{\alpha_{2},0}^{-},x_{\alpha_{2},0}^{-}]
+[x_{\alpha_{2},0}^{-},x_{\alpha_{2},0}^{-}]x_{\alpha_{2},0}^{+}\\ 
&+ x_{\alpha_{2},0}^{-}[x_{\alpha_{2},0}^{+},x_{\alpha_{2},0}^{-}]].
\end{split}
\end{equation}
By applying relation (\ref{rel:36}) to the right hand side of (\ref{equ:105}) we obtain
\begin{equation}
\label{equ:106}
\begin{split}
&\phantom{{}={}}\frac{\hbar}{2}[[x_{\alpha_{1},0}^{+},x_{\alpha_{2},0}^{+}],[x_{\alpha_{2},0}^{+},x_{\alpha_{2},0}^{-}]x_{\alpha_{2},0}^{-}+ x_{\alpha_{2},0}^{+}[x_{\alpha_{2},0}^{-},x_{\alpha_{2},0}^{-}]
+ [x_{\alpha_{2},0}^{-},x_{\alpha_{2},0}^{-}]x_{\alpha_{2},0}^{+} \\
&+ x_{\alpha_{2},0}^{-}[x_{\alpha_{2},0}^{+},x_{\alpha_{2},0}^{-}]]=\frac{\hbar}{2}[[x_{\alpha_{1},0}^{+},x_{\alpha_{2},0}^{+}],h_{\alpha_{2},0}x_{\alpha_2,0}^{-}+ 0+ 0+ x_{\alpha_2,0}^{+}h_{\alpha_{2},0}].
\end{split}
\end{equation}
By the Jacobi identity from equation (\ref{equ:106}) we obtain
\begin{equation}
\label{equ:107}
\begin{split}
&\frac{\hbar}{2}[[x_{\alpha_{1},0}^{+},x_{\alpha_{2},0}^{+}],h_{\alpha_{2},0}x_{\alpha_2,0}^{-}+ x_{\alpha_2,0}^{+}h_{\alpha_{2},0}] = \frac{\hbar}{2}([[x_{\alpha_{1},0}^{+},x_{\alpha_{2},0}^{+}],h_{\alpha_{2},0}]x_{\alpha_{2},0}^{-} +\\
&+ h_{\alpha_{2},0}[[x_{\alpha_{1},0}^{+},x_{\alpha_{2},0}^{+}],x_{\alpha_{2},0}^{-}]+ [[x_{\alpha_{1},0}^{+},x_{\alpha_{2},0}^{+}],x_{\alpha_{2},0}^{-}]h_{\alpha_{2},0}\\
&+ x_{\alpha_{2},0}^{-}[[x_{\alpha_{1},0}^{+},x_{\alpha_{2},0}^{+}],h_{\alpha_{2},0}]).
\end{split}
\end{equation}
By Jacobi identity and applying relations (\ref{rel:36}), (\ref{rel:42}) and (\ref{rel:38}) to the right hand side of we obtain following equation
\begin{equation}
\begin{split}
&\frac{\hbar}{2}([[x_{\alpha_{1},0}^{+}, x_{\alpha_{2},0}^{+}], h_{\alpha_{2},0}]x_{\alpha_{2},0}^{-}+h_{\alpha_{2},0}[[x_{\alpha_{1},0}^{+},x_{\alpha_{2},0}^{+}],x_{\alpha_{2},0}^{-}] +
[[x_{\alpha_{1},0}^{+},x_{\alpha_{2},0}^{+}],x_{\alpha_{2},0}^{-}]h_{\alpha_{2},0} \\
&+x_{\alpha_{2},0}^{-}[[x_{\alpha_{1},0}^{+},x_{\alpha_{2},0}^{+}],h_{\alpha_{2},0}]) =
\frac{\hbar}{2}(-[x_{\alpha_{1},0}^{+},x_{\alpha_{2},0}^{+}]x_{\alpha_{2},0}^{-} - x_{\alpha_{2},0}^{-}[x_{\alpha_{1},0}^{+},x_{\alpha_{2},0}^{+}] +\\
&+ h_{\alpha_{2},0}x_{\alpha_{1},0}+x_{\alpha_{1},0}h_{\alpha_{2},0}) =  \frac{\hbar}{2}(-\{[x_{\alpha_{1},0}^{+};x_{\alpha_{2},0}^{+}];x_{\alpha_{2},0}^{-}\} + \{h_{\alpha_{2},0}, x_{\alpha_{\alpha_{1},0}}\}).
\end{split}
\end{equation}

Combining this relation with (\ref{rel:101}),(\ref{equ:100}) and (\ref{equ:101}) we obtain
\begin{equation}
\label{midres:2}
\begin{split}
&[\psi(x_{\beta_1,0}^+),\psi(x_{\beta_2,1}^+)]= -x_{\alpha_1,1}^+ + \frac{\hbar}{2}\{h_{\alpha_2,0},x_{\alpha_2,0}^+\}+\frac{\hbar}{2}(\{[x_{\alpha_{1},0}^{+}; x_{\alpha_{2},0}^{+}];x_{\alpha_{2},0}^{-}\}) - \\
&-\frac{\hbar}{2}\{h_{\alpha_{2},0},x_{\alpha_{\alpha_{1},0}}\} = -x_{\alpha_1,1}^++\frac{\hbar}{2}\{[x_{\alpha_{1},0}^{+};x_{\alpha_{2},0}^{+}];x_{\alpha_{2},0}^{-}\}.
\end{split}
\end{equation}
Using (\ref{midres:1}) and (\ref{midres:2}) we obtain
\begin{equation}
\label{midresu:1}
[\psi(x_{\beta_1,1}^+),\psi(x_{\beta_2,0}^+)]-[\psi(x_{\beta_1,0}^+),\psi(x_{\beta_2,1}^+)]=-\frac{\hbar}{2}\{[x_{\alpha_{1},0}^{+};x_{\alpha_{2},0}^{+}];x_{\alpha_{2},0}^{-}\}.
\end{equation}
Using (\ref{midres:3}) and (\ref{midresu:1}) we prove that $\psi$ preserves relation (\ref{rel:39}).
Thus, we proved the lemma \ref{lemma:4.10} for the case $i=1$, $j=2$.
By the definition of map $\psi$ we obtain the following:
\[
[\psi(x_{\beta_2,1}^+),\psi(x_{\beta_2,0}^+)]=[x_{\alpha_{2},1}^{-}+\frac{\hbar}{2}\{x_{\alpha_2,0}^-,h_{\alpha_{2,0}}\},x_{\alpha_{2},0}^{-}]=[x_{\alpha_{2},1}^{-},x_{\alpha_{2},0}^{-}].
\]
Similarly we obtain:
\[
[\psi(x_{\beta_2,0}^+),\psi(x_{\beta_2,1}^+)]=[x_{\alpha_{2},0}^{-},x_{\alpha_{2},1}^{-}+\frac{\hbar}{2}\{x_{\alpha_2,0}^-,h_{\alpha_{2,0}}\}]=[x_{\alpha_{2},0}^{-},x_{\alpha_{2},1}^{-}].
\]
Using relation (\ref{rel:39}), we obtain:
\[
[\psi(x_{\beta_2,1}^+),\psi(x_{\beta_2,0}^+)]-[\psi(x_{\beta_2,0}^+),\psi(x_{\beta_2,1}^+)]=0.
\]
Thus, we proved the lemma \ref{lemma:4.10} for the case $i=2$, $j=2$.
Combining these cases, we prove the lemma \ref{lemma:4.10}.
\end{proof}

\begin{lemma}
\label{lemma:4.11}
    The relation $(\ref{rel:37})$  holds for the map $\psi$.
\end{lemma}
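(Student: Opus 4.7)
The plan is to verify relation (\ref{rel:37}) for $\psi$ by case analysis on index pairs $(i,j) \in \{1,2,3\}^2$, following the template of Lemma \ref{lemma:4.10}. I split into the diagonal cases $i = j$, where we must show the commutator equals $\psi(h_{\beta_i,1})$, and the off-diagonal cases, where it must vanish; the two halves of (\ref{rel:37}) are symmetric under $+ \leftrightarrow -$, so it suffices to verify the first equality $[\psi(x_{\beta_i,1}^{+}), \psi(x_{\beta_j,0}^{-})] = \delta_{ij}\psi(h_{\beta_i,1})$.

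For the off-diagonal cases, such as $(i,j) = (1,2)$, I would substitute the defining formulas for $\psi$ and expand the outer super-commutator by the graded Jacobi identity. Each resulting inner bracket of the form $[x_{\alpha_i,1}^{+}, x_{\alpha_j,0}^{-}]$ with $i \neq j$ vanishes by (\ref{rel:37}) inside $Y_{\hbar}(\widehat{sl}(m|n,\Pi))$, while terms of the form $[[x_{\alpha_i,1}^{+}, x_{\alpha_2,0}^{+}], x_{\alpha_2,0}^{-}]$ can be rewritten via (\ref{rel:36}) and the odd Serre relation (\ref{rel:42}), producing cancellations that leave zero.

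For the diagonal cases, the $i=j=2$ case is the simplest: with $\psi(x_{\beta_2,1}^{+}) = x_{\alpha_2,1}^{-} + \frac{\hbar}{2}\{x_{\alpha_2,0}^{-}, h_{\alpha_2,0}\}$ and $\psi(x_{\beta_2,0}^{-}) = x_{\alpha_2,0}^{+}$, the commutator collapses by (\ref{rel:36})--(\ref{rel:37}) to $-h_{\alpha_2,1}$ plus a cubic correction, which matches $\psi(h_{\beta_2,1}) = -\tilde{h}_{\alpha_2,1} + \frac{\hbar}{2} h_{\alpha_2,0}^2$ after converting $\tilde{h}$ to $h$. The case $i=j=1$ (with $i=j=3$ symmetric) requires more care: I must expand $[[x_{\alpha_1,1}^{+}, x_{\alpha_2,0}^{+}],\, -[x_{\alpha_1,0}^{-}, x_{\alpha_2,0}^{-}]]$ by the graded Jacobi identity four times, reduce the inner brackets using (\ref{rel:36})--(\ref{rel:38}), and invoke (\ref{rel:39}) to produce both $h_{\alpha_1,1}$ and $h_{\alpha_2,1}$ together with the expected $\hbar$-corrections. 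The target is $\psi(h_{\beta_1,1}) = \tilde{h}_{\alpha_1,1} + \tilde{h}_{\alpha_2,1} + \frac{\hbar}{2}\{x_{\alpha_2,0}^{+}, x_{\alpha_2,0}^{-}\} + \frac{\hbar}{2}(h_{\alpha_1,0} + h_{\alpha_2,0})^2$, where the last quadratic comes from $h_{\beta_1,1} = \tilde{h}_{\beta_1,1} + \frac{\hbar}{2} h_{\beta_1,0}^2$ and $\psi(h_{\beta_1,0}) = h_{\alpha_1,0} + h_{\alpha_2,0}$.

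The main obstacle, as in Lemma \ref{lemma:4.10}, will be the careful tracking of all $\hbar$-dependent correction terms in the $i=j=1$ case. In particular, the anticommutator $\frac{\hbar}{2}\{x_{\alpha_2,0}^{+}, x_{\alpha_2,0}^{-}\}$ appearing in $\psi(\tilde{h}_{\beta_1,1})$ and the quadratic $\frac{\hbar}{2}(h_{\alpha_1,0} + h_{\alpha_2,0})^2$ must both emerge exactly from the super-Jacobi expansion, with every term produced by (\ref{rel:39}) accounted for and the signs governed by the odd parity of $\alpha_2$. Once the diagonal computation closes, the off-diagonal cases and the second equality of (\ref{rel:37}) are routine, completing the proof.
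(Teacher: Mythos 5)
Your plan follows essentially the same route as the paper's proof: a case-by-case verification over $(i,j)$, rewriting $\psi$ of the first-order generators via $[\tilde{h}_{\cdot,1},\cdot]$ and relation (\ref{rel:40}), expanding with the graded Jacobi identity, reducing by (\ref{rel:36})--(\ref{rel:39}), and matching against $\psi(h_{\beta_i,1})$ computed from $\tilde{h}=h-\frac{\hbar}{2}h^2$ --- the correction terms you flag ($\frac{\hbar}{2}\{x_{\alpha_2,0}^{+},x_{\alpha_2,0}^{-}\}$ and the quadratic in $h_{\alpha_1,0}+h_{\alpha_2,0}$) are exactly the ones the paper tracks in the $i=j=1$ case. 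The only small point to adjust: since $\psi(x_{\beta_2,0}^{-})=x_{\alpha_2,0}^{+}$ (the odd reflection flips the sign of $\pm$ on the reflected root), the off-diagonal case $(1,2)$ collapses to $[[x_{\alpha_1,1}^{+},x_{\alpha_2,0}^{+}],x_{\alpha_2,0}^{+}]=0$ directly by the Serre relation, with no cancellation argument needed.
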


\begin{proof}

By the definition of map $\psi$ and relation (\ref{rel:40}) we obtain
\begin{equation}
\begin{split}
    &\phantom{{}={}}[\psi(x_{\beta_{1},1}^{+}),\psi(x_{\beta_{1},0}^{-})] = -[[\psi(\tilde{h}_{\beta_2,1}),\psi(x_{\beta_1,0}^+)], \psi(x_{\beta_1,0}^-)]\\
    &=[[\tilde{h}_{\alpha_{2},1}, [x_{\alpha_{1},0}^{+},x_{\alpha_{2},0}^{+}]], [x_{\alpha_{1},0}^{-},x_{\alpha_{2},0}^{-}]].
    \end{split}
\end{equation}
By the relation (\ref{rel:40}) we obtain
\begin{equation}
[[\tilde{h}_{\alpha_{2},1},[x_{\alpha_{1},0}^{+},x_{\alpha_{2},0}^{+}]],[x_{\alpha_{1},0}^{-},x_{\alpha_{2},0}^{-}]] = [[x_{\alpha_{1},1}^{+},x_{\alpha_{2},0}^{+}],[x_{\alpha_{1},0}^{-},x_{\alpha_{2},0}^{-}]].
\end{equation}
By the Jacobi identity we obtain
\begin{equation}
\label{equ:114}
\begin{split}
&\phantom{{}={}}[[x_{\alpha_{1},1}^{+},x_{\alpha_{2},0}^{+}],[x_{\alpha_{1},0}^{-},x_{\alpha_{2},0}^{-}]]=\\&=[[[x_{\alpha_{1},1}^{+},x_{\alpha_{2},0}^{+}],x_{\alpha_{1},0}^{-}],x_{\alpha_{2},0}^{-}]+[x_{\alpha_{1},0}^{-},[[x_{\alpha_{1},1}^{+},x_{\alpha_{2},0}^{+}]],x_{\alpha_{2},0}^{-}]]\\
&=[[[x_{\alpha_{1},0}^{-},x_{\alpha_{2},0}^{+}],x_{\alpha_{1},1}^{+}]-[x_{\alpha_{2},0}^{+},[x_{\alpha_{1},0}^{-},x_{\alpha_{1},1}^{+}]],x_{\alpha_{2},0}^{-}]+[x_{\alpha_{1},0}^{-},[[x_{\alpha_{2},0}^{-},x_{\alpha_{2},0}^{+}],x_{\alpha_{1},1}]\\
&-[x_{\alpha_{2},0}^{+},[x_{\alpha_{2},0}^{-}.x_{\alpha_{1},1}^{+}]]].
\end{split}
\end{equation}
By applying relations (\ref{rel:36}) and (\ref{rel:37}) to the right hand side of (\ref{equ:114}) we obtain
\begin{equation}
\label{equ:115}
\begin{split}
&\phantom{{}={}}[[[x_{\alpha_{1},0}^{-},x_{\alpha_{2},0}^{+}],x_{\alpha_{1},1}^{+}] - [x_{\alpha_{2},0}^{+}, [x_{\alpha_{1},0}^{-},x_{\alpha_{1},1}^{+}]], x_{\alpha_{2},0}^{-}] + [x_{\alpha_{1},0}^{-}, [[x_{\alpha_{2},0}^{-}, x_{\alpha_{2},0}^{+}], x_{\alpha_{1},1}^{+}]  \\
&-[x_{\alpha_{2},0}^{+},[x_{\alpha_{2},0}^{-}, x_{\alpha_{1},1}^{+}]]] = [0-[x_{\alpha_{2},0}^{+},-h_{\alpha_{1},1}], x_{\alpha_{2},0}^{-}]+[x_{\alpha_{1},0}^{-}, [-h_{\alpha_{2},0},x_{\alpha_{1},1}^{+}]-0].
\end{split}
\end{equation}
By applying relations (\ref{rel:38}) and (\ref{rel:37}) to the second term of right hand side of (\ref{equ:115})
we obtain
\begin{equation}
\label{equ:116}
[x_{\alpha_{1},0}^{-},[-h_{\alpha_{2},0},x_{\alpha_{2},1}^{+}]] = -[x_{\alpha_{1},0}^{-},x_{\alpha_{1},1}^{+}] = h_{\alpha_{1},1}.
\end{equation}
By the definition of $\tilde{h}_{\alpha_{1},1}$ and the relations (\ref{rel:40}),(\ref{rel:39}) and (\ref{rel:38}) to the first term of the right hand side of the (\ref{equ:115})  we obtain
\begin{equation}
\label{equ:117}
\begin{split}
&[[x_{\alpha_{2},0}^{+},-h_{\alpha_{1},1}],x_{\alpha_{2},0}^{-}] = -[[x_{\alpha_{2},0}^{+},\tilde{h}_{\alpha_{1},1}+\frac{\hbar}{2}h_{\alpha_{1},0}^{2}],x_{\alpha_{2},0}^{-}] = [x_{\alpha_{2},1}^{+},x_{\alpha_{2},0}^{-}] + \\
&+\frac{\hbar}{2}[\{h_{\alpha_{1},0},x_{\alpha_{2},0}^{+}\},x_{\alpha_{2},0}^{-}]=h_{\alpha_{2},1} + \frac{\hbar}{2}[\{h_{\alpha_{1},0},x_{\alpha_{2},0}^{+}\},x_{\alpha_{2},0}^{-}].
\end{split}
\end{equation}
By applying (\ref{rel:36}), (\ref{rel:38}) and Jacobi identity to the second term of the right hand side of (\ref{equ:117})
we obtain
\begin{equation}
\label{equ:118}
\begin{split}
&\phantom{{}={}}\frac{\hbar}{2}[\{h_{\alpha_{1},0},x_{\alpha_{2},0}^{+}\},x_{\alpha_{2},0}^{-}]=\frac{\hbar}{2}[h_{\alpha_{1},0}x_{\alpha_{2},0}^{+}+x_{\alpha_{2},0}^{+}h_{\alpha_{1},0},x_{\alpha_{2},0}^{-}]\\
&=\frac{\hbar}{2}([h_{\alpha_{1},0},x_{\alpha_{2},0}^{-}]x_{\alpha_{2},0}^{+}
+h_{\alpha_{1},0}[x_{\alpha_{2},0}^{+},x_{\alpha_{2},0}^{-}]
+[x_{\alpha_{2},0}^{+},x_{\alpha_{2},0}^{-}]h_{\alpha_{1},0}\\
&+x_{\alpha_{2},0}^{+}[h_{\alpha_{1},0},x_{\alpha_{2},0}^{-}])=\frac{\hbar}{2}\{x_{\alpha_{2},0}^{+},x_{\alpha_{2},0}^{-}\}+\frac{\hbar}{2}\{h_{\alpha_{1},0},h_{\alpha_{2},0}\}.
\end{split}
\end{equation}
By the definition of $h_{\beta_{1},1}$, $h_{\alpha_{1},1}$ and map $\psi$ we obtain
\begin{equation}
\label{equ:119}
\begin{split}
&\phantom{{}={}}\psi(h_{\beta_{1},1})=\psi(\tilde{h}_{\beta_{1},1}-\frac{\hbar}{2}h_{\beta_{1},0}^{2})\\
&=\tilde{h}_{\alpha_1}+\tilde{h}_{\alpha_2}+\frac{\hbar}{2}\{x_{\alpha_{2},0}^{+},x_{\alpha_{2},0}^{-}\} - \frac{\hbar}{2}(h_{\alpha_{1},0}+h_{\alpha_{2},0})^{2}\\
&=h_{\alpha_{1},1}+h_{\alpha_{2},1}+ \frac{\hbar}{2}\{x_{\alpha_{2},0}^{+},x_{\alpha_{2},0}^{-}\} - \frac{\hbar}{2}\{h_{\alpha_{1},0},h_{\alpha_{2},0}\}.
\end{split}
\end{equation}
Thus, by combining (\ref{equ:116}), (\ref{equ:117}), (\ref{equ:118}) and (\ref{equ:119}) we obtain that relation (\ref{rel:36}) holds. Thus, we proved the lemma (\ref{lemma:4.11}) for the case $i=1$, $j=1$.
\begin{equation}
\begin{split}
    &\phantom{{}={}}[\psi(x_{\beta_{1},1}^{+}),\psi(x_{\beta_{2},0}^{-})] = -[[\psi(\tilde{h}_{\beta_2,1}),\psi(x_{\beta_1,0}^+)], \psi(x_{\beta_2,0}^-)]\\
    &=[[\tilde{h}_{\alpha_{2},1}, [x_{\alpha_{1},0}^{+},x_{\alpha_{2},0}^{+}]], x_{\alpha_{2},0}^{+}].
\end{split}
\end{equation}
By the relation (\ref{rel:40}) we obtain
\begin{equation}
    [[\tilde{h}_{\alpha_{2},1}, [x_{\alpha_{1},0}^{+},x_{\alpha_{2},0}^{+}]], x_{\alpha_{2},0}^{+}]=[[x_{\alpha_{1},1}^{+},x_{\alpha_{2},0}^{+}],x_{\alpha_{2},0}^{+}]=0.
\end{equation}
The second equality is obtained by Serre relation.
Thus, we proved the lemma (\ref{lemma:4.11}) for case $i=1$, $j=2$.
\begin{equation}
\begin{split}
    &\phantom{{}={}}[\psi(x_{\beta_{2},1}^{+}),\psi(x_{\beta_{2},0}^{-})] =-[[\psi(\tilde{h}_{\beta_1,1}),\psi(x_{\beta_2,0}^+)], \psi(x_{\beta_2,0}^-)]=\\
    &-[[\tilde{h}_{\alpha_{1},1}+\tilde{h}_{\alpha_{2},1}+\frac{\hbar}{2}\{x_{\alpha_2,0}^-,x_{\alpha_2,0}^+\}, x_{\alpha_{2},0}^{-}], x_{\alpha_{2},0}^{+}]=\\
    &-[x_{\alpha_2,1}^-+\frac{\hbar}{2}\{x_{\alpha_2,0}^-,h_{\alpha_2,0}\},x_{\alpha_2,0}^+].
\end{split}
\end{equation}
Using relations (\ref{rel:41}), (\ref{rel:40}) we obtain the following:
\begin{equation}
    -[x_{\alpha_2,1}^--\frac{\hbar}{2}\{x_{\alpha_2,0}^-,h_{\alpha_2,0}\},x_{\alpha_2,0}^+]=-h_{\alpha_{2},1}-\frac{\hbar}{2}\{h_{\alpha_2,0},h_{\alpha_2,0}\}.
\end{equation}
By the definition of map $\psi$ we obtain the following:
\begin{equation}
    \psi(h_{\beta_{2},1})=\psi(\tilde{h}_{\beta_{2},1}+\frac{\hbar}{2}h_{\alpha_{2},0}^2)=-\tilde{h}_{\beta_{2},1}+\frac{\hbar}{2}h_{\alpha_{2},0}^2=-h_{\alpha_{2},1}-\frac{\hbar}{2}\{h_{\alpha_2,0},h_{\alpha_2,0}\}.
\end{equation}
Thus, we proved the lemma (\ref{lemma:4.11}) for the case $i=2$, $j=2$.
Combining these cases, we prove the lemma (\ref{lemma:4.11}).
\end{proof}

The inverse map $\psi^{-1}$ is a map induced by the simple reflection $s_{-\alpha_i}$. The proof of the relations in the $\psi^{-1}$ case is similar to the $\psi$ case; hence, we obtain an isomorphism map for $T_{s_{\alpha_i}}$. Since every element of the Weyl groupoid can be presented as a product of elements induced by simple reflections, we can construct a sequence of isomorphism maps for every element of the groupoid. Thus, we have proved the statement.\

\vspace{0.1cm}

\subsection{Proof of the theorem \ref{thm:copr1}}

Let us prove Theorem \ref{thm:copr1}. Let $\Delta$ and $\Delta'$ be the coproducts of the super-Yangians $Y_{\hbar}(\widehat{sl}(m|n, \Pi))$ and $Y_{\hbar}(\widehat{sl}(m|n, \Pi'))$, respectively. We calculate the action of the Weyl group on the affine super Yangian as a continuation of the action of the Weyl groupoid $\hat{W}$ on the Lie superalgebra using the quasiclassical approximation and the following formula:
\begin{equation}\label{eq:cop1}
(T_{\alpha_i} \otimes T_{\alpha_i})\Delta = \Delta'(T_{\alpha_i}).
\end{equation}

We need the following lemma.

\begin{lemma} \label{lm:5.1}
Let $\alpha_i$ be an odd simple root. Then 
\begin{equation}
(T_{\alpha_i} \otimes T_{\alpha_i})(\Omega_+) =x_{\beta_i,0}^+\otimes x_{\beta_i,0}^--x_{\beta_i,0}^-\otimes x_{\beta_i,0}^++ \Omega_+.
\end{equation}
\end{lemma}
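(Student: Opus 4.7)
We reduce to a degree-zero (classical) computation: $\Omega_+$ is written entirely in terms of zero-mode generators, and by construction $T_{\alpha_i}$ restricted to the zero-mode subspace acts through the Lie superalgebra isomorphism $L_i=\hat{s}_{\alpha_i}\colon \mathfrak{g}(A)\to\mathfrak{g}(A_1)$ already introduced, the nontrivial data being $T_{\alpha_i}(x_{\alpha_i,0}^{\pm})=-x_{\beta_i,0}^{\mp}$ on the reflection generator itself and the explicit formulas on $x_{\alpha_{i\pm1},0}^{\pm}$.

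The main combinatorial input is the standard effect of an odd reflection on roots: only $\alpha_i$ flips sign, so $s_i\colon\Delta^+(\Pi)\setminus\{\alpha_i\}\to\Delta^+(\Pi')\setminus\{\beta_i\}$ is a bijection (with $\beta_i=-\alpha_i$). Accordingly, I would decompose
\[
\Omega_+=C+x_{\alpha_i,0}^-\otimes x_{\alpha_i,0}^++\sum_{\alpha\in\Delta^+(\Pi),\,\alpha\neq\alpha_i}\ \sum_{k} x^{k}_{-\alpha,0}\otimes x^{k}_{\alpha,0},
\]
where $C$ is the Cartan piece, and check each summand separately. For the Cartan piece, $L_i$ preserves the invariant bilinear form on $\mathfrak{h}$, hence $(L_i\otimes L_i)(C)=C$ under the canonical identification of Cartan subalgebras. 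For each generic summand with $\alpha\neq\alpha_i$, the map $L_i$ restricts to an isomorphism $\mathfrak{g}_{\pm\alpha}\to\mathfrak{g}_{\pm s_i(\alpha)}$ that preserves the pairing between them; consequently $L_i\otimes L_i$ sends the canonical invariant tensor $\sum_k x^{k}_{-\alpha,0}\otimes x^{k}_{\alpha,0}$ to the canonical invariant tensor for the positive root $s_i(\alpha)\in\Delta^+(\Pi')$, and summing over $\alpha$ produces precisely the non-$\beta_i$ positive-root contribution to the half-Casimir for $\Pi'$.

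The only interesting piece is the $\alpha_i$ term. Using $T_{\alpha_i}(x_{\alpha_i,0}^{\pm})=-x_{\beta_i,0}^{\mp}$,
\[
(T_{\alpha_i}\otimes T_{\alpha_i})(x_{\alpha_i,0}^-\otimes x_{\alpha_i,0}^+)=x_{\beta_i,0}^+\otimes x_{\beta_i,0}^-,
\]
whereas the term assigned to the new simple positive root $\beta_i$ inside $\Omega_+$ (for $\Pi'$) is $x_{\beta_i,0}^-\otimes x_{\beta_i,0}^+$. The discrepancy is exactly the antisymmetric expression $x_{\beta_i,0}^+\otimes x_{\beta_i,0}^--x_{\beta_i,0}^-\otimes x_{\beta_i,0}^+$, and combining the three pieces yields the stated formula.

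The point that requires care, and which I would write out explicitly, is verifying step (ii), i.e.\ that the various signs and rescalings forced on $L_i$ by the adjacency relations around $\alpha_i$ (most notably the brackets $L_i(x_{\alpha_{i\pm1},0}^{\pm})=\pm[x_{\beta_i,0}^{\pm},x_{\beta_{i\pm1},0}^{\pm}]$, which affect higher-height positive root vectors) genuinely cancel inside the paired tensors $x_{-\alpha,0}\otimes x_{\alpha,0}$. This is guaranteed abstractly by the fact that the invariant tensor in $\mathfrak{g}_{-\alpha}\otimes\mathfrak{g}_\alpha$ is basis-independent and $L_i$ is an isometry for the invariant form, so no sign can be produced; making this completely explicit for roots near $\alpha_i$ is the only real bookkeeping step.
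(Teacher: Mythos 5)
Your argument is correct and is essentially the one the paper relies on: the paper's proof merely notes that $T_{\alpha_i}$ acts on the root vectors by the same formulas irrespective of parity and refers to Lemma 3.13 of Kodera, where precisely this decomposition of $\Omega_+$ over positive roots is carried out (Cartan part invariant, the roots in $\Delta^+\setminus\{\alpha_i\}$ permuted with the paired tensors preserved, and the single flipped root $\alpha_i\mapsto-\alpha_i$ producing the discrepancy $x_{\beta_i,0}^+\otimes x_{\beta_i,0}^--x_{\beta_i,0}^-\otimes x_{\beta_i,0}^+$). Your write-up simply supplies the details that the paper delegates to that reference.
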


\begin{proof}
The action of the map $T_{\alpha_i}$ on the generators $x_{\alpha_j}^{\pm}$ does not depend on the oddity of the roots $\alpha_i, \alpha_j$. Thus, the proof of this lemma is similar to the proof of Lemma 3.13 in \cite{Kodera}. 
\end{proof}

We calculate the left and right hand on the generators of first order of affine super Yangian $Y_{\hbar}(\widehat{sl}(m|n, \Pi))$. 
We will use the following notation: 
$$ \Box(a) : = a \otimes 1 + 1 \otimes a, $$
for $a \in  Y_{\hbar}(\widehat{sl}(m|n, \Pi))$. 

Using the qusiclassical approximation we obtain: $T_{\alpha_i}(\tilde{h}_{\alpha_i})=-\tilde{h}_{\alpha_i}+p$, where $p$ is a polynomial of generators of zero order. Using this, we obtain the following:
\begin{equation}
\label{equ:6.72}
\begin{split}
    &\phantom{{}={}}(T_{\alpha_i} \otimes T_{\alpha_i})\Delta(\tilde{h}_{\alpha_{i},1})= (T_{\alpha_i} \otimes T_{\alpha_i})(\Box(\tilde{h}_{\alpha_{i},1})+\hbar[{h}_{\alpha_{i},0}\otimes1,\Omega_{+}])\\
    &= (T_{\alpha_i} \otimes T_{\alpha_i})(\Box(\tilde{h}_{\alpha_{i},1}))+\hbar[-{h}_{\beta_{i},0}\otimes1,x_{\beta_i,0}^+\otimes x_{\beta_i,0}^--x_{\beta_i,0}^-\otimes x_{\beta_i,0}^++\Omega_+]\\
    &=\Box(T_{\alpha_i}(\tilde{h}_{\alpha_i,1}))+\hbar[-h_{\beta_i,0},\Omega_+]=-\Box(\tilde{h}_{\beta_i,1})+\Box(p)+\hbar[-h_{\beta_i,0}\otimes 1,\Omega_+].
\end{split}
\end{equation}
Using the fact that $[h_{\beta_i,0},x_{\beta_i,0}^{\pm}]=0$ we deduce the following:
\begin{equation}
\begin{split}
    &\phantom{{}={}}(T_{\alpha_i} \otimes T_{\alpha_i})(\Box(\tilde{h}_{\alpha_{i},1}))+\hbar[-{h}_{\beta_{i},0}\otimes1,x_{\beta_i,0}^+\otimes x_{\beta_i,0}^--x_{\beta_i,0}^-\otimes x_{\beta_i,0}^++\Omega_+]\\
    &=\Box(T_{\alpha_i}(\tilde{h}_{\alpha_i,1}))+\hbar[-h_{\beta_i,0},\Omega_+]=-\Box(\tilde{h}_{\beta_i,1})+\Box(p)+\hbar[-h_{\beta_i,0}\otimes 1,\Omega_+].
\end{split}
\end{equation}
Computing the right-hand side, we obtain the following:

\begin{equation}
\label{equ:6.73}
\begin{split}
&\Delta(T_{\alpha_{i}}(\tilde{h}_{\alpha_{i},1}))=\Delta(-\tilde{h}_{\beta_{i},1}+p)= -\Box(\tilde{h}_{\beta_i,1})+\Delta(p)+\hbar[-h_{\beta_i,0}\otimes1 ,\Omega_+].\\
&
\end{split}
\end{equation}

Subtracting \ref{equ:6.72} from \ref{equ:6.73} we obtain

\begin{equation}
\label{equ:6.74}
\begin{split}
&\Delta(T_{\alpha_{i}}(\tilde{h}_{\alpha_{i+1},1}))-(T_{\alpha_i} \otimes T_{\alpha_i})\Delta(\tilde{h}_{\alpha_{i+1},1})=\Delta(p)-\Box(p)=0.\\
&
\end{split}
\end{equation}
Hence $p=0$.

Using the quasiclassical approximation, we obtain $T_{\alpha_i}(\tilde{h}_{\alpha_{i-1}})=\tilde{h}_{\beta_i}+\tilde{h}_{\beta_{i-1}}+p$, where $p$ is a polynomial of generators of zero order. Using this fact, we obtain the following:

\begin{equation}
\label{equ:6.75}
\begin{split}
&\phantom{{}={}}(T_{\alpha_i} \otimes T_{\alpha_i})\Delta(\tilde{h}_{\alpha_{i-1},1})=(T_{\alpha_i} \otimes T_{\alpha_i})(\Box(\tilde{h}_{\alpha_{i-1},1})+\hbar[{h}_{\alpha_{i-1},0}\otimes1,\Omega_{+}])\\
&=\Box(T_{\alpha_i}(\tilde{h}_{\alpha_{i-1},1}))+\hbar[-(h_{\beta_i,0}+h_{\beta_{i-1},0})\otimes1,x_{\beta_i,0}^+\otimes x_{\beta_i,0}^--x_{\beta_i,0}^-\otimes x_{\beta_i,0}^++\Omega_+]\\
&=\Box(T_{\alpha_i}(\tilde{h}_{\alpha_{i-1},1}))
    +\hbar[-h_{\beta_i,0}\otimes 1,\Omega_+]+\hbar[-h_{\beta_{i-1},0}\otimes 1,\Omega_+]+\hbar(x_{\beta_i,0}^+\otimes x_{\beta_i,0}^-+\\
    &+x_{\beta_i,0}^-\otimes x_{\beta_i,0}^+).
\end{split}
\end{equation}
Computing the right-hand side we obtain the following:

\begin{equation}
\label{equ:6.76}
\begin{split}
&\phantom{{}={}}\Delta(T_{\alpha_{i}}(\tilde{h}_{\alpha_{i-1},1}))=\Delta(\tilde{h}_{\beta_{i},1}+\tilde{h}_{\beta_{i-1},1}+p)\\ &=\Box(\tilde{h}_{\beta_i,1})+\Box(\tilde{h}_{\beta_{i-1},1})+\Delta(p)+\hbar[h_{\beta_i,0}\otimes 1,\Omega_+]+\hbar[h_{\beta_{i-1} ,0}\otimes 1,\Omega_+].
\end{split}
\end{equation}

Subtracting \ref{equ:6.75} from \ref{equ:6.76} we obtain the following:

\begin{equation}
\label{equ:6.77}
\begin{split}
&\phantom{{}={}}\Delta(T_{\alpha_{i}}(\tilde{h}_{\alpha_{i+1},1}))-(T_{\alpha_i} \otimes T_{\alpha_i})\Delta(\tilde{h}_{\alpha_{i+1},1})=\Delta(p)-\Box(p)\\
&=\hbar(x_{\beta_i,0}^+\otimes x_{\beta_i,0}^-+x_{\beta_i,0}^-\otimes x_{\beta_i,0}^+).\\
\end{split}
\end{equation}
Hence $p=\frac{\hbar}{2}\{x_{\beta_i,0}^-,x_{\beta_i,0}^+\}$

Using the quasiclassical approximation, we obtain: $T_{\alpha_i}(x_{\alpha_{i},1}^+)=x_{\beta_{i},1}^-+p$, where $p$ is a polynomial of zero-order generators. From this, we derive the following:

\begin{equation}
\label{equ:6.78}
\begin{split}
    &\phantom{{}={}}(T_{\alpha_i} \otimes T_{\alpha_i})\Delta(x_{\alpha_{i},1}^+)= (T_{\alpha_i} \otimes T_{\alpha_i})(\Box(x_{\alpha_{i},1}^+)-\hbar[1\otimes x_{\alpha_{i},0}^+,\Omega_{+}])\\
    &= (T_{\alpha_i} \otimes T_{\alpha_i})(\Box({x}_{\alpha_{i},1}^+))+\hbar[1\otimes x_{\beta_{i},0}^+,x_{\beta_i,0}^+\otimes x_{\beta_i,0}^--x_{\beta_i,0}^-\otimes x_{\beta_i,0}^++\Omega_+]\\
    &=\Box(T_{\alpha_i}({x}_{\alpha_i,1}^+))-\hbar[1\otimes x_{\beta_i,0}^+,\Omega_+]-x_{\beta_{i,0}}^-\otimes h_{{\beta_{i,0}}}\\
    &=\Box({x}_{\beta_i,1}^-)+\Box(p)+\hbar[1 \otimes x_{\beta_i,0}^+ ,\Omega_+]-x_{\beta_{i,0}}^-\otimes h_{{\beta_{i,0}}}.
\end{split}
\end{equation}
Computing the right-hand side, we obtain the following:

\begin{equation}
\label{equ:6.79}
\begin{split}
&\Delta(T_{\alpha_{i}}(x_{\alpha_{i},1}^+))=\Delta({x}_{\beta_{i},1}^-+p)= \Box({x}_{\beta_i,1}^-)+\Delta(p)+\hbar[x_{\beta_i,0}^-\otimes1,\Omega_+]=\\
&=\Box({x}_{\beta_i,1}^-)+\Delta(p)-\hbar[1 \otimes x_{\beta_i,0}^-,\Omega_+]+h_{\beta_i,0}\otimes x_{\beta_{i}}^-.
\end{split}
\end{equation}

Subtracting \ref{equ:6.78} from \ref{equ:6.79} we obtain the following:
\begin{equation}
    (T_{\alpha_i} \otimes T_{\alpha_i})\Delta(x_{\alpha_{i},1}^+)-\Delta(T_{\alpha_{i}}(x_{\alpha_{i},1}^+))=-(h_{\beta_i,0}\otimes x_{\beta_{i}}^- + x_{\beta_{i,0}}^-\otimes h_{\beta_{i,0}}).
\end{equation}
Hence $p=\frac{\hbar}{2}\{x_{\beta_i,0}^-,h_{\beta_{i,0}}\}$.

Using the qusiclassical approximation, we obtain \[
T_{\alpha_i}(x_{\alpha_{i-1},1}^+)=[x_{\beta_{i},0}^+,x_{\beta_{i-1},1}^+]+p,\] 
where $p$ is a polynomial of generators of zero order. Using this fact, we obtain the following:

\begin{equation}
\label{equ:6.80}
\begin{split}
    &(T_{\alpha_i} \otimes T_{\alpha_i})\Delta(x_{\alpha_{i},1}^+)= (T_{\alpha_i} \otimes T_{\alpha_i})(\Box({x}_{\alpha_{i},1}^+))-\hbar[1\otimes T_{\alpha_i}(x_{\alpha_{i-1},0}^+),x_{\beta_i,0}^+\otimes x_{\beta_i,0}^--\\
    &-x_{\beta_i,0}^-\otimes x_{\beta_i,0}^++\Omega_+]=\Box(T_{\alpha_i}({x}_{\alpha_i,1}^+))-\hbar[1\otimes T_{\alpha_i}(x_{\alpha_{i-1},0}^+),\Omega_+]+x_{\beta_{i,0}}^+\otimes x_{{\beta_{i-1,0}}}^+.
\end{split}
\end{equation}
Computing the right-hand side, we obtain the following:

\begin{equation}
\label{equ:6.81}
\begin{split}
&\phantom{{}={}}\Delta(T_{\alpha_{i}}(x_{\alpha_{i},1}^+))=\Delta([{x}_{\beta_{i},0}^+,{x}_{\beta_{i-1},1}^+]+p)\\
&=[\Box({x}_{\beta_{i},0}^+),\Box({x}_{\beta_{i-1},1}^+)-\hbar[1 \otimes x_{\beta_i,0}^+,\Omega_+]]+\Delta(p)\\
&=\Box([{x}_{\beta_{i},0}^+,{x}_{\beta_{i-1},1}^+])+\Delta(p)+[1\otimes x_{\beta_{i-1,0}}^+,[\Box( x_{\beta_{i-1,0}}^+),\Omega_+]]-\\&-[[\Box({x}_{\beta_{i},0}^+),1\otimes{x}_{\beta_{i-1},0}^+],\Omega_+]\\
&=[1 \otimes T_{\alpha_i}(x_{\alpha_{i-1},1}^+),\Omega_+]+\Box([{x}_{\beta_{i},0}^+,{x}_{\beta_{i-1},1}^+])+\Delta(p)+[1\otimes x_{{\beta_{i-1,0}}}^+,-x_{{\beta_{i,0}}}^+\otimes h_{\beta_i,0}]\\
&=\Box([{x}_{\beta_{i},0}^+,{x}_{\beta_{i-1},1}^+])+\Delta(p)+[1 \otimes T_{\alpha_i}(x_{\alpha_{i-1},1}^+),\Omega_+]+x_{\beta_{i,0}}^+\otimes x_{{\beta_{i-1,0}}}^+.
\end{split}
\end{equation}
Subtracting \ref{equ:6.80} from \ref{equ:6.81} we obtain the following:

\begin{equation}
    (T_{\alpha_i} \otimes T_{\alpha_i})\Delta(x_{\alpha_{i-1},1}^+)-\Delta(T_{\alpha_{i}}(x_{\alpha_{i-1},1}^+))=0.
\end{equation}
Hence $p=0$ and theorem  \ref{thm:copr1} is proven.


\subsection{Proof of the theorems \ref{thm:copr2} }

Theorem \ref{thm:copr2} follows immediately from the previous theorem \ref{thm:copr1}.








\subsection{Proof of the theorem \ref{thm:copr30}}.

Now we can prove theorem \ref{thm:copr30}.  We note that  that even reflections $s_i$ preserve the system of simple roots and, as a consequence, quantum reflections $T_{s_i}$  do not change the description of the quantum superalgebra as a Hopf superalgebra, in terms of generators and defining relations, which proves the theorem \ref{thm:copr30}.



\subsection{Proof of theorem  \ref{thm_3.2}}
This theorem was proved in \cite{Ueda} for the distinguished realization of simple roots. We generalize this proof for an arbitrary system of simple roots.

We will use the following notation: $Y_{\hbar}^1(\widehat{sl}(m|n,\Pi)) : = Y^D_{\hbar}(\widehat{sl}(m|n,\Pi))$.

By the definition of $\tilde{h}_{\alpha_i,1}$ we can write (\ref{rel:30}) as
\begin{equation}
\label{rel:44}
    [\tilde{h}_{\alpha_i,1},x_{\alpha_j,r}^{\pm}]=\pm a_{ij}x_{\alpha_j,r+1}^{\pm}.
\end{equation}
By relations (\ref{rel:44}) and (\ref{rel:29}) we can write the following relations:
\begin{equation}
    x_{\alpha_i,r+1}^{\pm}=\pm \frac{1}{a_{ii}}[\tilde{h}_{\alpha_{i+1},1},x_{\alpha_i,r}], \quad h_{\alpha_1,r+1}=[x_{\alpha_i,r+1}^{+},x_{\alpha_i,0}^{-}], \quad \text{if $\alpha_i$ is even root},
\end{equation}
\begin{equation}
\label{rel:46}
    x_{\alpha_i,r+1}^{\pm}=\pm \frac{1}{a_{i+1,i}}[\tilde{h}_{\alpha_{i+1},1},x_{\alpha_i,r}], \quad h_{\alpha_1,r+1}=[x_{\alpha_i,r+1}^{+},x_{\alpha_i,0}^{-}], \quad \text{if $\alpha_i$ is odd root}
\end{equation}
for all $r \geq 1$.

\begin{lemma} \label{lem:1}
The relation (\ref{rel:27}) holds for all $i,j\in I$ in $Y_{\hbar}^D(\hat{sl}(m|n,\Pi))$. For all $i,j\in I$ we obtain
\begin{equation}
\label{rel:47}
    [\tilde{h}_{\alpha_i,1},x_{\alpha_j,r}^{\pm}]=\pm a_{ij}x_{\alpha_j,r+1}^{\pm}
\end{equation}
in $Y_{\hbar}^D(\hat{sl}(m|n,\Pi))$.\\
\end{lemma}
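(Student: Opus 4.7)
The lemma has two parts: (i) $[h_{\alpha_i,r}, h_{\alpha_j,s}]=0$ for all $r,s\geq 0$, and (ii) $[\tilde h_{\alpha_i,1}, x_{\alpha_j,r}^{\pm}] = \pm a_{ij} x_{\alpha_j,r+1}^{\pm}$ for all $r\geq 0$. Read literally inside the Drinfeld presentation, (i) is already among the defining relations (\ref{rel:27}), and (ii) reduces to a direct algebraic rewrite using (\ref{rel:30}) at $(r,s)=(0,r)$ together with (\ref{rel:28}): one gets $[h_{\alpha_i,1}, x_{\alpha_j,r}^{\pm}] = \pm a_{ij} x_{\alpha_j,r+1}^{\pm} \pm \tfrac{\hbar a_{ij}}{2}\{h_{\alpha_i,0}, x_{\alpha_j,r}^{\pm}\}$, and then subtracting $\tfrac{\hbar}{2}[h_{\alpha_i,0}^{2}, x_{\alpha_j,r}^{\pm}] = \pm\tfrac{\hbar a_{ij}}{2}\{h_{\alpha_i,0}, x_{\alpha_j,r}^{\pm}\}$ exactly cancels the anticommutator and produces (\ref{rel:44}). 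So in this reading the proof is a one-line computation recorded for later reference.

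The substantive content, which I expect is really intended for the isomorphism argument, is the reverse statement in the minimalistic $Y_{\hbar}$: once higher-order generators are introduced via the recursions displayed between (\ref{rel:44}) and the lemma, both (i) and (ii) continue to hold. Here my plan is a simultaneous induction on $r$. The base cases $r\in\{0,1\}$ are precisely the minimalistic relations (\ref{rel:35}), (\ref{rel:38}), (\ref{rel:40}). For the inductive step of (ii), I expand $x_{\alpha_j,r+1}^{\pm} = \pm a_{kj}^{-1}[\tilde h_{\alpha_k,1}, x_{\alpha_j,r}^{\pm}]$ with the appropriate auxiliary index $k$ (necessary when $a_{jj}=0$), apply super-Jacobi to
$$[\tilde h_{\alpha_i,1},[\tilde h_{\alpha_k,1}, x_{\alpha_j,r}^{\pm}]] = [[\tilde h_{\alpha_i,1},\tilde h_{\alpha_k,1}], x_{\alpha_j,r}^{\pm}] + [\tilde h_{\alpha_k,1},[\tilde h_{\alpha_i,1}, x_{\alpha_j,r}^{\pm}]],$$
and observe that the first right-hand term vanishes by (i) at low orders, while the second collapses to $\pm a_{ij}a_{kj}\, x_{\alpha_j,r+1}^{\pm}$ by the inductive hypothesis for (ii). For the inductive step of (i), I expand $h_{\alpha_i,r+1} = [x_{\alpha_i,r+1}^{+}, x_{\alpha_i,0}^{-}]$ (with the odd-root variant when needed) and reduce $[h_{\alpha_i,r+1}, h_{\alpha_j,s}]$ by super-Jacobi to brackets of the form $[x_{\alpha_i,\bullet}^{\pm}, h_{\alpha_j,s}]$; the order-$0$ pieces are governed by (\ref{rel:38}), and the higher-order pieces by the strengthened form of (ii) just established.

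The main obstacle is keeping the two inductions coherent. The recursive definition of $x_{\alpha_j,r+1}^{\pm}$ involves a choice of neighbour index $k$, so one must show that the value produced is independent of that choice; this well-definedness is essentially what part (ii) at one step higher asserts, so the two inductions are genuinely coupled and have to be run in lockstep. A secondary difficulty is correct bookkeeping of the super-signs $(-1)^{p(a)p(b)}$ from super-Jacobi in the odd-root case, together with occasional invocation of the Serre-type relation (\ref{rel:43}) to discard parasitic bracket terms that appear when both $\alpha_i$ and $\alpha_j$ are odd.
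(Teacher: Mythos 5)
Your proposal matches the paper's argument: the paper likewise first rewrites (\ref{rel:30}) as the relation $[\tilde h_{\alpha_i,1},x_{\alpha_j,s}^{\pm}]=\pm a_{ij}x_{\alpha_j,s+1}^{\pm}$ using the definition of $\tilde h_{\alpha_i,1}$ (your ``one-line computation''), and then proves the lemma by induction on $r$, expanding $x_{\alpha_j,r+1}^{\pm}$ through the neighbour recursion $\pm a_{j,j+1}^{-1}[\tilde h_{\alpha_{j+1},1},x_{\alpha_j,r}^{\pm}]$ and applying the super-Jacobi identity together with the vanishing of $[h_{\alpha_i,0},\tilde h_{\alpha_{j+1},1}]$ and $[\tilde h_{\alpha_i,1},\tilde h_{\alpha_{j+1},1}]$, exactly as in your inductive step for (ii). The only divergences are that the paper does not run a parallel induction for $[h_{\alpha_i,r},h_{\alpha_j,s}]=0$ at higher orders within this lemma (that is deferred to Lemmas \ref{lem:2}--\ref{lem:7}) and passes silently over the independence of the recursion from the choice of auxiliary index, which you rightly flag as the point coupling the two inductions.
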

\begin{proof}
We show only the case when $\alpha_j$ is an odd root. We prove this lemma by induction on $r$. For $r=0$, the relations are exactly relations (\ref{rel:38}) and (\ref{rel:40}). Suppose that relations (\ref{rel:27}) and (\ref{rel:44}) hold for $r=k$. First, we show that relation (\ref{rel:27}) holds when $r=k+1$. Using (\ref{rel:44}), we obtain:
\begin{equation}
\label{rel:48}
    [h_{\alpha_i,0},x_{\alpha_j,k+1}^{\pm}]=\pm \frac{1}{a_{j,j+1}}[h_{\alpha_i,0}[\tilde{h}_{\alpha_{j+1},1},x_{\alpha_j,k}]].
\end{equation}

By $[h_{\alpha_i,0},h_{\alpha_j,1}]=0$, we find that right hand sight of (\ref{rel:48}) is
\begin{equation}
\begin{split}
    &\pm \frac{1}{a_{j,j+1}}[\tilde{h}_{\alpha_{j+1},1},[h_{\alpha_i,0},x_{\alpha_j,k}^{\pm}]]=\frac{a_{i,j}}{a_{j,j+1}}[\tilde{h}_{\alpha_{j+1},1},x_{\alpha_j,k}^{\pm}]=\frac{a_{i,j}}{a_{j,j+1}}(\pm a_{j,j+1}x_{\alpha_j,k+1}^{\pm})\\
    &=a_{i,j}x_{\alpha_j,k+1}^{\pm}.
\end{split}
\end{equation}
Thus, we have shown that $[h_{\alpha_i,0},x_{\alpha_j,k+1}]=\pm a_{ij}x_{\alpha_j,k+1}$.
Next, we show that (\ref{rel:27}). We check the relation
\begin{equation}
    [h_{\alpha_i,1},x_{\alpha_j,k+1}^{\pm}]=\pm a_{ij}x_{\alpha_j,k+2}.
\end{equation}
By (\ref{rel:46}) we obtain
\begin{equation}
\label{rel:51}
     [h_{\alpha_i,1},x_{\alpha_j,k+1}^{\pm}]=\pm \frac{1}{a_{j,j+1}}[\tilde{h}_{\alpha_i,1},[\tilde{h}_{\alpha_{j+1},1},x_{\alpha_j,k}]].
\end{equation}
By $[h_{\alpha_i,1},h_{\alpha_j,1}]=0$ we find that the right-hand side is equal to
\[
\frac{1}{a_{j,j+1}}[\tilde{h}_{\alpha_{j+1},1},[\tilde{h}_{\alpha_i,1},x_{\alpha_j,k}]].
\]
By the induction hypothesis we can write hand side (\ref{rel:51}) as
\begin{equation}
    \label{rel:52}
    \pm \frac{1}{a_{j,j+1}}[\tilde{h}_{\alpha_{j+1},1},[\tilde{h}_{\alpha_i,1},x_{\alpha_j,k}]]=\frac{a_{ij}}{a_{j,j+1}}[\tilde{h}_{\alpha_{j+1},1},x_{\alpha_j,k+1}^{\pm}].
\end{equation}
The generator $x_{\alpha_j,k+2}^{\pm}$ was defined in (\ref{rel:46}), we find that the right-hand side of (\ref{rel:52}) is $\pm a_{ij}x_{\alpha_j,k+2}^{\pm}$. This statement completes the proof of the lemma \ref{lem:1}.
\end{proof}

\begin{lemma}
\label{lem:2}
\begin{enumerate}
    \item The relation (\ref{rel:29}) holds in $Y_{\hbar}^D(\widehat{sl}(m|n,\Pi))$. For all $i,j\in I$ when $i=j$ and $r+s \leq 2$. \label{lem:2st:1}
    \item Suppose that $i\neq j$. Then the relations (\ref{rel:29}), (\ref{rel:31}) hold for any $r,s$ in $Y_{\hbar}^D(\widehat{sl}(m|n,\Pi))$. \label{lem:2st:2}
    \item The relation (\ref{rel:31}) holds in $Y_{\hbar}^D(\widehat{sl}(m|n,\Pi))$ when $i=j,(r,s)=(1,0)$. \label{lem:2st:3}
    \item The relation (\ref{rel:30}) holds in $Y_{\hbar}^D(\widehat{sl}(m|n,\Pi))$ when $i=j,(r,s)=(1,0)$. \label{lem:2st:4}
    \item For all $i,j$, the relation (\ref{rel:30}) holds in $Y_{\hbar}^D(\widehat{sl}(m|n,\Pi))$ when $(r,s)=1,0$. \label{lem:2st:5}
    \item Set $\tilde{h}_{\alpha_i,2}=h_{\alpha_i,2}-h_{\alpha_i,0}h_{\alpha_i,1}+\frac{1}{3}h_{\alpha_i,0}^3$. Then the following equation holds for all $i=j$ in $Y_{\hbar}^D(\widehat{sl}(m|n,\Pi))$;\\
    $[\tilde{h}_{\alpha_i,2},x_{\alpha_j,0}^{\pm}]=\pm a_{ij}x_{\alpha_j,2}^{\pm} \pm \frac{1}{12}  a_{ij}^3x_{\alpha_j,0}^{\pm}$ \label{lem:2st:6}
    \item For all $i,j$ the relation (\ref{rel:32}) holds in $Y_{\hbar}^D(\widehat{sl}(m|n,\Pi))$ when
    \begin{itemize}
        \item $r_1=r_2=0, s\in \mathbb{Z}_{\geq 0}$,
        \item $r_1=1,r_2=0, s\in \mathbb{Z}_{\geq 0}$,
        \item $r_1=2,r_2=0, s\in \mathbb{Z}_{\geq 0}$,
    \end{itemize} \label{lem:2st:7}
    \item In $Y_{\hbar}^D(\widehat{sl}(m|n,\Pi))$, we have $[h_{\alpha_j,1},x_{\alpha_i,1}^{\pm}]=\frac{a_{ij}}{a_{ii}}[h_{\alpha_i,1},x_{\alpha_i,1}^{\pm}]\pm \frac{a_{ij}}{2}(\{h_{\alpha_j,0},x_{\alpha_i,1}\}-\{h_{\alpha_i,0},x_{\alpha_i,1} \})$ for all $i,j$ for all even roots. \label{lem:2st:8}
    \item For all $i,j$ we have $[h_{\alpha_i,2},h_{\alpha_j,0}]$ in $Y_{\hbar}^D(\widehat{sl}(m|n,\Pi))$. \label{lem:2st:9}
    \item Let $a_{ii}=2$ and $a_{ij}=-1$. Then $[h_{\alpha_i,2},h_{\alpha_i,1}]$ holds in $Y_{\hbar}^D(\widehat{sl}(m|n,\Pi))$. \label{lem:2st:10}
\end{enumerate}
\end{lemma}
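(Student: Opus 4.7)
The plan is to establish the ten assertions in the stated order, using throughout the two bootstrap identities $x_{\alpha_i,r+1}^{\pm} = \pm a_{i,i+1}^{-1}[\tilde h_{\alpha_{i+1},1}, x_{\alpha_i,r}^{\pm}]$ and $h_{\alpha_i,r+1} = [x_{\alpha_i,r+1}^{+}, x_{\alpha_i,0}^{-}]$ displayed just before the lemma, together with Lemma \ref{lem:1}, which allows me to replace a bracket with $\tilde h_{\alpha_k,1}$ by a shift in the second degree index. The governing philosophy is that every relation of the Drinfeld presentation can be lifted, one degree at a time, from the base case already present in the minimalistic presentation.

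For statements (1)--(5), I would start from the degree-zero base cases, which are precisely the relations (\ref{rel:35})--(\ref{rel:43}), and propagate upwards. In (1) the cases $r+s=0$ and $r+s=1$ are (\ref{rel:36}) and (\ref{rel:37}); the remaining case $r+s=2$ is obtained by rewriting one factor via the bootstrap, then applying Jacobi and the previously-settled lower cases. Statements (2)--(5) follow the same pattern: express a higher-order generator via the bootstrap, apply Jacobi, and use Lemma \ref{lem:1} to produce the expected coefficient. The parity of the root affects which bootstrap is available (odd $\alpha_i$ forces use of the $i+1$ index because $a_{ii}=0$) and whether the commutator or the antisupercommutator appears in the $\hbar$-correction of (\ref{rel:39})--(\ref{rel:40}).

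For statements (6)--(10) the computations are quadratic in the currents and demand more care. Statement (6) is obtained by substituting the bootstrap expression for $\tilde h_{\alpha_i,2}$ and checking that the cubic correction $\tfrac{1}{3}h_{\alpha_i,0}^{3}$ in its definition is exactly what cancels the lower-order $\hbar$-terms produced by (\ref{rel:39})--(\ref{rel:40}); without this correction a cubic remainder would survive. For the Serre relations in (7), I would first settle $r_1=r_2=0$ by induction on $s$, then bootstrap successively to $r_1=1$ and $r_1=2$, at each step using Jacobi to rewrite the bracket as a linear combination of base-case Serre relations already at hand. Statements (8)--(10) are quadratic identities on the Cartan part which I would prove by computing the relevant brackets in two different ways, exploiting (5) together with the explicit formula for $\tilde h_{\alpha_i,2}$ from (6).

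The main obstacle is statement (7). The symmetrization $\sum_{\sigma \in S_n}$ prevents term-by-term reduction, so the Lemma \ref{lem:1} trick only yields the desired identity after re-symmetrization followed by a Jacobi rearrangement; isolating the correct combination is combinatorially delicate when $|a_{ij}|\geq 2$. A secondary obstacle is sign bookkeeping for odd roots in the antisupercommutator, together with the systematic need to avoid division by $a_{ii}=0$; this is why the hypothesis $m\neq n$ and $m,n\geq 2$ of Theorem \ref{thm_3.2} is essential, since it guarantees that every odd simple root has an even neighbour with nonzero Cartan entry, so that the $a_{i,i+1}^{-1}$ version of the bootstrap is always legitimate.
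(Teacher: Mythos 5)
There is a genuine gap, and it sits exactly at the one place where the paper actually writes out an argument rather than citing one. For statement (3) with $i=j$ and $\alpha_i$ an \emph{odd} root we have $a_{ii}=0$, so relation (\ref{rel:31}) requires $[x^{\pm}_{\alpha_i,2},x^{\pm}_{\alpha_i,0}]-[x^{\pm}_{\alpha_i,1},x^{\pm}_{\alpha_i,1}]=0$. Your prescription for (2)--(5) is a single degree-one bootstrap: apply $\mathrm{ad}(\tilde h_{\alpha_{i+1},1})$ and use Lemma \ref{lem:1}. But applying $\mathrm{ad}(\tilde h_{\alpha_{i+1},1})$ to $[x^{\pm}_{\alpha_i,1},x^{\pm}_{\alpha_i,0}]=0$ produces
\begin{equation*}
a_{i,i+1}\bigl([x^{\pm}_{\alpha_i,2},x^{\pm}_{\alpha_i,0}]+[x^{\pm}_{\alpha_i,1},x^{\pm}_{\alpha_i,1}]\bigr)=0,
\end{equation*}
i.e. $[x^{\pm}_{\alpha_i,2},x^{\pm}_{\alpha_i,0}]=-[x^{\pm}_{\alpha_i,1},x^{\pm}_{\alpha_i,1}]$ --- the \emph{opposite} sign to what is needed, and one equation cannot force both brackets to vanish. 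The paper closes this by constructing (as in Levendorskii's Lemma 1.4) an auxiliary second-order Cartan element $\hat h_{\alpha_{i+1},2}$ satisfying $[\hat h_{\alpha_{i+1},2},x^{\pm}_{\alpha_i,0}]=\pm a_{i,i+1}x^{\pm}_{\alpha_i,2}$, applying it to $[x^{\pm}_{\alpha_i,0},x^{\pm}_{\alpha_i,0}]=0$ to obtain the linearly independent equation $2[x^{\pm}_{\alpha_i,0},x^{\pm}_{\alpha_i,2}]=0$, and only then concluding $[x^{\pm}_{\alpha_i,2},x^{\pm}_{\alpha_i,0}]=[x^{\pm}_{\alpha_i,1},x^{\pm}_{\alpha_i,1}]=0$. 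Nothing in your plan introduces a degree-two Cartan operator before statement (6), so the odd case of (3) would fail as described.

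Two further remarks. First, the paper does not reprove (1), (2), (4), (5) or (6)--(10) at all: it cites Lemmas 2.22--2.36 of \cite{Guaywork} and Theorem 1.2 / Lemma 1.4 of \cite{Levendorski}, only supplying the odd-root modification above. Your intention to prove everything from scratch is legitimate but much heavier, and your own sketch of (7) concedes that the symmetrized Serre relation is "combinatorially delicate" without resolving it --- so that part is a plan, not a proof. Second, your justification for invertibility of the bootstrap is slightly off: what the argument actually uses is that $a_{i,i+1}\neq 0$ for adjacent nodes of any $A^{(1)}(m-1|n-1)$ diagram (the neighbour $\alpha_{i+1}$ may itself be odd); the hypothesis $m,n\geq 2$, $m\neq n$ is invoked to guarantee the existence of at least one even root somewhere in the diagram, which is what Lemmas \ref{lem:3} and \ref{lem:4} need.
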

We prove statements (\ref{lem:2st:1}--\ref{lem:2st:5}), since the proofs of statements (\ref{lem:2st:6}--\ref{lem:2st:10}) are contained in the set of lemmas 2.33--2.36 in \cite{Guaywork}. The proofs of statements \ref{lem:2st:1} and \ref{lem:2st:2} are the same as those of Lemmas 2.22 and 2.26 in \cite{Guaywork} for the case of odd roots. The proofs of statements \ref{lem:2st:3}, \ref{lem:2st:4}, and \ref{lem:2st:5} of Lemma \ref{lem:2} are similar to those of Lemmas 2.23, 2.24, and 2.28 in \cite{Guaywork}. We omit them.
Let $i=j$ and $\alpha_i$ be an odd root, then we apply $ad(\tilde{h}_{\alpha_{i+1},1})$ to (\ref{rel:42}), then we have $\pm a_{i,i+1}[x_{\alpha_i,1}^{\pm},x_{\alpha_i,0}^{\pm}] \pm a_{i,i+1}[x_{\alpha_i,0}^{\pm},x_{\alpha_i,1}^{\pm}]$. Since $[x_{\alpha_i,0}^{\pm},x_{\alpha_i,1}^{\pm}]=[x_{\alpha_i,1}^{\pm},x_{\alpha_i,0}^{\pm}]$ we obtain $[x_{\alpha_i,0}^{\pm},x_{\alpha_i,1}]=[x_{\alpha_i,1}^{\pm},x_{\alpha_i,0}^{\pm}]=0$. Next, we show that $[x_{\alpha_i,2}^{\pm},x_{\alpha_i,0}^{\pm}]=[x_{\alpha_i,1}^{\pm},x_{\alpha_i,1}^{\pm}]=[x_{\alpha_i,0}^{\pm},x_{\alpha_i,2}^{\pm}]$ holds. We apply $ad(\tilde{h}_{\alpha_{i+1},1})$ to $[x_{\alpha_i,0}^{\pm},x_{\alpha_i,1}^{\pm}]=[x_{\alpha_i,1}^{\pm},x_{\alpha_i,0}^{\pm}]$, we obtain:
\begin{equation}
\label{rel:53}
    \pm a_{i,i+1}([x_{\alpha_i,2}^{\pm},x_{\alpha_i,0}^{\pm}]+[x_{\alpha_i,1}^{\pm},x_{\alpha_i,1}^{\pm}])=\pm a_{i,i+1}([x_{\alpha_i,0}^{\pm},x_{\alpha_i,2}^{\pm}]+[x_{\alpha_i,1}^{\pm},x_{\alpha_i,1}^{\pm}])=0.
\end{equation}
Let $\alpha_j$ be an odd root and $\alpha_i = \alpha_{j+1}$. The fifth statement of Lemma \ref{lem:2} can be proved similarly to Lemma 2.28 in \cite{Guaywork}. Then, analogous to Lemma 1.4 in \cite{Levendorski}, there exists $\hat{h}_{\alpha_{i+1}, 2}$ such that:
\begin{equation}
    [\hat{h}_{\alpha_{i+1},2}, x_{\alpha_i,0}^{\pm}] = \pm a_{i,i+1}x_{\alpha_i, 2}^{\pm}.
\end{equation}
Applying $ad(\hat{h}_{\alpha_{i+1},2})$ to (\ref{rel:42}), we obtain
\begin{equation}
\label{rel:55}
    \pm a_{i,i+1}([x_{\alpha_i,0}^{\pm},x_{\alpha_i,2}^{\pm}]+[x_{\alpha_i,0}^{\pm},x_{\alpha_i,2}^{\pm}]) = 0.
\end{equation}

Equations (\ref{rel:53}), (\ref{rel:55}) are linearly independent. Thus, we obtain: $[x_{\alpha_i,2}^{\pm},x_{\alpha_i,0}^{\pm}]=[x_{\alpha_i,1}^{\pm},x_{\alpha_i,1}^{\pm}]=[x_{\alpha_i,0}^{\pm},x_{\alpha_i,2}^{\pm}]$. Thus, we proved statement (3).

\begin{lemma}
\label{lem:3}
Suppose, $\alpha_i$ is an even root, then we have
$[h_{\alpha_i,2},h_{\alpha_i,1}]=0$ in $Y_{\hbar}^D(\widehat{sl}(m|n,\Pi))$.\\
\end{lemma}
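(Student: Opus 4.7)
The plan is to reduce $[h_{\alpha_i,2}, h_{\alpha_i,1}]$ to commutators of root generators and exploit the cancellations that arise from relation (\ref{rel:29}) together with the $\hbar$-corrections in (\ref{rel:30}). Since $\alpha_i$ is even, no supersigns intervene and the argument follows the same template used by Guay (cf.\ Lemma 2.38 of \cite{Guaywork}) for the non-super case. The setup is to express $h_{\alpha_i,2}$ through root generators via (\ref{rel:29}) as
\[
h_{\alpha_i,2} = [x^+_{\alpha_i,1}, x^-_{\alpha_i,1}],
\]
and apply the Jacobi identity to obtain
\[
[h_{\alpha_i,2}, h_{\alpha_i,1}] = [x^+_{\alpha_i,1},[x^-_{\alpha_i,1}, h_{\alpha_i,1}]] + [[x^+_{\alpha_i,1}, h_{\alpha_i,1}], x^-_{\alpha_i,1}].
\]

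The next step is to evaluate $[h_{\alpha_i,1}, x^{\pm}_{\alpha_i,1}]$ using relation (\ref{rel:30}) with $r=0$, $s=1$, and $i=j$, combined with (\ref{rel:28}). This yields
\[
[h_{\alpha_i,1}, x^{\pm}_{\alpha_i,1}] = \pm a_{ii}\, x^{\pm}_{\alpha_i,2} \pm \tfrac{\hbar a_{ii}}{2}\{h_{\alpha_i,0}, x^{\pm}_{\alpha_i,1}\}.
\]
Substituting these expressions, the leading part produces
\[
a_{ii}\bigl([x^+_{\alpha_i,1}, x^-_{\alpha_i,2}] - [x^+_{\alpha_i,2}, x^-_{\alpha_i,1}]\bigr),
\]
which vanishes because both brackets equal $h_{\alpha_i,3}$ by (\ref{rel:29}). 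One is left with the $\hbar$-contribution
\[
\tfrac{\hbar a_{ii}}{2}\Bigl([x^+_{\alpha_i,1},\{h_{\alpha_i,0}, x^-_{\alpha_i,1}\}] - [\{h_{\alpha_i,0}, x^+_{\alpha_i,1}\}, x^-_{\alpha_i,1}]\Bigr).
\]

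The final step, and the one requiring care, is expanding the anticommutators $\{h_{\alpha_i,0}, x^{\pm}_{\alpha_i,1}\}$ as sums of products and distributing using the Leibniz rule. Each of the two resulting expressions simplifies, via (\ref{rel:28}) and (\ref{rel:29}), to the same combination
\[
-a_{ii}\{x^+_{\alpha_i,1}, x^-_{\alpha_i,1}\} + \{h_{\alpha_i,0}, h_{\alpha_i,2}\},
\]
so their difference vanishes identically. No further relation is needed, and the lemma follows.

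The expected obstacle is the bookkeeping of the Leibniz expansion: one must carry four product terms through the cancellation and verify that the two anticommutators $\{h_{\alpha_i,0}, h_{\alpha_i,2}\}$ produced on each side match exactly (in particular, one should \emph{not} need to already know that $h_{\alpha_i,0}$ commutes with $h_{\alpha_i,2}$, only that the same symmetric combination appears on both sides of the subtraction). Since $\alpha_i$ is even, the signs in the super-Jacobi identity reduce to the classical ones, so the computation parallels the ordinary Yangian case verbatim.
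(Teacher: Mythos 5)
There is a genuine circularity at the decisive step. Your reduction of $[h_{\alpha_i,2},h_{\alpha_i,1}]$ (via $h_{\alpha_i,2}=[x^+_{\alpha_i,1},x^-_{\alpha_i,1}]$, the Jacobi identity, and the formula $[h_{\alpha_i,1},x^{\pm}_{\alpha_i,1}]=\pm a_{ii}x^{\pm}_{\alpha_i,2}\pm\tfrac{\hbar a_{ii}}{2}\{h_{\alpha_i,0},x^{\pm}_{\alpha_i,1}\}$, which is indeed available from Lemma \ref{lem:1}) is correct, and your check that the $\hbar$-corrections cancel is sound. The problem is the leading term: you discard $a_{ii}\bigl([x^+_{\alpha_i,1},x^-_{\alpha_i,2}]-[x^+_{\alpha_i,2},x^-_{\alpha_i,1}]\bigr)$ on the grounds that both brackets equal $h_{\alpha_i,3}$ by (\ref{rel:29}). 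In the context of this lemma that relation is not yet available: the whole point of this block of lemmas inside the proof of Theorem \ref{thm_3.2} is to \emph{derive} the Drinfeld relations from the minimalistic presentation, and Lemma \ref{lem:2}(1) establishes (\ref{rel:29}) only for $i=j$ and $r+s\le 2$. (If one read the lemma literally as a statement in the Drinfeld presentation with (\ref{rel:27}) imposed, it would be vacuous, so that reading cannot be intended.) Worse, applying $\mathrm{ad}(\tilde h_{\alpha_i,1})$ to $[x^+_{\alpha_i,1},x^-_{\alpha_i,1}]=h_{\alpha_i,2}$ and using Lemma \ref{lem:1} together with $[h_{\alpha_i,0},h_{\alpha_i,2}]=0$ gives
\begin{equation*}
a_{ii}\bigl([x^+_{\alpha_i,2},x^-_{\alpha_i,1}]-[x^+_{\alpha_i,1},x^-_{\alpha_i,2}]\bigr)=[h_{\alpha_i,1},h_{\alpha_i,2}],
\end{equation*}
so the cancellation you assert is \emph{equivalent} to the statement being proved; your computation ultimately establishes $[h_{\alpha_i,2},h_{\alpha_i,1}]=[h_{\alpha_i,2},h_{\alpha_i,1}]$.

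The paper's own proof is entirely different and involves no computation: an even simple root of $\widehat{sl}(m|n,\Pi)$ has $a_{ii}=\pm 2$; the case $a_{ii}=2$ is statement (10) of Lemma \ref{lem:2}, imported from Proposition 2.36 of \cite{Guaywork}; and the case $a_{ii}=-2$ is reduced to it by the substitution $h_{\alpha_i,r}\mapsto -h_{\alpha_i,r}$, $x^+_{\alpha_i,r}\mapsto -x^+_{\alpha_i,r}$, which preserves the defining relations while flipping the sign of the Cartan entries. To repair your argument you would need an independent proof that $[x^+_{\alpha_i,r+1},x^-_{\alpha_i,s}]=[x^+_{\alpha_i,r},x^-_{\alpha_i,s+1}]$ for $r+s=2$, and that is precisely the hard content supplied by the cited proposition, not by (\ref{rel:29}).
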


We change $h_{\alpha_i,r}, x_{\alpha_i,r}^+$ in Proposition 2.36 in \cite{Guaywork} to $-h_{\alpha_i,r}$ and $-x_{\alpha_i,r}^+$, respectively. Then, we obtain the following equation: $[-h_{\alpha_i,2},-h_{\alpha_i,1}]=0$. That completes the proof.\\

We know that $[h_{\alpha_i,2},h_{\alpha_i,1}]=0$ holds for every even root in $I$. By Lemma (\ref{lem:2}) statement 10 and Lemma (\ref{lem:3}) we prove the following lemma in the same way as Proposition 2.39 in \cite{Guaywork}. The condition that needs to be satisfied is the existence of at least one even root. Since our algebra $\widehat{sl}(m|n,\Pi)$ is the affinization of $sl(m|n,\bar{\Pi})$ with $m,n \geq 2$ and $m \neq n$, this condition is always satisfied.

\begin{lemma}
\label{lem:4}
Suppose that $\alpha_i$ is odd root and $(\alpha_i,\alpha_j)\neq 0$, then we have $[h_{\alpha_j,2},h_{\alpha_j,1}]=0$ in $Y_{\hbar}^D(\hat{sl}(m|n,\Pi))$.\\
\end{lemma}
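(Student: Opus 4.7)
The plan is to follow the template of Proposition 2.39 of \cite{Guaywork} (which was invoked in Lemma \ref{lem:3} for even $\alpha_i$), but replacing the even neighbor by the odd $\alpha_i$ hypothesized in the lemma. Since $\alpha_j$ admits an odd simple root $\alpha_i$ with $a_{ij} = (\alpha_i,\alpha_j)\cdot 2/(\alpha_j,\alpha_j)\cdot(\dots) \neq 0$, the formula (\ref{rel:46}) together with Lemma \ref{lem:1} lets me realise each $x_{\alpha_j,r+1}^{\pm}$ as $\pm a_{ij}^{-1}[\tilde h_{\alpha_i,1},x_{\alpha_j,r}^{\pm}]$. Combining this with $h_{\alpha_j,r+s}=[x_{\alpha_j,r}^+,x_{\alpha_j,s}^-]$, I would rewrite $h_{\alpha_j,1}$ and $h_{\alpha_j,2}$ purely in terms of iterated adjoint actions of $\tilde h_{\alpha_i,1}$ on the zero-order root generators $x_{\alpha_j,0}^{\pm}$.

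Having done so, I would expand $[h_{\alpha_j,2},h_{\alpha_j,1}]$ by repeated use of the Jacobi (super-)identity, pushing every occurrence of $\tilde h_{\alpha_i,1}$ outward. After each move the remaining commutator either (i) is of the form $[\tilde h_{\alpha_i,1},\tilde h_{\alpha_i,1}]=0$ or $[h_{\alpha_i,r},h_{\alpha_j,s}]=0$ by (\ref{rel:27}) for low indices, (ii) reproduces the already proved identity $[h_{\alpha_i,2},h_{\alpha_i,1}]=0$ for the even auxiliary root guaranteed by the assumptions $m,n\geq 2$, $m\neq n$ (via Lemma \ref{lem:3}), or (iii) collapses to a Serre-type bracket of zero-order generators which vanishes by (\ref{rel:41})--(\ref{rel:43}). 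Parts (\ref{lem:2st:8}) and (\ref{lem:2st:9}) of Lemma \ref{lem:2} supply the intermediate identities $[h_{\alpha_i,2},h_{\alpha_j,0}]=0$ and the $[h_{\alpha_j,1},x_{\alpha_i,1}^{\pm}]$-reduction needed to close the computation.

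If no even root is adjacent to $\alpha_j$ (which can happen for an odd $\alpha_j$ sandwiched between two odd roots in a non-distinguished diagram), I would chain two odd reflections: choose an even root $\alpha_k$ in the diagram at distance two from $\alpha_j$, compute $[\tilde h_{\alpha_k,1},\cdot]$ acting through the intermediate odd node, and reduce again to the already-established Lemma \ref{lem:3}. The assumption $m,n\geq 2$ with $m\neq n$ ensures that the affine Dynkin diagram of $A^{(1)}(m-1|n-1)$ always contains at least one even node, so such a chain terminates.

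The main obstacle will be bookkeeping of the super signs: every time an odd generator $x_{\alpha_j,r}^{\pm}$ is moved past another odd element, a sign flip is introduced through $\{a,b\}=ab+(-1)^{\bar a\bar b}ba$, and the quadratic terms $\tfrac{\hbar a_{ij}}{2}\{x_{\alpha_i,r}^{\pm},x_{\alpha_j,s}^{\pm}\}$ coming from (\ref{rel:31}) must be carefully collected. A subsidiary difficulty is that generators $h_{\alpha_j,2}$ are not part of the minimalistic presentation; they only make sense once Lemma \ref{lem:2}(\ref{lem:2st:6}) is available, so I would first fix a definite expression for $\tilde h_{\alpha_j,2}$ in terms of commutators of the minimalistic generators and check that it is well defined modulo the relations, independently of the choice of neighbor $\alpha_i$, before attempting the vanishing computation.
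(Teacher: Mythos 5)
Your proposal follows essentially the same route as the paper: the paper's entire proof of Lemma \ref{lem:4} consists of invoking Lemma \ref{lem:2} (statements \ref{lem:2st:8}--\ref{lem:2st:10}) and Lemma \ref{lem:3} and running the propagation argument of Proposition 2.39 of \cite{Guaywork} along edges of the connected Dynkin diagram, anchored at the even node guaranteed by $m,n\geq 2$, $m\neq n$ --- which is exactly your plan of rewriting $x^{\pm}_{\alpha_j,r+1}$ via $\mathrm{ad}(\tilde h_{\alpha_i,1})$ as in (\ref{rel:46}) and reducing $[h_{\alpha_j,2},h_{\alpha_j,1}]$ to the already-established even case. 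One small correction to your fallback branch: when $\alpha_j$ has no even neighbour, a single bracket $[\tilde h_{\alpha_k,1},\cdot]$ with a distance-two even root $\alpha_k$ cannot act ``through'' the intermediate node, since $a_{kj}=0$ forces $[\tilde h_{\alpha_k,1},x^{\pm}_{\alpha_j,r}]=0$; the correct mechanism (and the one implicit in the paper's appeal to the existence of at least one even root) is to iterate the one-edge propagation node by node along the diagram until an even root is reached, and this has nothing to do with odd reflections, which belong to a different part of the paper.
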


Using the relation $[h_{\alpha_i,2},h_{\alpha_i,1}]=0$, we prove the following lemma similar to Theorem 1.2 in \cite{Levendorski}. Since our algebra is the affinization of the type $A(m|n)$ with $m \neq n$ and $m,n\geq 2$ we have at least one even root, hence this condition completes the proof since the only condition is the existence of the root with $a_{ii}\neq 0$.

\begin{lemma}
\label{lem:5}
\begin{enumerate}
\item The relation (\ref{rel:27}) holds in $Y_{\hbar}^D(\hat{sl}(m|n,\Pi))$ for every even root $\alpha_i$. \label{lem:5:st:1}
\item The relation (\ref{rel:28}) holds in $Y_{\hbar}^D(\hat{sl}(m|n,\Pi))$ for every even root $\alpha_i$. \label{lem:5:st:2}
\item The relation (\ref{rel:31}) holds in $Y_{\hbar}^D(\hat{sl}(m|n,\Pi))$ for every even root $\alpha_i$. \label{lem:5:st:3}
\item The relation (\ref{rel:30}) holds in $Y_{\hbar}^D(\hat{sl}(m|n,\Pi))$ for every even root $\alpha_i$. \label{lem:5:st:4}
\end{enumerate}
\end{lemma}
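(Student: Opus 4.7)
The plan is to follow Theorem 1.2 of Levendorskii \cite{Lev} almost verbatim, since the lemma restricts to even roots $\alpha_i$ where $a_{ii} = \pm 2 \neq 0$. The crucial input, already assembled by the preceding lemmas, is the commutation $[h_{\alpha_i,2}, h_{\alpha_i,1}] = 0$ proved in Lemmas \ref{lem:3} and \ref{lem:4}, combined with the base cases collected in Lemma \ref{lem:2}. The overall scheme is to bootstrap the four relations in the order (\ref{lem:5:st:2}) $\Rightarrow$ (\ref{lem:5:st:1}) $\Rightarrow$ (\ref{lem:5:st:3}) $\Rightarrow$ (\ref{lem:5:st:4}) by induction on the total degree $r+s$, using the recursive definition (\ref{rel:46}) to replace each generator of degree $r+1$ by a commutator with $\tilde h_{\alpha_{i+1},1}$.

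First I would establish statement (\ref{lem:5:st:2}), relation (\ref{rel:28}), by induction on $s$. The cases $s=0,1$ are (\ref{rel:38}) and Lemma \ref{lem:1} respectively; for the inductive step I would write $x_{\alpha_j,s+1}^\pm = \pm a_{j,j+1}^{-1}[\tilde h_{\alpha_{j+1},1}, x_{\alpha_j,s}^\pm]$, apply the Jacobi identity to $[h_{\alpha_i,0}, x_{\alpha_j,s+1}^\pm]$, use $[h_{\alpha_i,0}, \tilde h_{\alpha_{j+1},1}] = 0$ (a consequence of (\ref{rel:35})), and invoke the inductive hypothesis. For statement (\ref{lem:5:st:1}), relation (\ref{rel:27}), I would exploit the expression $h_{\alpha_j,s+t} = [x_{\alpha_j,s}^+, x_{\alpha_j,t}^-]$ coming from (\ref{rel:29}) to reduce $[h_{\alpha_i,r}, h_{\alpha_j,s}]$ to a combination of $[h_{\alpha_i,r}, x_{\alpha_j,\cdot}^\pm]$ brackets; these are controlled by statement (\ref{lem:5:st:4}) at lower degree, so the induction closes provided the diagonal base case $[h_{\alpha_i,2}, h_{\alpha_i,1}] = 0$ is in hand — which is precisely the input from Lemmas \ref{lem:3}, \ref{lem:4}.

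Statements (\ref{lem:5:st:3}) and (\ref{lem:5:st:4}), relations (\ref{rel:31}) and (\ref{rel:30}), are then handled by a simultaneous double induction on $(r,s)$ with base $(1,0)$ supplied by Lemma \ref{lem:2} items (\ref{lem:2st:3})--(\ref{lem:2st:5}). The inductive step again uses (\ref{rel:46}) to replace $x_{\alpha_i,r+1}^\pm$ (or $h_{\alpha_i,r+1}$) by a commutator with $\tilde h_{\alpha_{i+1},1}$, rearranges via the super Jacobi identity, and substitutes the lower-order instances of (\ref{rel:39})--(\ref{rel:40}); the antisupercommutator term $\tfrac{\hbar a_{ij}}{2}\{\cdot,\cdot\}$ on the right-hand side emerges precisely from collecting the $\hbar$-corrections produced by these substitutions. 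Since $\alpha_i$ is even, every generator $h_{\alpha_i,r}$ and $x_{\alpha_i,r}^\pm$ is itself even, so the signs $(-1)^{\bar a\bar b}$ in $\{a,b\}$ reduce to the ordinary anticommutator whenever the second factor is also attached to an even root, and reduce to the formulas already encoded in Lemma \ref{lem:2} when it is attached to an odd root.

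The main obstacle I expect is not any individual identity but the bookkeeping of the propagation: one must verify that each inductive step produces exactly the antisupercommutator term predicted by (\ref{rel:30})--(\ref{rel:31}) rather than a remainder, and this requires keeping track of $\hbar/2$ contributions from every application of (\ref{rel:39}) or (\ref{rel:40}) deep inside nested Jacobi expansions. The affine setting introduces no new difficulty here because the proof proceeds root-by-root for even $\alpha_i$ and the hypothesis $m \neq n$, $m,n \geq 2$ guarantees that such an even root with $a_{ii} \neq 0$ always exists, so Levendorskii's argument applies without modification.
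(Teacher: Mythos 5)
Your proposal matches the paper's own treatment: the paper proves Lemma \ref{lem:5} by deferring to Theorem 1.2 of Levendorskii's work on affine Yangians, with exactly the inputs you identify — the relation $[h_{\alpha_i,2},h_{\alpha_i,1}]=0$ from Lemmas \ref{lem:3} and \ref{lem:4}, the base cases of Lemma \ref{lem:2}, and the observation that $m\neq n$, $m,n\geq 2$ guarantees an even root with $a_{ii}\neq 0$. Your inductive bootstrapping scheme is a reasonable (and more detailed) unpacking of that same argument, so there is nothing to object to beyond the minor point that for an even root the recursion uses $\tilde h_{\alpha_i,1}$ itself (the even-root version of the formula) rather than (\ref{rel:46}).
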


Next, we prove same statement as that of Lemma \ref{lem:5} in case then $\alpha_j$  is  odd root.
\begin{lemma}
\label{lem:6}
\begin{enumerate}
\item The relation (\ref{rel:31}) holds in $Y_{\hbar}^D(\widehat{sl}(m|n,\Pi))$ when $\alpha_j=\alpha_i$ is odd root, hence the relation (\ref{rel:33}) holds in $Y_{\hbar}^D(\widehat{sl}(m|n,\Pi))$. \label{lem:6:st:1}
\item The relation (\ref{rel:28}) holds in $Y_{\hbar}^D(\widehat{sl}(m|n,\Pi))$ when $\alpha_j=\alpha_i$ is an odd root. \label{lem:6:st:2}
\item We have $[h_{\alpha_i,r},x_{\alpha_i,0}]=0$ when $\alpha_i$ is an odd root in $Y_{\hbar}^D(\widehat{sl}(m|n,\Pi))$. \label{lem:6:st:3}
\item The relation (\ref{rel:30}) holds in $Y_{\hbar}^D(\widehat{sl}(m|n,\Pi))$ when $\alpha_j=\alpha_i$ is an odd root. \label{lem:6:st:4}
\item The relation (\ref{rel:27}) holds in $Y_{\hbar}^D(\widehat{sl}(m|n,\Pi))$ when $\alpha_j=\alpha_i$ is an odd root. \label{lem:6:st:5}
\end{enumerate}
\end{lemma}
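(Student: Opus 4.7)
The plan is to bootstrap the five diagonal odd-root claims from the level-zero defining relations (\ref{rel:38}), (\ref{rel:40}), (\ref{rel:42}) together with the recursion (\ref{rel:46}), namely $x_{\alpha_i,r+1}^{\pm}=\pm a_{i,i+1}^{-1}[\tilde{h}_{\alpha_{i+1},1},x_{\alpha_i,r}^{\pm}]$ and $h_{\alpha_i,r+1}=[x_{\alpha_i,r+1}^{+},x_{\alpha_i,0}^{-}]$. Since the affine Dynkin diagram of $A^{(1)}(m-1|n-1)$ is a cycle, an odd simple root $\alpha_i$ always has a neighbor $\alpha_{i+1}$ with $a_{i,i+1}\neq 0$, and the relations for $\alpha_{i+1}$ that we will invoke are already supplied by Lemma \ref{lem:5} (using the hypothesis that at least one adjacent root is even, which holds because $m,n\geq 2$ and $m\neq n$).

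I would prove the parts in the order (2), (1), (3), (4), (5). Part (2) goes by induction on $s$: the base $s=0$ is (\ref{rel:38}) together with $a_{ii}=0$, and the inductive step expands $x_{\alpha_i,s+1}^{\pm}$ via (\ref{rel:46}) and uses the super Jacobi identity together with $[h_{\alpha_i,0},\tilde{h}_{\alpha_{i+1},1}]=0$ from Lemma \ref{lem:5}. For part (1), the central claim $[x_{\alpha_i,r}^{\pm},x_{\alpha_i,s}^{\pm}]=0$ will be proved by induction on $r+s$: the base cases $r+s\leq 2$ are Lemma \ref{lem:2} part (3), and for the inductive step applying $\mathrm{ad}(\tilde{h}_{\alpha_{i+1},1})$ to a level-$n$ identity yields, via Leibniz, $[x_{\alpha_i,r+1}^{\pm},x_{\alpha_i,s}^{\pm}]+[x_{\alpha_i,r}^{\pm},x_{\alpha_i,s+1}^{\pm}]=0$ at level $n+1$, which when combined with super-symmetry of the bracket on odd generators gives pairwise equations among the level-$(n+1)$ unknowns; the additional scalar relation needed to close the system is supplied by applying either $\mathrm{ad}(\hat{h}_{\alpha_{i+1},2})$ (as in the proof of Lemma \ref{lem:2} part (3)) or iterated $\mathrm{ad}(\tilde{h}_{\alpha_{i+1},1})$ to a lower-level vanishing identity. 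Part (3) is then induction on $r$: expand $[h_{\alpha_i,r},x_{\alpha_i,0}^{\pm}]=[[x_{\alpha_i,r}^{+},x_{\alpha_i,0}^{-}],x_{\alpha_i,0}^{\pm}]$ by Jacobi and apply parts (1), (2) together with (\ref{rel:36}). Parts (4) and (5) are then essentially formal: for (4) substitute the recursions for $h_{\alpha_i,r+1}$ and $x_{\alpha_i,s+1}^{\pm}$, apply Jacobi, and use parts (1)--(3); for (5) write $h_{\alpha_i,s}=[x_{\alpha_i,s}^{+},x_{\alpha_i,0}^{-}]$ inside $[h_{\alpha_i,r},h_{\alpha_i,s}]$, expand, and invoke parts (2)--(4).

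The main obstacle will be the inductive step in part (1): at each level $n+1$, the pairwise recurrence derived from $\mathrm{ad}(\tilde{h}_{\alpha_{i+1},1})$ alone leaves a residual one-dimensional kernel, so one must produce a genuinely independent additional linear relation to force vanishing. Lemma \ref{lem:2} handles the case $n+1=2$ by constructing $\hat{h}_{\alpha_{i+1},2}$; extending this construction of auxiliary shifted-Cartan operators to all levels, while verifying that the argument does not circularly invoke the very identities under proof, is the most delicate aspect of the plan.
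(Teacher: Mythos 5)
Your plan follows the paper's proof essentially step for step: the same induction on the total degree, the same use of $\mathrm{ad}(\tilde h_{\alpha_{i+1},1})$ together with a degree-two shifted Cartan element to produce two linearly independent relations closing the inductive step of part (1), and the same reduction of parts (3)--(5) to parts (1)--(2) via $h_{\alpha_i,r}=[x^+_{\alpha_i,r},x^-_{\alpha_i,0}]$ and the Jacobi identity. The one difficulty you flag --- constructing auxiliary shifted Cartan operators at every level --- does not actually arise: since the induction hypothesis covers two consecutive degrees, applying $\mathrm{ad}(\tilde h_{\alpha_{i+1},2})$ (available from Lemma \ref{lem:2}, statements 6--7, and Lemma \ref{lem:4}) to the lower level and $\mathrm{ad}(\tilde h_{\alpha_{i+1},1})$ to the upper level already supplies the two independent relations at every stage, so only the degree-one and degree-two operators are ever needed.
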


We show that $[x_{\alpha_i,r}^{\pm},x_{\alpha_i,s}^{\pm}] = 0$ holds. We prove that (\ref{rel:31}) holds by induction on $k = r + s$. When $k = 0$, this is relation (\ref{rel:42}). By applying $ad(\tilde{h}_{\alpha_{i+1}})$ to $[x_{\alpha_i,0}^+,x_{\alpha_i,0}^+]$, we obtain $a_{i+1,i}([x_{\alpha_i,1}^+,x_{\alpha_i,0}^+]+[x_{\alpha_i,0}^+,x_{\alpha_i,1}^+])=0$. As we already proved that $[x_{\alpha_i,1}^+,x_{\alpha_i,0}^+]=[x_{\alpha_i,0}^+,x_{\alpha_i,1}^+]$, we obtain $[x_{\alpha_i,1}^+,x_{\alpha_i,0}^+]=[x_{\alpha_i,0}^+,x_{\alpha_i,1}^+]=0$.
Suppose that $[x_{\alpha_i,r}^{\pm},x_{\alpha_i,s}^{\pm}]=0$ holds for all $r,s$ such that $r+s=k,k+1$. We apply $ad(\tilde{h}_{i+1,i})$ to $[x_{\alpha_i,l}^+,x_{\alpha_i,k+1-l}]=0$, we have
\begin{equation}
\label{rel:56}
    [\tilde{h}_{i+1,i},[x_{\alpha_i,l}^+,x_{\alpha_i,k+1-l}^{+}]]=0.
\end{equation}
Using statement \ref{lem:2st:4} of Lemma \ref{lem:2} and the induction hypothesis, we have:
\begin{equation}
\label{rel:57}
[\tilde{h}_{i+1,1},[x_{\alpha_i,l}^+,x_{\alpha_i,k+1-l}]]=a_{i,i+1}([x_{\alpha_i,l+1}^+,x_{\alpha_i,k+1-l}]+[x_{\alpha_i,l}^+,x_{\alpha_i,k+2-l}])
\end{equation}
for all elements of the Cartan matrix, we have $a_{i,i+1}\neq 0$ since we have algebra $\widehat{sl}(m|n,\Pi)$,  thus we have the relation
\begin{equation}
    [x_{\alpha_i,l+1}^+,x_{\alpha_i,k+1-l}]=-[x_{\alpha_i,l}^+,x_{\alpha_i,k+2-l}^{+}]
\end{equation}
by relations (\ref{rel:56}), (\ref{rel:57}), hence we obtain the following relation:
\begin{equation}
\label{rel:59}
    [x_{\alpha_i,l+2}^+,x_{\alpha_i,k-l}^+]=[x_{\alpha_i,l}^+,x_{\alpha_i,k+2-l}^+].
\end{equation}

We apply $ad(\tilde{h}_{\alpha_{i+1},2})$ to $[x_{\alpha_i,l}^+,x_{\alpha_i,k-l}]=0$.  Using the induction hypothesis, along with statement \ref{lem:2st:7} of Lemma \ref{lem:2} and Lemma \ref{lem:4}, we obtain:
\begin{equation}
    [\tilde{h}_{\alpha_{i+1},2},[x_{\alpha_i,l}^+,x_{\alpha_i,k-l}]]=a_{i,i+1}([x_{\alpha_i,l+2}^+,x_{\alpha_i,k-l}^+]+[x_{\alpha_i,l}^+,x_{\alpha_i,k+2-l}^+]).
\end{equation}
Since $a_{i,i+1}\neq 0$, we obtain the following:
\begin{equation}
\label{rel:61}
    [x_{\alpha_i,l+2}^+,x_{\alpha_i,k-l}^+]=-[x_{\alpha_i,l}^+,x_{\alpha_i,k+2-l}^+].
\end{equation}
Equations (\ref{rel:59}) and (\ref{rel:61}) are linearly independent. Thus, we conclude that $[x_{\alpha_i,l}^{+},x_{\alpha_i,k+2-l}^{+}]=0$ holds.\\

\paragraph{Proof of statement \ref{lem:6:st:2}}
We prove the statement by induction on $r+s=k$. When $k=0$, this statement becomes (\ref{rel:42}). Suppose $[x_{\alpha_i,r}^+,x_{\alpha_i,s}^+] = h_{\alpha_i,r+s}$ for all $r,s$ such that $r+s\leq k$, then by definition, we have
\begin{equation}
\begin{split}
&[h_{\alpha_i,r+1},x_{\alpha_{i+1},s}^+]-[h_{\alpha_i,r},x_{\alpha_{i+1},s+1}^+]=[[x_{\alpha_{i},r+1}^+,x_{\alpha_i,0}^-],x_{\alpha_{i},s}^+]-\\
&-[[x_{\alpha_{i},r}^+,x_{\alpha_i,0}^-],x_{\alpha_{i+1},s+1}^+].
\end{split}
\end{equation}
By using statement \ref{lem:2st:4} of Lemma \ref{lem:2}, we have:
\begin{equation}
\begin{split}
&[h_{\alpha_i,r+1},x_{\alpha_{i+1},s}^+]-[h_{\alpha_i,r},x_{\alpha_{i+1},s+1}^+] = [\{[x_{\alpha_i,r+1}^+,x_{\alpha_{i+1},s}^+]-\\
&-[x_{\alpha_i,r}^+,x_{\alpha_{i+1},s+1}^+]\},x_{\alpha_i,0}^-].
\end{split}
\end{equation}
By Lemma \ref{lem:2} statement \ref{lem:2st:4} we have
\begin{equation}
    [h_{\alpha_i,r+1},x_{\alpha_{i+1},s}^+]-[h_{\alpha_i,r},x_{\alpha_{i+1},s+1}^+] = [\pm a_{i,i+1}\frac{\hbar}{2}\{x_{\alpha_i,r}^+,x_{\alpha_{i+1},s}^+\}, x_{\alpha_i,0}^-].
\end{equation}
By using Lemma \ref{lem:2} statement \ref{lem:2st:4} we have
\begin{equation}
    [h_{\alpha_i,r+1},x_{\alpha_{i+1},s}^+]-[h_{\alpha_i,r},x_{\alpha_{i+1},s+1}^+]= \pm a_{i,i+1}\frac{\hbar}{2}\{h_{\alpha_i,r},x_{\alpha_{i+1},s+1}\}.
\end{equation}
By the assumption $[x_{\alpha_i,r}^+,x_{\alpha_i,s}^-]=h_{\alpha_i,r+s}$ for $r+s\leq k$ we have
\begin{equation}
\begin{split}
    &[h_{\alpha_i,r+1},x_{\alpha_{i+1},s}^+]-[h_{\alpha_i,r},x_{\alpha_{i+1},s+1}^+] = [[x_{\alpha_{i},r+1}^+,x_{\alpha_i,0}^-],x_{\alpha_{i},s}^+]-\\
    &-[[x_{\alpha_{i},r}^+,x_{\alpha_i,0}^-],x_{\alpha_{i+1},s+1}^+].
\end{split}
\end{equation}
Since $r+1\leq k$, we have
\begin{equation}
\begin{split}
&[h_{\alpha_i,r+1},x_{\alpha_{i+1},s}^-]-[h_{\alpha_i,r},x_{\alpha_{i+1},s+1}^-]= [[x_{\alpha_{i},r+1}^+,x_{\alpha_i,0}^-],x_{\alpha_{i},s}^-]-\\
&-[[x_{\alpha_{i},r}^+,x_{\alpha_i,0}^-],x_{\alpha_{i+1},s+1}^-].
\end{split}
\end{equation}
By using Lemma \ref{lem:2} statement \ref{lem:2st:4},
\begin{equation}
[h_{\alpha_i,r+1},x_{\alpha_{i+1},s}^-]-[h_{\alpha_i,r},x_{\alpha_{i+1},s+1}^-]= [x_{\alpha_i,r}^+,-a_{i,i+1}\frac{\hbar}{2}\{x_{\alpha_i,0}^-,x_{\alpha_{i+1},s}^-\}].
\end{equation}
Thus, by Lemma \ref{lem:2} we obtain the relation
\begin{equation}
[h_{\alpha_i,r+1},x_{\alpha_{i+1},s}^-]-[h_{\alpha_i,r},x_{\alpha_{i+1},s+1}^-]= -a_{i+1,i}\frac{\hbar}{2}\{h_{\alpha_i,r},x_{\alpha{i+1},s}^-\}.
\end{equation}
By the similar discussion to Lemma 1.4 in \cite{Levendorski} there exists $\tilde{h}_{\alpha_i,k}$ such that
\[
    \tilde{h}_{\alpha_i,k}=h_{\alpha_i,k} +C(h_{\alpha_i,t}) \quad \{0\leq t \leq k-1\},
\]
where $C(h_{\alpha_i,t})$ is polynom.
\[
[\tilde{h}_{\alpha_i,k},x_{\alpha_{i+1},1}^+]=a_{i,i+1}x_{\alpha_{i+1},k+1}^+, \quad [\tilde{h}_{\alpha_i,k},x_{\alpha_{i+1},1}^-]=-a_{i,i+1}x_{\alpha_{i+1},k+1}^-.
\]
By the assumption that $[x_{\alpha_i,r}^+,x_{\alpha_i,s}^-]=h_{\alpha_i,r+s}$ for $r+s\leq k$ we have
\[
[\tilde{h}_{\alpha_{i+1},1},h_{\alpha_i,s}]= [[\tilde{h}_{\alpha_{i+1},1},x_{\alpha_i,s}^+], x_{\alpha_i,0}^-]+ [x_{\alpha_i,s}^+,[\tilde{h}_{\alpha_{i+1},1}, x_{\alpha_i,0}^-]]=0.
\]
for all $s\leq k$. Thus, it is enough to prove that $[\tilde{h}_{\alpha_{i},k}, h_{\alpha_{i+1},1}]=0$ holds. We obtain by the definition of $\tilde{h}_{\alpha_{i+1},1}$
\begin{equation}
\begin{split}
&[\tilde{h}_{\alpha_{i},k},h_{\alpha_{i+1},1}] = [\tilde{h}_{\alpha_{i},k},[x_{\alpha_{i+1},1}^+,x_{\alpha_{i+1},0}^-]]= a_{i,i+1}[x_{\alpha_{i+1},k+1}^+,x_{\alpha_{i+1},0}^-] - \\
&-a_{i,i+1}[x_{\alpha_{i+1},1}^+,x_{\alpha_{i+1},k}^-].
\end{split}
\end{equation}
By Lemma \ref{lem:5} it is equal to zero.
We apply $ad(\tilde{h}_{\alpha_{i+1},1})$ to $[x_{\alpha_i,r}^+,x_{\alpha_i,k-r}^-]=h_{\alpha_i,k}$. We obtain the following relation by induction hypothesis
\begin{equation}
\label{rel:71}
    [\tilde{h}_{\alpha_{i+1},1},[x_{\alpha_i,r}^+,x_{\alpha_i,k-r}^-]] = [\tilde{h}_{\alpha_{i+1},1},h_{\alpha_i,k}].
\end{equation}
By Lemma \ref{lem:2} we can rewrite (\ref{rel:71}) as
\begin{equation}
a_{i,i+1}([x_{\alpha_i,r+1}^+,x_{\alpha_i,k-r}^-]- [x_{\alpha_i,r}^+,x_{\alpha_i,k-r+1}^-])= [\tilde{h}_{\alpha_{i+1},1},h_{\alpha_i,k}]=0.
\end{equation}
We proved that $[\tilde{h}_{\alpha_{i+1},1},h_{\alpha_i,k}]=0$.\\

Proof of statement \ref{lem:6:st:3}\\
By statement (2) $[h_{\alpha_i,r},x_{\alpha_i,0}^+]=[[x_{\alpha_i,r}^+,x_{\alpha_i,0}^-],x_{\alpha_i,0}^+]$. By statement \ref{lem:6:st:1} and the Jacobi identity, we have
\begin{equation}
\label{rel:73}
    [[x_{\alpha_i,r}^+,x_{\alpha_i,0}^-],x_{\alpha_i,0}^+]=[x_{\alpha_i,r}^+,[x_{\alpha_i,0}^-,x_{\alpha_i,0}^+]].
\end{equation}
The right-hand side of (\ref{rel:73}), by the Lemma \ref{lem:1}, is equal to zero, since root is odd and $[x_{\alpha_i,r}^+,[x_{\alpha_i,0}^-,x_{\alpha_i,0}^+]] = [x_{\alpha_i,r}^+,h_{\alpha_i,0}]$.\\

Proof of statement \ref{lem:6:st:4}\\
We prove it by induction on $s$. When $s=0$ it is similar to statement \ref{lem:6:st:3}. Suppose that $[h_{\alpha_i,r},x_{\alpha_i,s}^+]$ holds. We apply $ad(\tilde{h}_{\alpha_{i+1},1})$ to $[h_{\alpha_i,r},x_{\alpha_i,s}^+]=0$ we obtain
\begin{equation}
\label{rel:74}
    [\tilde{h}_{\alpha_{i+1},1},[h_{\alpha_i,r},x_{\alpha_i,s}^+]]=0.
\end{equation}
By the proof of statement \ref{lem:6:st:2} we obtain $[\tilde{h}_{\alpha_{i+1},1},h_{\alpha_i,r}]=0$. Thus the left hand side of (\ref{rel:74}) is $[{h}_{\alpha_{i},r},[\tilde{h}_{\alpha_{i+1},1},x_{\alpha_i,s}^+]]$.  By Lemma  \ref{lem:2} statement (4) we obtain
\begin{equation}
\label{rel:75}
    [{h}_{\alpha_{i},r},[\tilde{h}_{\alpha_{i+1},1},x_{\alpha_i,s}^+]]=a_{i,i+1}[h_{\alpha_i,r},x_{\alpha_i,s+1}^+].
\end{equation}
By induction hypothesis, the right-hand side of is equal to $a_{i,i+1}[h_{\alpha_i,r},x_{\alpha_i,s+1}^+]$. Since $a_{i,i+1}\neq 0$ we obtain $[h_{\alpha_i,r},x_{\alpha_i,s+1}^+]=0$. For $"-"$ case proof is similar.\\

Proof of statement \ref{lem:6:st:5}\\
By statement \ref{lem:6:st:2} $[h_{\alpha_i,r},h_{\alpha_i,s}]=[h_{\alpha_i,r},[x_{\alpha_i,s}^+,x_{\alpha_i,0}^-]$. By the Jacobi identity, we have
\begin{equation}
\label{rel:76}
[h_{\alpha_i,r},[x_{\alpha_i,s}^+,x_{\alpha_i,0}^-]]= [[h_{\alpha_i,r},x_{\alpha_i,s}^+], x_{\alpha_i,0}^-]+ [x_{\alpha_i,s}^+,[h_{\alpha_i,r},x_{\alpha_i,0}^-]].
\end{equation}

By statement \ref{lem:6:st:4} right-hand side of (\ref{rel:76}) is equal to zero. Thus we have shown that $[h_{\alpha_i,r},h_{\alpha_i,s}]=0$.\\
We obtain the relation (\ref{rel:31}) by Lemma \ref{lem:2} statement \ref{lem:2st:2}, Lemma  \ref{lem:5} statement \ref{lem:5:st:3} and Lemma \ref{lem:6} statement \ref{lem:6:st:1}. Relation (\ref{rel:28}) holds by Lemma \ref{lem:2} statement \ref{lem:2st:2}, Lemma \ref{lem:5} statement \ref{lem:5:st:2} and Lemma \ref{lem:6} statement \ref{lem:6:st:2}. In the same way as Theorem 1.2 in \cite{Levendorski} we obtain defining relations (\ref{rel:30}), (\ref{rel:27}) and (\ref{rel:32}). Thus, we finished the proof.

\begin{lemma} \label{lem:7}
\begin{enumerate}
\item The relations (\ref{rel:30}), (\ref{rel:27}) hold in $Y_{\hbar}^D(\hat{sl}(m|n,\Pi))$ when $\alpha_i\neq \alpha_j$. \label{lem:7:st:1}
\item The relation (\ref{rel:32}) holds for every roots $\alpha_i, \alpha_j$ in $Y_{\hbar}^D(\hat{sl}(m|n,\Pi))$. \label{lem:7:st:2}
\end{enumerate}
\end{lemma}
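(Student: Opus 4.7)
The plan is to follow the inductive strategy of Theorem 1.2 in \cite{Levendorski}, adapted to the super setting, with both parities handled by systematically invoking the base cases already established in Lemma \ref{lem:2} and the ``same root'' cases from Lemmas \ref{lem:5} and \ref{lem:6}. The key mechanism is the recursive formula (\ref{rel:46}), which expresses $x^\pm_{\alpha_j,s+1}$ as $\pm a_{j,j+1}^{-1}[\tilde{h}_{\alpha_{j+1},1}, x^\pm_{\alpha_j,s}]$, allowing one to raise one index at a time by bracketing with $\tilde{h}_{\alpha_{j+1},1}$ and then exchanging the order of nested brackets by the Jacobi identity.

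First I would establish relation (\ref{rel:30}) for $i \neq j$ by induction on $k = r+s$. The base case $(r,s) = (1,0)$ is statement \ref{lem:2st:5} of Lemma \ref{lem:2}. Assuming the relation for all $r+s \leq k$, I would write
\[
[h_{\alpha_i,r+1}, x^\pm_{\alpha_j,s+1}] = \pm a_{j,j+1}^{-1}[h_{\alpha_i,r+1}, [\tilde{h}_{\alpha_{j+1},1}, x^\pm_{\alpha_j,s}]],
\]
expand by Jacobi, substitute the induction hypothesis, and use $[h_{\alpha_i,r+1}, \tilde{h}_{\alpha_{j+1},1}] = 0$ (from Lemma \ref{lem:6} statement \ref{lem:6:st:5} when relevant, or its analogue for even roots) to reduce to a combination of previously validated brackets. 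A parallel computation with $[h_{\alpha_i,r}, x^\pm_{\alpha_j,s+2}]$ then yields the required identity.

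Next I would deduce (\ref{rel:27}) for $i \neq j$. Using $h_{\alpha_j,s} = [x^+_{\alpha_j,s}, x^-_{\alpha_j,0}]$ (which is the $i=j$ case of (\ref{rel:29}), already secured by Lemma \ref{lem:5} statement \ref{lem:5:st:2} and Lemma \ref{lem:6} statement \ref{lem:6:st:2}), I would apply the Jacobi identity
\[
[h_{\alpha_i,r}, h_{\alpha_j,s}] = [[h_{\alpha_i,r}, x^+_{\alpha_j,s}], x^-_{\alpha_j,0}] + (-1)^{\bar{\alpha_i}\bar{\alpha_j}}[x^+_{\alpha_j,s}, [h_{\alpha_i,r}, x^-_{\alpha_j,0}]],
\]
and process each term by step 1 above, collecting brackets of the form $[x^+_{\alpha_j,\bullet}, x^-_{\alpha_j,\bullet}]$ and then applying (\ref{rel:30}) telescopically to see that everything collapses to zero.

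Finally, for the Serre relation (\ref{rel:32}), I would again induct on $\sum r_t + s$. The starting cases $r_1 \in \{0,1,2\}$, $r_2 = 0$, $s \in \mathbb{Z}_{\geq 0}$ are exactly statement \ref{lem:2st:7} of Lemma \ref{lem:2}. For the inductive step, I would apply ad$(\tilde{h}_{\alpha_k,1})$ to a validated Serre element; using (\ref{rel:30}) each of the iterated brackets acquires an increment of the appropriate index, and combining two independent choices of $k$ (say $k$ neighboring $\alpha_i$ and $k$ neighboring $\alpha_j$) yields a system whose coefficient matrix is nondegenerate, forcing each individual Serre element with the raised indices to vanish, in the spirit of equations (\ref{rel:59})--(\ref{rel:61}). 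The main obstacle will be the Serre case when one or both of $\alpha_i, \alpha_j$ is odd: the antisupercommutator signs in the $S_n$-symmetrized sum must be tracked with care, and one must verify that the nondegenerate coefficient matrix used to separate the two lifts retains its nondegeneracy after the parity-induced sign adjustments. Assuming this linear-algebra step goes through, the remaining verifications are routine and parallel those already carried out in Lemmas \ref{lem:5} and \ref{lem:6}.
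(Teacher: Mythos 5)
Your proposal follows essentially the same route as the paper, which disposes of Lemma \ref{lem:7} in a single sentence by deferring to the inductive argument of Theorem 1.2 in \cite{Levendorski} (raising indices via $\mathrm{ad}(\tilde{h}_{\alpha_{j+1},1})$ from the base cases of Lemma \ref{lem:2}, and separating the two index-lifts in the Serre relation by a nondegenerate linear system, exactly as in (\ref{rel:59})--(\ref{rel:61}) and Lemma \ref{lem:8}). Your write-up in fact supplies more detail than the paper does, and correctly flags the only genuinely delicate points -- the ordering of the induction so that the needed Cartan commutators $[\tilde{h}_{\alpha_{j+1},1},h_{\alpha_i,r}]=0$ are available before they are used, and the parity signs in the symmetrized Serre sum -- which the paper leaves entirely to the cited reference.
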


Relation (\ref{rel:27}) holds by Lemma (\ref{lem:5}) statement \ref{lem:5:st:1}, Lemma (\ref{lem:6}) statement \ref{lem:6:st:5} and Lemma (\ref{lem:7}) statement \ref{lem:7:st:1}. Relation (\ref{rel:30}) holds by Lemma (\ref{lem:5}) statement \ref{lem:5:st:4}, Lemma (\ref{lem:6}) statement \ref{lem:6:st:4} and Lemma (\ref{lem:7}) statement \ref{lem:7:st:1}.\\
We have (\ref{rel:33}), since (\ref{rel:33}) is equivalent to (\ref{rel:31}) when root is odd. Thus, we need to show that relation (\ref{rel:34}) holds.

\begin{lemma}
\label{lem:8}
The relation (\ref{rel:34}) holds for odd roots in $Y_{\hbar}^D(\widehat{sl}(m|n,\Pi))$.\\
\end{lemma}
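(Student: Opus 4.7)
The plan is to prove the identity
$[[x_{\alpha_{i-1},r}^{\pm}, x_{\alpha_i,0}^{\pm}], [x_{\alpha_i,0}^{\pm}, x_{\alpha_{i+1},s}^{\pm}]] = 0$
for all $r, s \geq 0$ by induction on $r+s$, taking as the base case $(r,s) = (0,0)$ the minimalistic Serre-type relation (\ref{rel:43}), which is available in $Y_{\hbar}^D(\widehat{sl}(m|n,\Pi))$ by construction. The inductive engine is the degree-raising identity $[\tilde{h}_{\alpha_k,1}, x_{\alpha_j,t}^{\pm}] = \pm a_{k,j}\, x_{\alpha_j,t+1}^{\pm}$ of Lemma \ref{lem:1}: it lets me raise the index of a single root generator in a controlled way, provided I can locate an auxiliary index $k$ whose Cartan pairings isolate exactly the factor whose degree I want to increase.

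First I would observe that under the standing assumption $m,n \geq 2$, $m \neq n$ of Theorem \ref{thm_3.2}, the affine Dynkin diagram of $\widehat{sl}(m|n)$ is a cycle of length $m+n \geq 5$. Consequently the vertices $\alpha_{i-2}$ and $\alpha_{i+2}$ (indices mod $m+n$) are distinct, neither is adjacent to $\alpha_i$, and moreover $a_{i+2,i-1} = a_{i+2,i} = 0$ while $a_{i+2,i+1} \neq 0$, with a symmetric statement for $\alpha_{i-2}$. Applying $\operatorname{ad}(\tilde{h}_{\alpha_{i+2},1})$ to the inductive hypothesis at $(r,s)$, three of the four Leibniz terms are killed by these vanishing pairings, and one is left with
$$\pm a_{i+2,i+1}\,[[x_{\alpha_{i-1},r}^{\pm}, x_{\alpha_i,0}^{\pm}], [x_{\alpha_i,0}^{\pm}, x_{\alpha_{i+1},s+1}^{\pm}]] = 0,$$
whence the $(r,s+1)$ case follows after cancelling the nonzero scalar. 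The same manoeuvre with $\operatorname{ad}(\tilde{h}_{\alpha_{i-2},1})$ promotes $(r,s)$ to $(r+1,s)$. Combining these two one-step upgrades exhausts all of $\mathbb{Z}_{\geq 0}^2$, and the argument for the lower sign is word-for-word identical.

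The main thing to check carefully is the existence of the isolating elements $\tilde{h}_{\alpha_{i\pm 2},1}$ with the desired vanishing pattern of Cartan pairings. This is precisely where the hypothesis $m+n \geq 5$ is essential: in a shorter affine cycle, $\alpha_{i+2}$ would either coincide with $\alpha_{i-1}$ or be adjacent to it, producing extra Leibniz terms which the present induction would not be able to cancel. Since Theorem \ref{thm_3.2} explicitly excludes this regime, no separate argument is required, and the lemma follows.
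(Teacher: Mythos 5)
Your proof is correct, and it follows the same overall skeleton as the paper's argument --- induction on $r+s$ starting from the minimalistic relation (\ref{rel:43}), with the inductive step driven by applying $\operatorname{ad}(\tilde{h}_{\bullet,1})$ and reading off Cartan pairings via Lemma \ref{lem:1} --- but the linear-algebra step is organized differently, and more cleanly. The paper applies three operators $\operatorname{ad}(\tilde{h}_{\alpha_m,1})$, $\operatorname{ad}(\tilde{h}_{\alpha_n,1})$, $\operatorname{ad}(\tilde{h}_{\alpha_k,1})$ to $X^{\pm}(r,0,0,s)$, obtains a $3\times 3$ linear system in $X^{\pm}(r+1,0,0,s)$, $X^{\pm}(r,1,0,s)+X^{\pm}(r,0,1,s)$, $X^{\pm}(r,0,0,s+1)$ whose coefficient matrix is a block of the Cartan matrix, and then runs a case analysis on the parities of $\alpha_{j\pm 1}$ to choose rows making the determinant nonzero. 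You instead pick the single row $\alpha_{i+2}$ (resp.\ $\alpha_{i-2}$), which in the affine cycle of length $m+n\ge 5$ is orthogonal to $\alpha_{i-1}$ and $\alpha_i$ and pairs nontrivially only with $\alpha_{i+1}$ (resp.\ $\alpha_{i-1}$), so each Leibniz expansion collapses to the single term you want, with a nonzero scalar. This buys two things: the spurious terms $X^{\pm}(r,1,0,s)$, $X^{\pm}(r,0,1,s)$ with raised middle degree are never generated, so the induction stays entirely within the family appearing in (\ref{rel:34}); and the parity case analysis (which in the paper is somewhat garbled --- the two lists of cases are inconsistent and the coefficient of $X^{\pm}(r,0,1,s)$ is missing) disappears, since your choice of isolating index works uniformly. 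Both arguments use the hypothesis $m,n\ge 2$, $m\ne n$ in the same essential way, namely to guarantee that the five consecutive vertices $\alpha_{i-2},\dots,\alpha_{i+2}$ of the affine cycle are distinct with the required adjacency pattern.
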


We prove this relation in a similar way, as in \cite{Mazurenko}:
Let $X^{\pm}(r,0,0,s)$ be the left-hand side of (\ref{rel:34}). We prove this relation by induction on $r$ and $s$ $\in \mathbb{Z}_{+}$. The initial case then $(r,0,0,s)=(0,0,0,0)$ is our initial assumption. Applying $ad(\tilde{h_{\alpha_{m},1}})$, $ad(\tilde{h_{\alpha_{n},1}})$ and $ad(\tilde{h_{\alpha_{k},1}})$ to $X^{\pm}(r,0,0,s)$ by relation (\ref{rel:40}) we have
\[
 a_{m,j-1}X^{\pm}(r+1,0,0,s)+a_{m,j}X^{\pm}(r,1,0,s)+X^{\pm}(r,0,1,s)+a_{m,j+1}X^{\pm}(r,0,0,s+1)=0,\\
\]
\[
 a_{n,j-1}X^{\pm}(r+1,0,0,s)+a_{n,j}X^{\pm}(r,1,0,s)+X^{\pm}(r,0,1,s)+a_{n,j+1}X^{\pm}(r,0,0,s+1)=0,\\
\]
\[
 a_{k,j-1}X^{\pm}(r+1,0,0,s)+a_{k,j}X^{\pm}(r,1,0,s)+X^{\pm}(r,0,1,s)+a_{k,j+1}X^{\pm}(r,0,0,s+1)=0.\\
\]
Since we have $A^{(1)}(m|n)$ type of algebra with $m,n\geq 2$ we have $\alpha_{j-1}\neq \alpha_{j+1}$.
Consider the Cartan matrix block
\[
\hat{A}=
\begin{pmatrix}
  a_{m,j-1} & a_{m,j} & a_{m,j+1}\\
  a_{n,j-1} & a_{n,j} & a_{n,j+1}\\
  a_{k,j-1} & a_{k,j} & a_{k,j+1}\\
\end{pmatrix}.
\]
In order to determine when the determinant of $\hat{A}$ is non-zero, consider the following Dynkin diagrams(in other cases determinant of $\hat{A}$ will be zero):
\begin{enumerate}
    \item $\alpha_{j-1},\alpha_{j+1}$ are odd $\Rightarrow$ $m=j-2$, $n=j$, $k=j+1$,
    \item $\alpha_{j-1}$ is even $\alpha_{j+1}$ is odd $\Rightarrow$ $m=j-1$, $n=j$, $k=j+1$,
    \item $\alpha_{j-1}$ is odd $\alpha_{j+1}$ is even $\Rightarrow$ $m=j-1$, $n=j$, $k=j+1$,
    \item $\alpha_{j-1}$, $\alpha_{j+1}$ are even $\Rightarrow$ $m=j-2$, $n=j$, $k=j+1$.
\end{enumerate}
\begin{enumerate}
    \item $\alpha_{j-1},\alpha_{j+1}$ are odd $\Rightarrow$ $m=j+2$, $n=j$, $k=j+1$,
    \item $\alpha_{j-1}$ is even $\alpha_{j+1}$ is odd $\Rightarrow$ $m=j-1$, $n=j$, $k=j+1$,
    \item $\alpha_{j-1}$ is odd $\alpha_{j+1}$ is even $\Rightarrow$ $m=j-1$, $n=j$, $k=j+1$,
    \item $\alpha_{j-1}$, $\alpha_{j+1}$ are even $\Rightarrow$ $m=j+2$, $n=j$, $k=j+1$.
\end{enumerate}
When the determinant is nonzero we have $X^{\pm}(r+1,0,0,s)=X^{\pm}(r,0,0,s+1)=0$. The result follows by induction hypothesis.
Thus, we proved the lemma.

Thus, we have shown that (\ref{rel:34}) holds. We obtained relations (\ref{rel:33}) and (\ref{rel:34}) are deduced from relations (\ref{rel:35})-(\ref{rel:43}). This completes the proof of Theorem \ref{thm_3.2}.\\

\vspace{0.5cm}

\subsection{Coproduct for the Drinfeld affine super Yangian. Proof of the theorem  \ref{thm:copr3}}

In this subsection, we use the isomorphism between the super Yangian $Y_{\hbar}(\widehat{sl}(m|n, \Pi))$ and the Drinfeld super Yangian $Y^D_{\hbar}(\widehat{sl}(m|n, \Pi))$ to transfer the coproduct for the affine super Yangian to the Drinfeld presentation $Y^D_{\hbar}(\widehat{sl}(m|n, \Pi))$. We will use the following relations between generators of the affine super Yangian and its Drinfeld presentation. We define new generators in the affine super Yangian by induction. $h_{i,k}, x^{\pm}_{i,k}$, $i \in I$ (or $\alpha_i \in \Pi$) $k \in \mathbb{Z}_+$.
\begin{eqnarray}
&x^{\pm}_{i, k+1} = \pm (\alpha_i, \alpha_{i+1})^{-1}[\tilde{h}_{i+1}, x^{\pm}_{i, k}], \quad \\
& h_{i, k+1} = [x^+_{i,k}, x^-_{i,0}]. \quad
\end{eqnarray}
Thus we define coproduct on this generators by induction

\begin{eqnarray}
&\Delta(x^{\pm}_{i, k+1}) = \pm (\alpha_i, \alpha_{i+1})^{-1}[\Delta(\tilde{h}_{i+1, 1}), \Delta( x^{\pm}_{i, k})], \quad \\
&\Delta( h_{i, k+1}) = [\Delta(x^+_{i,k}), \Delta(x^-_{i,0})]. \quad
\end{eqnarray}

The above-defined generators correspond to the generators of the Drinfeld presentation, thus establishing that these formulas define a coproduct on the Drinfeld super Yangian.

First, we will use the definition of the standard degree-wise completion of a graded algebra. We will use the definition of the super Yangian as a flat deformation of the current Lie superbialgebra, as well as the theorem formulated above concerning the isomorphism between $Y_{\hbar}(\widehat{sl}(m|n, \Pi))$ and $Y^D_{\hbar}(\widehat{sl}(m|n, \Pi))$.  \\

\begin{theorem} \label{thm:4.1}
The superalgebra homomorphism
$$\Delta:  Y^D_{\hbar}(\widehat{sl}(m|n, \Pi))\rightarrow \widehat{Y^D_{\hbar}(\widehat{sl}(m|n, \Pi))^{\otimes 2}}$$
uniquely determined by the following formulas:
\begin{equation}
\Delta(h_{i,0}) = h_{i,0} \otimes 1 + 1\otimes h_{i,0}, \quad \Delta(x^{\pm}_{i,0}) = x^{\pm}_{i,0} \otimes 1 + 1\otimes x^{\pm}_{i,0},
\end{equation}
\begin{equation}
\Delta(h_{i,1}) = h_{i,1} \otimes 1 + 1\otimes h_{i,1} + \hbar h_{i,0}\otimes h_{i,0} -\hbar \sum_{\alpha \in \Delta_+}\sum_{1\leq k_{\alpha}\leq \dim(\mathfrak{g}_{\alpha})} (\alpha, \alpha_i) x_{-\alpha}^{k_{\alpha}} \otimes x_{\alpha}^{k_{\alpha}}.
\end{equation}
Moreover, $\Delta$ satisfies the coassociativity condition.
\end{theorem}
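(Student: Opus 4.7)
\subsection*{Proof proposal for Theorem \ref{thm:4.1}}

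The plan is to transport the Hopf-superalgebra structure already constructed on the minimalistic presentation $Y_{\hbar}(\widehat{sl}(m|n,\Pi))$ to the Drinfeld presentation $Y^D_{\hbar}(\widehat{sl}(m|n,\Pi))$ via the isomorphism established in Theorem \ref{thm_3.2}. Denote this isomorphism by $\Phi \colon Y^D_{\hbar}(\widehat{sl}(m|n,\Pi)) \to Y_{\hbar}(\widehat{sl}(m|n,\Pi))$, and set $\Delta := (\Phi^{-1}\otimes \Phi^{-1})\circ \Delta_{\min}\circ \Phi$, where $\Delta_{\min}$ is the coproduct defined in Section 3 on the minimalistic generators. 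The superalgebra homomorphism property and coassociativity of $\Delta$ are then immediate from the corresponding properties of $\Delta_{\min}$, provided both maps land in the degree-wise completed tensor square (which is the natural target because $\Omega_+$ itself is defined only in this completion).

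The first concrete step is to verify the stated formulas on the low-degree generators. For $h_{i,0}$ and $x^{\pm}_{i,0}$ this is just primitivity, which holds by construction. For $h_{i,1}$, I would expand $\hbar[h_{i,0}\otimes 1,\Omega_+]$ using the explicit form of $\Omega_+$ given in (\ref{eq:Cas2}): the Cartan part $\sum h^{(k)}\otimes h_{(k)}$ is killed by $\mathrm{ad}(h_{i,0})\otimes\mathrm{id}$, while the root part yields $-\sum_{\alpha\in\Delta_+}\sum_{k_\alpha}(\alpha,\alpha_i)\,x^{k_\alpha}_{-\alpha}\otimes x^{k_\alpha}_{\alpha}$. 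The remaining Cartan cross-term $\hbar\, h_{i,0}\otimes h_{i,0}$ arises from the identification between the Drinfeld $h_{i,1}$ and the minimalistic $h_{i,1}$ through the shift $\tilde h_{i,1}=h_{i,1}-\tfrac{\hbar}{2}h_{i,0}^{2}$, together with the formula $\Delta(h_{i,0}^2) = h_{i,0}^2\otimes 1 + 1\otimes h_{i,0}^2 + 2 h_{i,0}\otimes h_{i,0}$; tracking the corresponding quadratic correction under $\Phi$ produces exactly the extra $\hbar\, h_{i,0}\otimes h_{i,0}$ summand displayed in the theorem.

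Next I would extend $\Delta$ to the remaining Drinfeld generators $x^{\pm}_{i,k}$ and $h_{i,k}$ inductively, using the recursions
\begin{equation*}
x^{\pm}_{i,k+1} = \pm(\alpha_i,\alpha_{i+1})^{-1}[\tilde h_{i+1,1},x^{\pm}_{i,k}],\qquad
h_{i,k+1} = [x^{+}_{i,k},x^{-}_{i,0}],
\end{equation*}
and the prescriptions
\begin{equation*}
\Delta(x^{\pm}_{i,k+1}) = \pm(\alpha_i,\alpha_{i+1})^{-1}[\Delta(\tilde h_{i+1,1}),\Delta(x^{\pm}_{i,k})],\qquad
\Delta(h_{i,k+1}) = [\Delta(x^{+}_{i,k}),\Delta(x^{-}_{i,0})].
\end{equation*}
Since $\Phi$ identifies these higher generators with the corresponding elements in $Y_{\hbar}(\widehat{sl}(m|n,\Pi))$ obtained by the same recursion, the inductive definition of $\Delta$ automatically agrees with the transported coproduct, so well-definedness on all the Drinfeld relations (\ref{rel:27})--(\ref{rel:34}) follows from well-definedness of $\Delta_{\min}$ on the minimalistic relations together with Theorem \ref{thm_3.2}. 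Uniqueness of the extension is built into the recursion, because the images of the degree-zero generators and of $\tilde h_{i,1}$ determine every other Drinfeld generator.

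The step I expect to require the most care is ensuring convergence in the degree-wise completion at each inductive step. Each bracket $[\Delta(\tilde h_{i+1,1}),\Delta(x^{\pm}_{i,k})]$ involves the infinite sum from $\Omega_+$, and one must check that after taking the supercommutator only finitely many tensor summands contribute in each fixed total degree; this is standard once one fixes the grading with $\deg(\hbar)=1$ and uses that $\Omega_+$ has components of strictly positive degree outside a finite initial range. Coassociativity of the extended $\Delta$ then follows formally, because coassociativity of $\Delta_{\min}$ holds on the minimalistic generators and both recursions respect it.
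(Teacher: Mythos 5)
Your proposal is correct and follows essentially the same route as the paper: the coproduct is transported from the minimalistic presentation through the isomorphism of Theorem \ref{thm_3.2} and then extended to the higher Drinfeld generators by the inductive recursions $\Delta(x^{\pm}_{i,k+1}) = \pm(\alpha_i,\alpha_{i+1})^{-1}[\Delta(\tilde h_{i+1,1}),\Delta(x^{\pm}_{i,k})]$ and $\Delta(h_{i,k+1}) = [\Delta(x^{+}_{i,k}),\Delta(x^{-}_{i,0})]$, with the target taken in the degree-wise completion. Your explicit expansion of $\hbar[h_{i,0}\otimes 1,\Omega_+]$ and your accounting for the $\hbar\,h_{i,0}\otimes h_{i,0}$ term via the shift $\tilde h_{i,1}=h_{i,1}-\tfrac{\hbar}{2}h_{i,0}^{2}$ correctly supply details that the paper leaves implicit.
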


\vspace{0.5cm}





This work is performed at the Center of Pure Mathematics, Moscow Institute of Physics and Technology.



\begin{thebibliography}{00}
\bibitem{Dr} V.G. Drinfeld, Quantum groups, J. Math. Sci. 41 (1988) 898-915.
\bibitem{BT} M. Bershtein and A. Tsymbaliuk, Homomorphisms between different quantum toroidal and affine Yangian algebras, Journal of Pure and Applied Algebra (2018).
\bibitem{Levendorski}
S.I. Boyarchenko and S.Z. Levendorskii,  On affine Yangians.  Lett. Math. Phys., V. 32 (1993), no 4, 2691--274
\bibitem{Guaywork1}
N. Guay, Affine Yangians and deformed double current algebras in type A. Adv. Math., 211(2):436--484, 2007.
\bibitem{Guaywork}
H. Nakajima, N. Guay and C. Wendlandt, Coproduct for Yangians of affine Kac-Moody algebras on generators and
defining relations of Yangians. Adv. Math., 338, 865 -- 911, 2018.
\bibitem{Kac1} V. G. Kac, Infinite-dimensional Lie algebras, third ed., Cambridge University Press, Cambridge,
1990.
\bibitem{Musson}
I. M. Musson, Lie superalgebras and enveloping algebras. Volume 131 of Graduate Studies in Mathematics., 2012.
\bibitem{Drinfeld}
V. G. Drinfeld, A new realization of Yangians and of quantum affine algebras. Dokl. Akad. Nauk SSSR, 296(1):,
pages 13--17, 1987
\bibitem{Lev} S.Z. Levendorskii, On generators and defining relations of Yangians. J. Geom. Phys. 12 (1993), no. 1, 1 -- 11.
\bibitem{SuSy}
C. Peng M. R. Gaberdiel, W. Li and H. Zhang, The supersymmetric affine Yangian,. 2018.
\bibitem{Ueda}
Mamoru Ueda, Construction of affine super Yangian. arXiv:1911.06666, 2021.
\bibitem{Kodera}
Ryosuke Kodera, Braid group action on affine Yangian. arXiv:1805.01621, 2019.
\bibitem{Mazurenko}
Vladimir A. Stukopin, Alexander Mazurenko, Classification of Hopf superalgebra structures on Drinfeld super
Yangians. arxiv:2210.08365, 2022.
\bibitem{Peng}
Y. N. Peng, On shifted super Yangians and a class of finite W-superalgebras. J. Algebra, 422, p. 520 -- 562, 2015.
\bibitem{PS}
Elena Poletaeva, Vera Serganova
 Representations of principal W-algebra for the superalgebra $Q(n)$ and the super Yangian $Y(Q(1))$. arXiv:1903.05272, 2019.
\bibitem{RS}
E. Ragoucy and P. Sorba, Yangian realisations from finite W-algebras. Comm. Math. Phys., 203(3), p. 551 -- 572, 1999.
\bibitem{St} V. Stukopin, Yangians of Lie superalgebras of type $A(m, n)$, (Russian) Funktsional. Anal. i Prilozhen. 28 (1994), no 3, 85 -- 88; translation in Funct. Anal. Appl. 28 (1994), no. 3, 217 -- 219.
\bibitem{GT16} S. Gautam, V. Toledano-Laredo, Yangians, quantum loop algebras and abelian difference equations, Journal of the American Mathematical Society, 29 (2016), no 1, 775--824.
 \bibitem{S20} V. Stukopin, Relation between categories of representations of the super-Yangian of a special linear Lie superalgebra and quantum loop superalgebra, Theoret. and Math. Phys., 204(2020), no 3, 1227 -- 1243.
\bibitem{StV}      V. D. Volkov, V. A. Stukopin, Affine super-Yangian and a quantum Weyl groupoid, Theoret. and Math. Phys., 216:3 (2023), 1313–1325.
\bibitem{GMT} D. Galakhov, A. Morozov, N. Tselousov, Quiver Yangian and supersymmetric quantum mechanics, arxiv: 2402.05920v2 [hep-th].
\bibitem{MO} Maulik D., Okounkov A., Quantum groups and quantum cohomology, arxiv: 1211.1287v2 [math. AG].
\bibitem{GY} D. Galakhov, M. Yamazaki, Quiver Yangian and supersymmetric quantum mechanics, Commun. Math. Phys.,  396(2022), no 2, 713 -- 785.
\bibitem{ShV} O. Shiffmann, E. Vasserot , Cohomological Hall algebras of quivers and Yangians, arxiv: 2312.15803v1 [math. RT].
\bibitem{ShV1} O. Shiffmann, E. Vasserot , On cohomological Hall algebras of quivers: Yangians, arxiv: 1705.07491v1 [math. RT].


\end{thebibliography}
\end{document}